\newtheorem{thm}{}[section]
\newtheorem{theorem}[thm]{Theorem}
\newtheorem{lemma}[thm]{Lemma}
\newtheorem{proposition}[thm]{Proposition}
\theoremstyle{definition}
\theoremstyle{remark}
\newtheorem{remark}[thm]{Remark}
\newtheorem{question}[thm]{Question}
\numberwithin{equation}{section}
\newcommand{\leqnos}{\tagsleft@true\let\veqno\@@leqno}
\newcommand{\reqnos}{\tagsleft@false\let\veqno\@@eqno}
\newcommand{\HD}{\ensuremath{H_d}}
\newcommand{\udf}{\ensuremath{\bm{\varphi_u}}}
\newcommand{\ldf}{\ensuremath{\bm{\varphi_l}}}
\newcommand{\ff}{\ensuremath{\bm{\varphi}}}
\newcommand{\primt}{\ensuremath{\bm{\tau}}}
\newcommand{\prim}{\ensuremath{\bm{\sigma}}}
\newcommand{\GG}{\ensuremath{\mathcal{G}}}
\newcommand{\leb}{\ensuremath{\bm{L}}}
\newcommand{\dom}{\ensuremath{\bm{\delta}}}
\newcommand{\ls}{\ensuremath{d^{\,0}}}
\newcommand{\XB}{\ensuremath{\mathcal{X}}}
\newcommand{\FF}{\ensuremath{\mathbb{F}}}
\newcommand{\XX}{\ensuremath{\mathbb{X}}}
\newcommand{\YY}{\ensuremath{\mathbb{Y}}}
\newcommand{\HH}{\ensuremath{\mathbb{H}}}
\newcommand{\Haar}{\ensuremath{\mathcal{H}}}
\newcommand{\ee}{\ensuremath{\bm{e}}}
\newcommand{\xx}{\ensuremath{\bm{x}}}
\newcommand{\uu}{\ensuremath{\bm{u}}}
\newcommand{\vv}{\ensuremath{\bm{v}}}
\newcommand{\ww}{\ensuremath{\bm{w}}}
\newcommand{\unc}{\ensuremath{\bm{k}}}
\newcommand{\UVS}{\ensuremath{\mathcal{E}}}
\newcommand{\NN}{\ensuremath{\mathbb{N}}}
\newcommand{\Fou}{\ensuremath{\mathcal{F}}}
\newcommand{\BV}{\ensuremath{\mathrm{BV}}}
\begin{document}

\title[Almost greedy bases in superreflexive Banach spaces]{Lorentz spaces and embeddings induced by almost greedy bases in superreflexive Banach spaces}

\author[J. L. Ansorena]{Jos\'e L. Ansorena}
\address{Department of Mathematics and Computer Sciences\\
Universidad de La Rioja\\
Logro\~no\\
26004 Spain}
\email{joseluis.ansorena@unirioja.es}

\author[G. Bello]{Glenier Bello}
\address{Institute of Mathematics of the Polish Academy of Sciences\\
00-656 Warszawa\\
ul. \'{S}niadeckich 8\\
Poland}
\email{gbello@impan.pl}

\author[P. Wojtaszczyk]{Przemys{\l}aw Wojtaszczyk}
\address{Institute of Mathematics of the Polish Academy of Sciences\\
00-656 Warszawa\\
ul.\ \'Sniadeckich 8\\
Poland}
\email{wojtaszczyk@impan.pl}

\subjclass[2010]{46B15, 46B10, 46A35, 46A25, 41A65}

\keywords{superreflexive Banach space, almost greedy basis, Lorentz space}

\begin{abstract}
The aim of this paper is to show that almost greedy bases induce tighter embeddings in superreflexive Banach spaces than in general Banach spaces. More specifically, we show that an almost greedy basis in a superreflexive Banach space $\XX$ induces embeddings that allow squeezing $\XX$ between two superreflexive Lorentz sequence spaces that are close to each other in the sense that they have the same fundamental function.
\end{abstract}

\thanks{J.~L. Ansorena acknowledges the support of the Spanish Ministry for Science, Innovation, and Universities under Grant PGC2018-095366-B-I00 for \emph{An\'alisis Vectorial, Multilineal y Aproximaci\'on}. G. Bello and P. Wojtaszczyk
were supported by National Science Centre, Poland grant UMO-2016/21/B/ST1/00241.}

\maketitle

\section{Introduction}\noindent
Let $\XX$ be an infinite-dimensional separable Banach space (or, more generally, a quasi-Banach space) over the real or complex field $\FF$. Throughout this paper, by a \emph{basis} of $\XX$ we mean a sequence $\XB=(\xx_n)_{n=1}^\infty$ such that
\begin{itemize}[leftmargin=*]
\item it generates the entire space, i.e., the closure of its linear span is $\XX$,
\item there is a (unique) sequence $\XB^*=(\xx_{n}^*)_{n=1}^\infty$ in the dual space $\XX^{\ast}$ such that $(\xx_{n}, \xx_{n}^{\ast})_{n=1}^{\infty}$ is a biorthogonal system, and
\item it is semi-normalized, i.e., $\inf_n \Vert \xx_n\Vert>0$ and $\sup_n \Vert \xx_n\Vert<\infty$.
\end{itemize}
We will refer to the sequence $\XB^*$, which is a basis of the space it generates in $\XX^*$, as to the \emph{dual basis} of $\XB$. The numbers $\xx_n^*(f)$, $n\in\NN$, are the coefficients of $f\in\XX$ with respect to the basis $\XB$, and the map $\Fou\colon\XX\to \FF^\NN$ given by
\[
\Fou(f)=(\xx_n^*(f))_{n=1}^\infty,
\]
will be called the \emph{coefficient transform}.

A \emph{sequence space} will be a quasi-Banach space $\YY\subseteq\FF^\NN$ which contains the unit vectors, and for which the coordinate functionals are bounded. We say that $\XX$ embeds in the sequence space $\YY$ via $\XB$, and we write
\[
\XX \stackrel{\XB}\hookrightarrow \YY
\]
if $\Fou(f)\in\YY$ for all $f\in\XX$. In the reverse direction, we say that $\YY$ embeds in $\XX$ via $\XB$, and we write
\[
\YY \stackrel{\XB}\hookrightarrow \XX
\]
if the series $\sum_{n=1}^\infty a_n\, \xx_n$ converges for every $(a_n)_{n=1}^\infty\in\YY$. By the closed graph theorem, $\XX$ embeds in $\YY$ via $\XB$ if and only if the coefficient transform is a bounded operator from $\XX$ into $\YY$. Similarly, $\YY$ embeds in $\XX$ via $\XB$ if and only if the mapping
\[
f=(a_n)_{n=1}^\infty \mapsto \sum_{n=1}^\infty a_n \, \xx_n
\]
defines a bounded operator from $\YY$ into $\XX$. Serving in some situations as a tool to derive properties of bases, embeddings via bases are present in greedy approximation from the early years of the theory:
\begin{itemize}[leftmargin=*]
\item In \cites{Woj2003,DKK2003,AADK2019b} embeddings via bases are used to prove that certain bases are quasi-greedy.
\item The authors of \cite{AADK2016} used embedding via bases within their proof that certain Banach spaces have a unique greedy basis.
\item In \cites{BBG2017,BBGHO2018} embeddings via bases are used to achieve estimates for certain constants related to the greedy algorithm.
\end{itemize}

A sequence space $(\YY, \Vert \cdot\Vert_\YY)$ is said to be \emph{symmetric} if the mapping
\[
(a_n)_{n=1}^\infty\mapsto (\varepsilon_n a_{\pi(n)})_{n=1}^\infty
\]
defines an isometry on $\YY$ for every permutation $\pi$ of $\NN$ and every sequence $(\varepsilon_n)_{n=1}^\infty$ consisting of scalars of modulus one. Suppose we sandwich the quasi-Banach space $\XX$ between two symmetric sequence spaces $\YY_1$ and $\YY_2$ via the basis $\XB$ of $\XX$ as follows:
\[
\YY_1\stackrel{\XB}\hookrightarrow \XX \stackrel{\XB}\hookrightarrow \YY_2.
\]
Then, roughly speaking, the closeness between $\YY_1$ and $\YY_2$ is a qualitative estimate of the symmetry of $\XB$. A premier to determine whether $\YY_1$ and $\YY_2$ are close is to check whether they share the fundamental function. Given a sequence, mainly a basis, $\XB=(\xx_n)_{n=1}^\infty$ of a quasi-Banach space, its \emph{upper super-democracy function} (also known as the \emph{fundamental function}) and its \emph{lower super-democracy function} are respectively defined by
\begin{align*}
\udf(m)&
=\udf[\XB,\XX](m)=\sup\left\lbrace \left\Vert \sum_{n\in A} \varepsilon_n\, \xx_n \right\Vert \colon |A|\le m,\, |\varepsilon_n|=1 \right\rbrace,\\
\ldf(m)
&=\ldf[\XB,\XX](m)=\inf\left\lbrace \left\Vert \sum_{n\in A} \varepsilon_n\, \xx_n \right\Vert \colon |A|\ge m,\, |\varepsilon_n|=1 \right\rbrace.
\end{align*}
We define the fundamental function $\ff[\YY]$ of a symmetric sequence space $(\YY,\Vert\cdot\Vert_\YY)$ as that of its unit vector system $\UVS=(\ee_n)_{n=1}^\infty$. Notice that
\[
\ff[\YY](m):= \udf[\UVS,\YY](m)=\ldf[\UVS,\YY](m)=\left\Vert \sum_{n=1}^m \ee_n\right\Vert_\YY, \quad m\in\NN.
\]

Squeezing a quasi-Banach space between two symmetric sequence spaces with equivalent fundamental functions guarantees in a certain sense the optimality of the compression algorithms with respect to the basis (see \cite{Donoho1993}). This principle has led the specialists to address the task of obtaining embeddings via bases for Banach spaces and bases of interest in Approximation Theory. The authors of \cite{CDVPX1999} proved that if a function belongs to $\BV([0,1]^d)$, $d\ge 2$, then its coefficient transform with respect to the $L_1$-normalized Haar system $\Haar$ belongs to $\ell_{1,\infty}$. In other words, $\BV([0,1]^d)$ is squeezed between $\ell_1$ and $\ell_{1,\infty}$ via $\Haar$. In \cite{Woj2000} it is proved that any quasi-greedy basis $\XB$ of a Hilbert space $\HH$ is democratic, and $\HH$ can be squeezed between the Lorentz sequence spaces $\ell_{2,1}$ and $\ell_{2,\infty}$ via $\XB$. Later on, the authors of \cite{AlbiacAnsorena2016} used weighted Lorentz spaces to extend this result to general Banach spaces. Let us introduce this kind of sequence spaces. Given $0<q\le \infty$ and a weight $\ww=(w_n)_{n=1}^\infty$ whose primitive sequence $\prim=(s_m)_{m=1}^\infty$ is doubling (i.e., $s_m=\sum_{n=1}^m w_n$ and $\sup_m s_{m}/s_{\lceil m/2\rceil}<\infty$), we set
\[
\Vert f\Vert_{1,q,\ww}:=\left( \sum_{n=1}^\infty ( a_n s_n)^q \frac{w_n}{s_n}\right)^{1/q}, \quad f\in c_{0},\, f^*=(a_n)_{n=1}^\infty,
\]
with the usual modification if $q=\infty$. Recall that for a sequence $f=(\alpha_n)_{n=1}^\infty$ by $f^*$  we mean a nonincreasing permutation of the sequence $(|\alpha_n|)_{n=1}^\infty$. Then, we define the \emph{Lorentz sequence space} $d_{1,q}(\ww)$ as the quasi-Banach space consisting of all $f\in c_0$ such that $\Vert f\Vert_{1,q,\ww}<\infty$.

It is known \cite{DKKT2003}*{Theorem 3.3} that a basis is almost greedy if and only if it is quasi-greedy and democratic. In turn, if $\XB$ is a quasi-greedy democratic basis of a Banach space $\XX$, and the primitive sequence of $\ww$ is equivalent to $\udf[\XB,\XX]$, then
\begin{equation}\label{eq:SSAG}
d_{1,1}(\ww) \stackrel{\XB}\hookrightarrow \XX \stackrel{\XB}\hookrightarrow d_{1,\infty}(\ww)
\end{equation}
(see \cite{AlbiacAnsorena2016}*{Theorem 3.1}). Both results have been generalized to the more general setting of quasi-Banach spaces. Namely, the aforementioned characterization of almost greedy bases still hold (see \cite{AABW2021}*{Theorem 6.3}), and if $\XB$ is a quasi-greedy democratic basis of a $p$-Banach space $\XX$, $0<p\le 1$, then $\XX$ is squeezed between $d_{1,p}(\ww)$ and $d_{1,\infty}(\ww)$ via $\XB$ (see \cite{AABW2021}*{Theorem 4.13 and Corollary 9.13}).

Regardless the index $q\in(0,\infty]$, the fundamental function of $d_{1,q}(\ww)$ is equivalent to the primitive sequence of $\ww$. Thus, \eqref{eq:SSAG} exhibits a symmetry-like property of almost greedy bases of Lorentz sequence spaces. Besides, for a fixed weight $\ww$, the spaces $d_{1,q}(\ww)$, $0<q\le \infty$, set up a scale of quasi-Banach spaces in the sense that
\begin{equation*}
d_{1,q}(\ww) \subseteq d_{1,r}(\ww), \quad 0<q<r\le \infty.
\end{equation*}
Thus, squeezing a Banach space $\XX$ via a given almost greedy basis $\XB$ as
\begin{equation}\label{eq:SSAGImproved}
d_{1,q}(\ww) \stackrel{\XB}\hookrightarrow \XX \stackrel{\XB}\hookrightarrow d_{1,r}(\ww)
\end{equation}
with $1\le q \le r \le \infty$ and either $q>1$ or $r<\infty$ would exhibit that the basis gains in symmetry and, then, the thresholding greedy algorithm (TGA for short) with respect to it performs better. In this regard, we bring up the connection between embeddings involving Lorentz sequence spaces and Lebesgue-type inequalities introduced by Temlyakov \cites{Temlyakov1998,Temlyakov1998b}. The TGA $(\GG_{m})_{m=1}^{\infty}$ with respect to the basis $\XB$ of $\XX$ is the sequence of (nonlinear) operators from $\XX$ into $\XX$ defined for each $f\in\XX$ and $m\in\NN$ by choosing the first $m$ terms in decreasing order or magnitude from the formal series expansion $\sum_{n=1}^\infty\xx_n^*(f)\, \xx_n$, with the agreement that when two terms are of equal size we take them in the basis order. Then, to measure the performance of the greedy algorithm we define the \emph{$m$th Lebesgue constant} $\leb_m=\leb_m[\XB,\XX]$ for the TGA of the basis $\XB$ as the smallest constant $C$ such that
\[
\Vert f-\GG_{m}(f)\Vert\le C \Vert f-g\Vert
\]
for every linear combination $g$ of at most $m$ vectors of the basis. If $\XB$ is almost greedy, then its Lebesgue parameters grow as its \emph{conditionality constants} $(\unc_m)_{m=1}^\infty$ used to quantify how far is it from being unconditional (see \cite{GHO2013}*{Remark 4}). Specifically, the $m$th \emph{conditionality constant} of the basis $\XB$ is the number
\[
\unc_m=\unc_m[\XB,\XX] :=\sup_{|A|\le m} \Vert S_A\Vert, \quad m\in\NN,
\]
where $S_A\colon\XX\to \XX$ is the \emph{coordinate projection} with respect to the basis $\XB$ on the finite set $A$. If $\XX$ is squeezed between symmetric sequence spaces $\YY_1$ and $\YY_2$ via $\XB$, then
\[
\leb_m[\XB,\XX]\lesssim \dom_m[\YY_1,\YY_2], \quad m\in\NN,
\]
where
$\dom_m[\YY_1, \YY_2]$ is the smallest constant $C$ such that 
\[
\left\Vert\sum_{n=1}^m a_n\, \ee_n\right\Vert_{\YY_2} \le C \left\Vert\sum_{n=1}^m a_n\, \ee_n\right\Vert_{\YY_1}, \quad a_1\ge\cdots \ge a_n\ge\cdots \ge a_m\ge 0
\]
(see \cite{AAB2021}*{Theorem 1.5 and Lemma 5.1}). A routine check gives that, if $0<q\le r\le \infty$ and $\ww$ is a weight whose primitive sequence $\prim=(s_n)_{n=1}^\infty$ is doubling,
\[
\dom_m[d_{1,q}(\ww), d_{1,r}(\ww)]=(H_m[\prim])^{1/q-1/r}, \quad m\in\NN,
\]
where $H_m[\prim]=\sum_{n=1}^m w_n/s_n$. Since $H_m[\prim]\lesssim \log m$ for $m\ge 2$ (see \cite{AAB2021}*{\S5}), if the basis $\XB$ satisfies \eqref{eq:SSAGImproved}, then it would satisfy the Lebesgue-type inequality
\[
\leb_m[\XB,\XX]\lesssim (\log m)^{1/q-1/r}, \quad m\ge 2.
\]
Taking into account that almost greedy bases of Banach spaces fulfil \eqref{eq:SSAG}, we infer that they satisfy the Lebesgue-type inequality $\leb_m\lesssim \log m$ for $m\ge 2$. This estimate is well-known (see \cite{GHO2013}*{Remark 4}). More remarkable is that squeezing $\XX$ as in \eqref{eq:SSAGImproved} with indices $q$ and $r$ closer to each other than $1$ and $\infty$ would lead to improving it. Namely, we would obtain
\begin{equation}\label{eq:LPImproved}
\leb_m\lesssim (\log m)^a, \quad m\ge 2.
\end{equation}
for some $0<a<1$. We point out that, since there are almost greedy bases of Banach spaces with $\leb_m\approx \log m$ (see \cite{GHO2013}*{\S6}), it is hopeless trying to obtain \eqref{eq:SSAGImproved} with $1\le q \le r \le \infty$ and either $q>1$ or $r<\infty$ for almost greedy bases of general Banach spaces. Still, it is reasonable to wonder whether the geometry of the space could help to obtain better embeddings. To reinforce this guess, we mention that quasi-greedy bases of Hilbert (are almost greedy and) satisfy \eqref{eq:LPImproved} for some $a<1$ (see \cite{GW2014}*{Theorem 1.1}). Later on, this result was generalized to superreflexive spaces \cite{AAGHR2015}. So, it is natural to address the task of improving \eqref{eq:SSAG} for almost greedy bases of superreflexive Banach spaces. This paper is conducted toward reaching this goal:

\begin{theorem}\label{thm:main}
Let $\XB$ be an almost greedy basis of a superreflexive Banach space $\XX$, and let $\ww=(w_n)_{n=1}^\infty$ be the weight defined by
\[
\sum_{n=1}^m w_n=\udf[\XB,\XX](m), \quad m\in\NN.
\]
Then, there are $1<q<r<\infty$ such that
$
d_{1,q}(\ww) \stackrel{\XB}\hookrightarrow \XX \stackrel{\XB}\hookrightarrow d_{1,r}(\ww).
$
\end{theorem}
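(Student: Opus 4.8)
The plan is to exploit the two hypotheses separately: almost greediness (that is, quasi-greediness together with democracy, by \cite{DKKT2003}*{Theorem 3.3}) will reduce both embeddings to a pair of one-sided inequalities for a natural ``layer'' decomposition, while superreflexivity---through Pisier's renorming theorem---will supply the quantitative estimate that upgrades these inequalities from their trivial endpoint form to a summable form. Throughout write $\phi=\udf[\XB,\XX]$; by democracy $\phi\approx\ldf[\XB,\XX]$, and by hypothesis $\phi$ is exactly the primitive sequence $\prim=(s_m)_{m=1}^\infty$ of $\ww$. Since superreflexivity and the almost greedy property are isomorphic invariants, and the fundamental function only changes by an equivalence under renorming, I may and will assume, by Pisier's theorem, that $\XX$ carries an equivalent norm of modulus of smoothness of power type $t$ for some $1<t\le 2$ and of modulus of convexity of power type $s$ for some $2\le s<\infty$; equivalently, $\XX$ has martingale type $t$ and martingale cotype $s$.

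First I would reformulate the two target embeddings intrinsically. For $f\in\XX$ (which lies in $c_0$ after the coefficient transform, since $\XB$ is quasi-greedy) set $A_j=\{n:2^j\le|\xx_n^*(f)|<2^{j+1}\}$, $g_j=\sum_{n\in A_j}\xx_n^*(f)\,\xx_n$, $n_j=|A_j|$, and $N_j=\sum_{i\ge j}n_i=|\{n:|\xx_n^*(f)|\ge 2^j\}|$. A summation by parts using the doubling of $\prim$ gives
\[
\|f\|_{d_{1,r}(\ww)}\approx\Big(\sum_j b_j^{\,r}\Big)^{1/r},\qquad b_j:=2^{\,j}\phi(N_j),
\]
for every $0<r\le\infty$ (with the usual supremum when $r=\infty$), a relation that does not involve $\XB$. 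Democracy yields $\|g_j\|\approx 2^{\,j}\phi(n_j)$, so that $\|g_j\|\lesssim b_j$, while the concavity (hence subadditivity) of $\phi$ together with a convolution against the geometric kernel $2^{-(i-j)}$, which is bounded on every $\ell_r$, gives the reverse aggregate bound
\[
\Big(\sum_j b_j^{\,r}\Big)^{1/r}\lesssim\Big(\sum_j\|g_j\|^r\Big)^{1/r}.
\]
Consequently the lower embedding $d_{1,q}(\ww)\stackrel{\XB}\hookrightarrow\XX$ follows once one proves $\|f\|\lesssim(\sum_j\|g_j\|^q)^{1/q}$, and the upper embedding $\XX\stackrel{\XB}\hookrightarrow d_{1,r}(\ww)$ follows once one proves $(\sum_j\|g_j\|^r)^{1/r}\lesssim\|f\|$.

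The crux is therefore to establish these two summable layer inequalities with a finite $r$ and a $q>1$. The key structural remark is that the greedy projections $h_j=\sum_{i\ge j}g_i=S_{\{n:\,|\xx_n^*(f)|\ge 2^j\}}(f)$ are uniformly bounded, $\|h_j\|\lesssim\|f\|$, and form, as $j$ decreases from $+\infty$ to $-\infty$, an increasing sequence with terminal value $f$ and increments $g_j=h_j-h_{j+1}$. I would argue that quasi-greediness makes $(h_j)_j$ an approximate martingale with respect to the filtration generated by the coefficient magnitudes, so that the martingale type $t$ and martingale cotype $s$ inequalities apply to it, up to controllable error terms coming from the failure of the exact martingale identity. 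This yields
\[
\|f\|\lesssim\Big(\sum_j\|g_j\|^t\Big)^{1/t}\quad\text{and}\quad\Big(\sum_j\|g_j\|^s\Big)^{1/s}\lesssim\|f\|,
\]
hence the two embeddings with $q=t>1$ and $r=s<\infty$. Note that the endpoints $t=1$ and $s=\infty$ reduce these to the triangle inequality and to the uniform boundedness of the band projections $g_j=h_j-h_{j+1}$, respectively, both of which hold in every Banach space and recover exactly \eqref{eq:SSAG}; superreflexivity is precisely what pushes $t$ above $1$ and $s$ below $\infty$. Finally, should it happen that $t=s=2$, one still obtains $d_{1,2}(\ww)\stackrel{\XB}\hookrightarrow\XX\stackrel{\XB}\hookrightarrow d_{1,2}(\ww)$, and the inclusions among the spaces $d_{1,q}(\ww)$ then deliver any $1<q<2<r<\infty$.

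I expect the main obstacle to be exactly this third step: the greedy filtration $(h_j)_j$ is not an honest martingale---its conditional-expectation structure depends on $f$---so the martingale type and cotype inequalities cannot be invoked verbatim, and one must instead quantify the defect of the martingale property in terms of the quasi-greedy constant and absorb it using the power-type moduli. This is the superreflexive refinement of the conditionality estimates of \cite{GW2014} and \cite{AAGHR2015}; as those works show, the failure of layer-unconditionality for a conditional quasi-greedy basis grows only sublinearly in the number of layers in a superreflexive space, in contrast to the linear growth responsible for the $\log m$ Lebesgue bound in general Banach spaces, and it is precisely this sublinear, $\ell_t$- and $\ell_s$-summable control that must be extracted and fed into the two layer inequalities.
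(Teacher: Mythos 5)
Your reduction of the two embeddings to the pair of layer inequalities $\|f\|\lesssim\bigl(\sum_j\|g_j\|^q\bigr)^{1/q}$ and $\bigl(\sum_j\|g_j\|^r\bigr)^{1/r}\lesssim\|f\|$ is sound, and in fact it mirrors the discretization (dyadic coefficient bands plus Abel summation) that the paper uses inside the proof of Theorem~\ref{prop:embedding1}. The gap is exactly where you place it yourself: the third step. The truncations $h_j=S_{\{n:|\xx_n^*(f)|\ge 2^j\}}(f)$ are deterministic vectors depending on $f$; there is no filtration of $\sigma$-algebras for which they are a martingale, and the deviation of $(h_j)_j$ from any martingale-type identity is governed by the conditionality constants $\unc_m$ of the basis. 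For conditional quasi-greedy bases of superreflexive (even Hilbert) spaces these are unbounded --- they can grow like $(\log m)^a$ by \cite{GW2014} --- so the ``error terms coming from the failure of the exact martingale identity'' are not uniformly controllable, and Pisier's martingale type/cotype inequalities cannot be invoked ``up to controllable errors'' without a genuinely new argument. As written, the proposal asserts rather than proves the crux.

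The paper closes this gap differently, and asymmetrically, which is instructive. For the type-like (lower) inequality it uses no martingales: Lemma~\ref{Lemma1}, Lemma~\ref{lem:Woj} and Theorem~\ref{lem:BW} establish, by a Gurarii--Gurarii-style induction on the number of summands in a uniformly convex norm, the bound $\|\sum_n f_n\|\le K(\sum_n\|f_n\|^q)^{1/q}$ for any family satisfying the alternating-sign condition \eqref{eq:NonHomSchauder} --- which is precisely what quasi-greediness gives for the dyadic layers. The remark following Theorem~\ref{lem:BW} shows this fails if the sign alternation is dropped, evidence that the structure being exploited is the greedy-projection structure and not an approximate martingale structure. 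For the cotype-like (upper) inequality the paper proves nothing direct: it passes to duality. By Proposition~\ref{prop:SSBD} and \cite{DKKT2003} the dual basis $\XB^*$ is almost greedy in the superreflexive space $\XX^*$; Theorem~\ref{prop:embedding1} applied to $\XB^*$ gives $d_{1,s}(\ww^*)\stackrel{\XB^*}\hookrightarrow\XX^*$, and this is dualized through the Lorentz duality theorem (Theorem~\ref{thm:dualB}) together with the equivalence of $\XB^{**}$ and $\XB$. Your second layer inequality is thereby recovered only a posteriori, as a consequence of the dual embedding; no direct proof of it is given, and the related Question~\ref{conj:Anso} (the non-democratic case) is posed as open. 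So to salvage your plan you would either have to supply the missing quantitative martingale-defect estimate --- which is the hard open core, not a routine absorption --- or replace step three by the paper's two mechanisms: the uniformly convex induction for the lower bound and duality for the upper one.
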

Theorem~\ref{thm:main} is optimal in the sense that, even if $\XX$ is a Hilbert space, there is neither a lower bound other that $1$ for the index $q$ nor a finite upper bound for the index $r$. This claim can be deduced from the fact that for each $0<a<1$ there is a quasi-greedy basis of a Hilbert space with $\unc_m\gtrsim (\log m)^a$ \cite{GW2014}*{Theorem 1.2}.

We conclude this introductory section by describing in some detail the contents of this paper. Section~\ref{sect:EQB} will be devoted to proving Theorem~\ref{thm:main}, which will be obtained as a consequence of an embedding theorem for quasi-greedy bases and certain duality techniques. Previously, in Section~\ref{sect:UC} we obtain new inequalities, which play a crucial role within our approach, for uniformly convex norms. Also with an eye in its application within Section~\ref{sect:EQB}, in Section~\ref{sect:SRLSS} we delve into the theory of Lorentz sequence spaces. In particular, we give a duality theorem that improves a theorem of Allen \cite{Allen1978}, and a characterization of superreflexive Lorentz sequence spaces that generalizes a theorem of Altshuler \cite{Altshuler1975}. As a matter of fact, Lorentz sequence spaces involved in Theorem~\ref{thm:main} are superreflexive. So, Theorem~\ref{thm:main} exhibits that any superreflexive Banach space $\XX$ equipped with an almost greedy basis can be sandwiched between symmetric spaces close to each other and of the same nature as $\XX$. In this regard, Theorem~\ref{thm:main} is the almost greedy counterpart of the following classical result on Schauder bases of superreflexive Banach spaces.

\begin{theorem}[\cite{GurGur1971}*{Theorems 1 and 2} and \cite{James1972}*{Theorems 2 and 3}]\label{thm:EJ}
Let $\XB$ is a semi-normalized Schauder basis of superreflexive Banach space $\XX$. Then there are $1<q<r<\infty$ such that $\XB$ $q$-Hilbertian and $r$-Besselian, that is,
\[
\ell_q \stackrel{\XB}\hookrightarrow \XX \stackrel{\XB}\hookrightarrow \ell_r.
\]
\end{theorem}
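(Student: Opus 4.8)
The plan is to extract both embeddings from the geometry that superreflexivity confers after an equivalent renorming, and to collapse the whole statement into a single upper estimate by duality. First I would invoke the Enflo--Pisier renorming theorem: since \XX\ is superreflexive it carries an equivalent norm whose modulus of convexity is of power type $p$ for some $2\le p<\infty$, and the dual space $\XX^{*}$, being superreflexive as well, admits such a norm too. Passing to these norms changes neither the status of \XB\ as a semi-normalized Schauder basis nor, up to constants, the uniform bound $K$ on the partial-sum projections $S_k$ and the coordinate projections $P_k=S_k-S_{k-1}$. I then claim it suffices to prove the right-hand embedding $\XX\stackrel{\XB}\hookrightarrow\ell_r$ (the $r$-Besselian property) for superreflexive spaces, because the left-hand embedding follows by duality: under reflexivity the dual basis $\XB^{*}=(\xx_{n}^{*})_{n=1}^{\infty}$ is a semi-normalized Schauder basis of the superreflexive space $\XX^{*}$, and a computation with adjoints shows that \XB\ is $q$-Hilbertian exactly when $\XB^{*}$ is $q'$-Besselian, with $1/q+1/q'=1$. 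Applying the Besselian estimate to $\XB^{*}$ in $\XX^{*}$ and dualizing then gives $\ell_q\stackrel{\XB}\hookrightarrow\XX$ with $q=(r^{*})'>1$.

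For the Besselian estimate I would run the power-type convexity inequality
\[
\|u\|^{p}+c\,\|v\|^{p}\le\frac{\|u+v\|^{p}+\|u-v\|^{p}}{2}
\]
along the partial sums. Taking $u=\tfrac12(S_kf+S_{k-1}f)$ and $v=\tfrac12 P_kf=\tfrac12\,\xx_k^{*}(f)\,\xx_k$, so that $u+v=S_kf$ and $u-v=S_{k-1}f$, the inequality bounds $c\,2^{-p}\|P_kf\|^{p}$ below the convexity defect $\tfrac12(\|S_kf\|^{p}+\|S_{k-1}f\|^{p})-\|u\|^{p}$. Summing over $k$ telescopes the partial-sum norms, and the remaining defect sum has to be controlled against $\|f\|^{p}$ using $\|S_kf\|\le K\|f\|$. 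Carrying this out through a dyadic blocking of the partial sums — which is what lets one retain the second-order gain while absorbing all cross terms into $K$ — produces a bound $\sum_{n}\|\xx_n^{*}(f)\,\xx_n\|^{r}\le C\|f\|^{r}$ for a finite exponent $r=r(p,K)$. Semi-normalization gives $\|\xx_n^{*}(f)\,\xx_n\|\approx|\xx_n^{*}(f)|$, so this is precisely $\XX\stackrel{\XB}\hookrightarrow\ell_r$. Since power-type convexity of exponent $p$ implies the same for every larger exponent (and dually for smoothness and smaller exponents), one may at the end enlarge $r$ and shrink $q$ so that $1<q<r<\infty$ holds strictly, using $r^{*}\ge 2$ to ensure $q\le 2\le r$ before relaxing.

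The main obstacle is the Besselian estimate itself, that is, the passage from the pointwise modulus inequality to global $\ell_r$ control of the coefficients of a basis that need \emph{not} be unconditional. The defect sum obtained by telescoping does not close naively — the sum of partial-sum norms is of size comparable to $N\,\|f\|^{p}$, and only the convexity gain, organized by blocking, brings it down to $O(\|f\|^{p})$; there is no shortcut through random signs or Rademacher cotype, as those require unconditionality. This is also exactly where the exponent is forced away from the optimum: the conditionality of \XB\ degrades the convexity exponent $p$ into a strictly larger $r$ (and dually the smoothness exponent into a strictly smaller $q$). That this degradation is genuine, and that no finite bound on $r$ nor any improvement of $q$ can be read off from \XX\ alone, is witnessed by the conditional quasi-greedy bases of Hilbert space discussed right after the statement, where $p=2$ yet $r$ may be taken arbitrarily large.
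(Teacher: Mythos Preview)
Your plan and the paper's proof share the duality reduction but run it in opposite directions: the paper establishes the $q$-Hilbertian embedding directly (this is the content of the Remark following Theorem~\ref{lem:BW}) and then dualizes through $\XB^*$ to obtain the $r$-Besselian side, whereas you aim for the Besselian estimate first. More importantly, the direct arguments differ in kind. The paper does not invoke a power-type renorming at all: it uses plain uniform convexity via Lemma~\ref{Lemma1}, a conditional $q$-triangle inequality $\|f+g\|^q\le\|f\|^q+\|g\|^q$ valid whenever $\|f-g\|\ge\varepsilon\max\{\|f\|,\|g\|\}$ and the two norms are comparable. This is fed into the inductive splitting scheme of Theorem~\ref{lem:BW}, where Lemma~\ref{lem:Woj} locates a cut at which the two halves have nearly equal norm. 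The sign-alternation hypothesis~\eqref{eq:NonHomSchauder}---satisfied by any Schauder basis with $C$ depending only on the basis constant---is precisely what supplies the separation condition of Lemma~\ref{Lemma1} at every stage.

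Your Besselian argument has a genuine gap at the step you yourself flag. With your choice of $u$ and $v$, the power-type inequality yields $c\,2^{-p}\|P_kf\|^p\le\tfrac12(\|S_kf\|^p+\|S_{k-1}f\|^p)-\|u\|^p$, and the right-hand side does \emph{not} telescope: the sum over $k$ of $\tfrac12(\|S_kf\|^p+\|S_{k-1}f\|^p)$ is of order $N\|f\|^p$, and nothing cancels against the $\|u\|^p$ terms. The sum would close if $\|u\|\ge\|S_{k-1}f\|$, which holds when the basis is monotone in the renormed norm (since $S_{k-1}u=S_{k-1}f$), but the Enflo--Pisier renorming need not make $\XB$ monotone, and the standard renorming $|||f|||=\sup_k\|S_kf\|$ that enforces monotonicity destroys the power type. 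Your ``dyadic blocking'' is asserted to bridge this, yet that is exactly where all the work lies and you have not carried it out; grouping into $O(\log N)$ dyadic pieces trades $N$ for $\log N$, not for a constant. The paper's route sidesteps this tension entirely: condition~\eqref{eq:NonHomSchauder} holds with $C$ depending only on the basis constant in whatever norm, and Lemma~\ref{Lemma1} needs only uniform convexity, so no simultaneous renorming is required.
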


Finally, in Section~\ref{sect:OP} we present a selection of questions that arise from our work.

We use standard terminology and notation in Banach space theory as can be found, e.g., in \cites{AlbiacKalton2016}. For background on Lorentz sequence spaces and greedy-like bases in the general setting of quasi-Banach spaces, as well as for any terminology not explicitly settled here, we refer the reader to \cite{AABW2021}.

\section{Norm estimates in superreflexive spaces}\label{sect:UC}\noindent
A Banach space $\XX$ is \emph{superreflexive} if every Banach space finite representably in $\XX$ is reflexive. A classical result of Enflo \cite{Enflo1972} characterizes superreflexive spaces as those Banach spaces that can be endowed with an equivalent \emph{uniformly convex} norm, i.e., a norm such that for each $0< \varepsilon<2$ there is $\delta\in(0,1)$ with
\begin{equation}\label{UniformConvexCondition}
\Vert f\Vert, \; \Vert g \Vert \le 1 \text{ and } \Vert f- g\Vert \ge \varepsilon
\Longrightarrow
\Vert f+ g\Vert \le 2(1-\delta).
\end{equation}
For each $ \varepsilon\in[0,2]$, let denote by $\delta_\XX( \varepsilon)$ the largest $\delta$ such that \eqref{UniformConvexCondition} holds, that is,
\begin{equation}\label{eq:mc}
\delta_\XX( \varepsilon)=\inf\left\{1-\frac{\|f+g\|}{2} \colon \|f\|,\ \|g\| \le 1 \mbox{ and } \|f-g\| \ge \varepsilon \right\}.
\end{equation}
The mapping $ \varepsilon\mapsto\delta_\XX( \varepsilon)$, called \emph{modulus of convexity of the norm}, is a nondecreasing function with $\delta_\XX(0)=0$ and $\delta_\XX(2)=1$. The space $\XX$ is uniformly convex if and only if $\delta_\XX( \varepsilon)>0$ for $ \varepsilon >0$. It is known \cite{Figiel1976} that $\delta_\XX$ can be defined as well replacing inequalities with equalities in \eqref{eq:mc} (see also \cite{LinTza1979}).

Given $1<q<\infty$, any norm $\Vert \cdot\Vert$ satisfies the inequality
\[
\Vert f+g\Vert^q \le C ( \Vert f\Vert^q+\Vert g\Vert^q), \quad f,\, g \in\XX
\]
with $C=2^{q-1}$. If the norm is uniformly convex we can get $C<2^{q-1}$ under the assumption that $\Vert f-g\Vert\ge\varepsilon \max\{\Vert f\Vert, \Vert g\Vert\} $ for some $\varepsilon>0$ (see \cite{AAGHR2015}*{Proof of Lemma 2.1}). Our first result shows that we can even obtain $C=1$ under an extra assumption on the vectors $f$ and $g$, i.e., any uniformly convex norm satisfies, under certain restrictions, a triangle $q$-law for some $q>1$.

\begin{lemma}\label{Lemma1}
Let $\XX$ be a uniformly convex Banach space. Then, for every $0< \varepsilon<2$ there are $q>1$ and $0<\eta<1$ with the property that for every $f$, $g\in \XX$ with
\begin{enumerate}[label=(\roman*),leftmargin=*, widest=ii]
\item\label{cond:1} $\min\{\Vert f\Vert, \Vert g\Vert\}\ge (1-\eta)\max\{\Vert f\Vert, \Vert g\Vert\}$ and
\item\label{cond:2} $\|f-g\|\ge \varepsilon \max\{\Vert f\Vert, \Vert g\Vert\}$,
\end{enumerate}
we have $ \|f+g\|^q\leq \|f\|^q+\|g\|^q$.
\end{lemma}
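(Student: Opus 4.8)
The plan is to reduce everything, via homogeneity and the symmetry of the hypotheses in $f$ and $g$, to a single scalar inequality and then close it by a continuity argument in the exponent $q$. First I would normalize: since both conditions \ref{cond:1} and \ref{cond:2} are invariant under the scaling $(f,g)\mapsto(\lambda f,\lambda g)$ and are symmetric in $f$ and $g$, I may assume $\|f\|=\max\{\|f\|,\|g\|\}=1$ and write $b:=\|g\|$, so that \ref{cond:1} becomes $b\in[1-\eta,1]$ and \ref{cond:2} becomes $\|f-g\|\ge\varepsilon$.

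Next, the whole point is that uniform convexity controls $\|f+g\|$ from above. Since $\|f\|\le 1$, $\|g\|\le 1$, and $\|f-g\|\ge\varepsilon$, the definition \eqref{eq:mc} of the modulus of convexity gives immediately
\[
\|f+g\|\le 2\bigl(1-\delta_\XX(\varepsilon)\bigr).
\]
Set $\delta:=\delta_\XX(\varepsilon)$, which is strictly positive because $\XX$ is uniformly convex and $\varepsilon>0$. Raising to the power $q$ yields $\|f+g\|^q\le 2^q(1-\delta)^q$, while on the other side $\|f\|^q+\|g\|^q=1+b^q\ge 1+(1-\eta)^q$. Hence the desired conclusion will follow once I establish the purely scalar inequality $2^q(1-\delta)^q\le 1+(1-\eta)^q$.

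It remains to choose $q$ and $\eta$. Consider $\Phi(q):=1+(1-\eta)^q-2^q(1-\delta)^q$; I want $\Phi(q)\ge 0$ for some $q>1$. Evaluating at $q=1$ gives $\Phi(1)=2-\eta-2(1-\delta)=2\delta-\eta$, so choosing any $\eta\in(0,\min\{2\delta,1\})$ makes $\Phi(1)>0$. Since $\Phi$ is continuous in $q$, the strict inequality at $q=1$ persists on an interval $[1,q_0)$, and any $q\in(1,q_0)$ works. This fixes both parameters in terms of $\varepsilon$ (through $\delta=\delta_\XX(\varepsilon)$), as required.

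The main obstacle, which is conceptual rather than computational, is recognizing why the two hypotheses are exactly what is needed: condition \ref{cond:2} is what activates the convexity gain $\delta>0$ in the bound $\|f+g\|\le 2(1-\delta)$, whereas condition \ref{cond:1} is what keeps the right-hand side $1+b^q$ close to $2$, so that the trivial additive estimate at $q=1$ (where $2(1-\delta)\le 2-\eta$ reads $\eta\le 2\delta$) can be upgraded to a genuine $q$-power inequality with $q>1$. The quantitative heart is therefore the single comparison $\eta<2\delta$; once this balance is in place, continuity in the exponent does the rest, and no delicate estimation is needed.
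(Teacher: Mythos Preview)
Your proof is correct and follows essentially the same line as the paper's: normalize to $\|f\|=1$, $\|g\|\le 1$, apply the modulus-of-convexity bound $\|f+g\|\le 2(1-\delta_\XX(\varepsilon))$, and reduce the conclusion to a scalar inequality in $q$ and $\eta$. The only difference is in closing that scalar inequality: the paper makes an explicit choice (picking $q$ with $\lambda^q<2$ and then $\eta=1-(\lambda^q-1)^{1/q}$, after a case split on $\lambda\le 1$), whereas you fix $\eta<2\delta_\XX(\varepsilon)$ first and invoke continuity in $q$---a softer but equally valid variant that avoids the case distinction at the cost of explicit constants.
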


\begin{proof}
Our proof is based on arguments from \cite{GurGur1971}.  Using homogeneity, and swapping the roles of $f$ and $g$ when necessary, we can restrict ourselves to vectors $f$ and $g$ with
\begin{enumerate}[label=($\dagger$)]
\item\label{cond:3} $\Vert g\Vert \le 1 =\Vert f\Vert$.
\end{enumerate}
Let $\delta_\XX$ denote modulus of convexity of the norm. Set
\[
\lambda:= 2(1-\delta_\XX( \varepsilon)),
\]
so that from the definition of $\delta_\XX$ we have $\|f+g\|\leq \lambda$ whenever $f$ and $g$ satisfy \ref{cond:2} and \ref{cond:3}. Note that $0<\lambda<2$. In the case when $\lambda\leq 1$, then for any $q> 1$ we have
\[
\|f+g\|^q\leq\lambda^q\leq 1 =\Vert f \Vert^q \leq \Vert f\Vert^q+\|g\|^q.
\]
In the case when $\lambda>1$, we pick $1 <q <\log_\lambda 2$, so that $1<\lambda^q<2$. Then, we choose
\[
\eta=1-(\lambda^q-1)^{1/q},
\]
so that $1+t^q\ge \lambda^q$ for every $t\ge 1-\eta$. Let $f$ and $g$ be vectors in $\XX$ satisfying \ref{cond:1}, \ref{cond:2} and \ref{cond:3}. We have
\[
\|f+g\|^q\leq \lambda^q\le 1+\Vert g\Vert^q =\Vert f\Vert^q+\Vert g\Vert^q.\qedhere
\]
\end{proof}

Building on Lemma~\ref{Lemma1}, we will achieve conditions under which series in $\ell_q(\XX)$, with $\XX$ superreflexive and $q>1$, converge. Prior to state and prove this result, we give an auxiliary lemma.

\begin{lemma}\label{lem:Woj}
Let $(f_n)_{n=1}^m$ be a finite family in a Banach space. Then, there is $0\le k \le m$ such that
\[
A_k:=\left| \Big\Vert \sum_{n=1}^k f_n\Big\Vert-\Big\Vert \sum_{n=k+1}^m f_n\Big\Vert\right|\le B:=\max_{1\le n\le m} \Vert f_n\Vert.
\]
\end{lemma}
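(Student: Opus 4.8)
The plan is to reduce the statement to an elementary discrete intermediate–value argument for a single real-valued function of the cut point $k$. I would set $P_k=\sum_{n=1}^k f_n$ and $S_k=\sum_{n=k+1}^m f_n$ for $0\le k\le m$ (with the empty sums equal to $0$), and define
\[
g(k)=\Vert P_k\Vert-\Vert S_k\Vert,\qquad 0\le k\le m,
\]
so that $A_k=|g(k)|$ and the claim becomes: $g$ takes a value in $[-B,B]$. The two endpoints already supply a sign change, since $P_0=0=S_m$ gives $g(0)=-\Vert P_m\Vert\le 0$ and $g(m)=\Vert P_m\Vert\ge 0$.

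The decisive step is to control how much $g$ can move in one increment of $k$. Writing $g(k)-g(k-1)=(\Vert P_k\Vert-\Vert P_{k-1}\Vert)+(\Vert S_{k-1}\Vert-\Vert S_k\Vert)$ and using $P_k=P_{k-1}+f_k$ together with $S_{k-1}=f_k+S_k$, the reverse triangle inequality bounds each of the two parentheses in absolute value by $\Vert f_k\Vert\le B$. Hence $|g(k)-g(k-1)|\le 2B$ for every $1\le k\le m$.

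With the sign change and the step bound in hand, I would let $k_0$ be the least index with $g(k_0)\ge 0$. If $k_0=0$, then $g(0)=0$ and $A_0=0$, so I may assume $k_0\ge 1$; by minimality $g(k_0-1)<0$. It then suffices to check that at least one of $g(k_0-1)$, $g(k_0)$ lies in $[-B,B]$. Suppose not: then $g(k_0)>B$ and $g(k_0-1)<-B$, whence $g(k_0)-g(k_0-1)>2B$, contradicting the step bound. Therefore $A_{k_0-1}\le B$ or $A_{k_0}\le B$, and the required index $k$ exists.

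The only subtlety — and the place where a careless argument fails — is that the per-step bound is $2B$ rather than $B$, so one cannot directly read off a value of $g$ inside $[-B,B]$ from a single crossing. The point is that the band $[-B,B]$ has width exactly $2B$, equal to the maximal admissible jump, so $g$ cannot pass from below $-B$ to above $B$ in a single step; this impossibility is precisely what the final contradiction exploits.
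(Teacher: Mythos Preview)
Your proof is correct and follows essentially the same approach as the paper: both arguments locate a sign change of $g(k)=\Vert P_k\Vert-\Vert S_k\Vert$ (the paper picks the \emph{last} $k$ with $g(k)\le 0$, you pick the \emph{first} $k_0$ with $g(k_0)\ge 0$) and then use the reverse triangle inequality to bound the single-step increment by $2B$, concluding that one of the two adjacent values at the crossing must land in $[-B,B]$. Your framing as a discrete intermediate-value argument is slightly cleaner, and your step bound $|g(k)-g(k-1)|\le 2\Vert f_k\Vert$ is in fact sharper than the paper's stated $\Vert f_k\Vert+\Vert f_{k+1}\Vert$ (which looks like a minor slip for $2\Vert f_{k+1}\Vert$), but the content is the same.
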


\begin{proof}
If $\sum_{n=1}^m f_n=0$, then $A_k=0$ for all $0\le k\le m$. Suppose that $\sum_{n=1}^m f_n\not=0$. Then,
there is $0\le k\le m-1$ maximal with the property that $\Vert \sum_{n=1}^k f_n\Vert-\Vert \sum_{n=k+1}^m f_n\Vert\le 0$. We have 
\begin{align*}
2\min\{A_k,A_{k+1}\}
&\le A_k+A_{k+1}\\
&= - \left\Vert \sum_{n=1}^k f_n\right\Vert+\left\Vert \sum_{n=k+1}^m f_n\right\Vert
+\left\Vert \sum_{n=1}^{k+1} f_n\right\Vert-\left\Vert \sum_{n=k+2}^m f_n\right\Vert\\
&\le \Vert f_k\Vert + \Vert f_{k+1}\Vert \\
&\le 2B.
\end{align*}
Consequently, either $A_k\le B$ or $A_{k+1}\le B$.
\end{proof}

\begin{theorem}\label{lem:BW}
Let $\XX$ be a superreflexive Banach space. For every $C>0$ there are $K>0$ and $q>1$ such that if a sequence $(f_n)_{n=1}^\infty$ in $\XX$ satisfies $\sum_{n=1}^\infty \Vert f_n\Vert^q<\infty$ and 
\begin{equation}\label{eq:NonHomSchauder}
\left\Vert \sum_{n=j}^k f_n\right\Vert \le C \left\Vert\sum_{n=j}^k f_n - \sum_{n=k+1}^m f_n\right\Vert, \quad 1\le j \le k \le m,
\end{equation}
then $\sum_{n=1}^\infty f_n$ converges in $\XX$ and 
\[
\left\Vert \sum_{n=1}^\infty f_n\right\Vert\le K\left( \sum_{n=1}^\infty \Vert f_n\Vert^q\right)^{1/q}.
\]
\end{theorem}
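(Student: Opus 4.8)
The plan is to pass to an equivalent uniformly convex norm and then run an adaptive balanced bisection of each finite block, combining Lemma~\ref{lem:Woj} (to force the two halves to have nearly equal norm) with the hypothesis \eqref{eq:NonHomSchauder} (to force the two halves to be well separated), so that Lemma~\ref{Lemma1} upgrades the triangle inequality to the lossless $q$-additive estimate $\Vert u+v\Vert^q\le \Vert u\Vert^q+\Vert v\Vert^q$ at every bisection step. First I would invoke the characterization of superreflexivity recalled at the start of this section to replace the norm of $\XX$ by an equivalent uniformly convex one; since \eqref{eq:NonHomSchauder} and the desired conclusion are stable under passing to an equivalent norm (at the cost of enlarging $C$ and $K$ by the equivalence constants), there is no loss in assuming $\XX$ uniformly convex. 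The key structural observation is that, because \eqref{eq:NonHomSchauder} is imposed for \emph{every} $m$, it is hereditary for intervals: for any $1\le a\le c<b$, letting the index there called $m$ be $b$ gives $\Vert\sum_{n=a}^c f_n\Vert\le C\Vert\sum_{n=a}^c f_n-\sum_{n=c+1}^b f_n\Vert$. Thus every contiguous block, split at an interior point into a left part $u$ and a right part $v$, satisfies $\Vert u\Vert\le C\Vert u-v\Vert$, i.e.\ $\Vert u-v\Vert\ge \Vert u\Vert/C$.

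Next I would calibrate the constants. Fix $\varepsilon_0>0$ small and let $q>1$ and $\eta\in(0,1)$ be produced by Lemma~\ref{Lemma1} for this $\varepsilon_0$; inspecting its proof shows $\eta\to 0$ as $\varepsilon_0\to 0$, so I may pick $\varepsilon_0$ small enough that $\varepsilon_0\le (1-\eta)/C$. With these fixed, I bound an arbitrary finite block $\sum_{n\in I}f_n$ over an interval $I$ by recursive bisection. At a block $I$, apply Lemma~\ref{lem:Woj} to split it into a left half $u$ and a right half $v$ with $|\Vert u\Vert-\Vert v\Vert|\le B_I:=\max_{n\in I}\Vert f_n\Vert$. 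If this split is trivial or $\max\{\Vert u\Vert,\Vert v\Vert\}<B_I/\eta$, I declare $I$ a leaf: then $\Vert\sum_{n\in I}f_n\Vert\le \Vert u\Vert+\Vert v\Vert\le (2/\eta)B_I$, whence $\Vert\sum_{n\in I}f_n\Vert^q\le (2/\eta)^q\sum_{n\in I}\Vert f_n\Vert^q$ because $B_I=\Vert f_{n^*}\Vert$ for some $n^*\in I$. Otherwise $\max\{\Vert u\Vert,\Vert v\Vert\}\ge B_I/\eta$, which together with $|\Vert u\Vert-\Vert v\Vert|\le B_I$ forces $\min\ge(1-\eta)\max$, that is, hypothesis \ref{cond:1}; and $\Vert u-v\Vert\ge \Vert u\Vert/C\ge (1-\eta)\max\{\Vert u\Vert,\Vert v\Vert\}/C\ge \varepsilon_0\max\{\Vert u\Vert,\Vert v\Vert\}$ gives hypothesis \ref{cond:2}. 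Hence Lemma~\ref{Lemma1} yields $\Vert\sum_{n\in I}f_n\Vert^q=\Vert u+v\Vert^q\le \Vert u\Vert^q+\Vert v\Vert^q$, and I recurse on the two halves.

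Telescoping this finite binary tree (the $q$-additivity at internal nodes is lossless and the leaves partition $I$) produces the uniform estimate $\Vert\sum_{n\in I}f_n\Vert\le (2/\eta)\big(\sum_{n\in I}\Vert f_n\Vert^q\big)^{1/q}$ for every interval $I$, with $K:=2/\eta$. Applied to tail intervals $I=[N+1,N']$ and using $\sum_n\Vert f_n\Vert^q<\infty$, this shows the partial sums of $\sum_n f_n$ are Cauchy, hence convergent, and letting the interval exhaust $\NN$ gives the asserted bound. I expect the main obstacle to be exactly the calibration step: arranging that the near-balance supplied by Lemma~\ref{lem:Woj} and the separation supplied by \eqref{eq:NonHomSchauder} simultaneously meet both hypotheses of Lemma~\ref{Lemma1} for a single pair $(\varepsilon_0,\eta)$, and phrasing the leaf estimate through the \emph{local} maximum $B_I$ rather than a global one, so that the leaf contributions add up to $\sum_n\Vert f_n\Vert^q$.
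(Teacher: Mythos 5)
Your proposal is correct and is essentially the paper's own proof: after passing to a uniformly convex renorming, the paper likewise splits each block by Lemma~\ref{lem:Woj}, verifies the two hypotheses of Lemma~\ref{Lemma1}, and runs an induction on block length (your bisection tree phrased as strong induction), with leaf case $\Vert\sum_{n=j}^{m}f_n\Vert\le KB$ and the same constant $K=2/\eta$. The only difference is cosmetic: instead of your coupled calibration $\varepsilon_0\le(1-\eta)/C$ (which needs an inspection of Lemma~\ref{Lemma1}'s proof, or the observation that $\eta$ may always be shrunk), the paper fixes $\varepsilon=1/(1+C)$ and gets hypothesis (ii) of Lemma~\ref{Lemma1} directly from $\Vert G_k\Vert\le C\Vert G_k-H_k\Vert$ and the triangle inequality, so that $\varepsilon$ never depends on $\eta$.
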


\begin{proof}
Assume without lost of generality that $\XX$ is equipped with a uniformly convex norm. Fix $C>0$, and let $q>1$ and $0<\eta<1$ be as in Lemma~\ref{Lemma1} with respect to $ \varepsilon=1/(1+C)$. Setting $K:=2/\eta$, we  will prove that
\begin{equation}\label{eq:BW}
\left\Vert \sum_{n=j}^m f_n \right\Vert \le K \left( \sum_{n=j}^m \Vert f_n\Vert^q\right)^{1/q}
\end{equation}
for all integers $j,m$ with $1\le j\le m$. To that end, we proceed by induction on $m-j$. Note that \eqref{eq:BW} trivially holds if $m-j\in\{0,1\}$ because $K>2$. Suppose that the result holds whenever $m-j\in\{0,1,\ldots,r-1\}$, for some $r\ge2$, and take $j,m$ with $m-j=r$. Set
\[B:=\max_{j\le n \le m} \Vert f_n\Vert. 
\]
If $\Vert\sum_{n=j}^m f_n\Vert\le KB$, then \eqref{eq:BW} obviously holds; so let us assume that 
\[
\left\Vert\sum_{n=j}^m f_n\right\Vert>KB. 
\]
By Lemma~\ref{lem:Woj}, there is $j-1\le k\le m$ such that $| \Vert G_k\Vert-\Vert H_k\Vert |\le B$, where 
\[
G_k:=\sum_{n=j}^k f_n, \quad H_k:=\sum_{n=k+1}^m f_n. 
\]
Since $K\ge1$, note that in fact $j\le k\le m-1$. Hence both $k-j$ and $m-(k+1)$ are in $\{0,1,\ldots,r-1\}$. Setting $D:=\max\{\Vert G_k\Vert,\Vert H_k\Vert\}$ and $d:=\min\{\Vert G_k\Vert,\Vert H_k\Vert\}$, we have $D-d=| \Vert G_k\Vert-\Vert H_k\Vert |\le B$ and $B/\eta=KB/2<\Vert G_k+ H_k\Vert/2\le D$. Hence $d>(1-\eta)D$. By \eqref{eq:NonHomSchauder}, $\Vert G_k\Vert\le C\Vert G_k-H_k\Vert$, and it follows that $D\le(1+C)\Vert G_k-H_k\Vert$. Therefore, applying Lemma~\ref{Lemma1} and the induction hypothesis, we have 
\begin{align*}
\left\Vert \sum_{n=j}^mf_n\right\Vert^q&\le\Vert G_k\Vert^q+\Vert H_k\Vert^q\le K^q \sum_{n=j}^k \Vert f_n\Vert^q+ K^q\sum_{n=k+1}^m \Vert f_n\Vert^q\\&=K^q \sum_{n=j}^m \Vert f_n\Vert^q.
\end{align*}
Thus \eqref{eq:BW} is proved. We infer that $\sum_{n=1}^{\infty}f_n$ is a Cauchy series and the statement follows. 
\end{proof}

\begin{remark}
Theorem~\ref{lem:BW} gives that any semi-normalized Schauder basis $\XB$ of a superreflexive Banach space $\XX$ is $q$-Hilbertian for some $q>1$. Taking into account that $\XB^{**}$ is equivalent to $\XB$ (see \cite{AlbiacKalton2016}*{Corollary 3.2.4}), applying this result to $\XB^*$ and dualizing gives that $\XB$ also is $r$-Besselian for some $r<\infty$. Thus, Theorem~\ref{lem:BW} leads to a new proof of Theorem~\ref{thm:EJ}.
\end{remark}

\begin{remark}
The alternation of signs in the assumptions of Theorem~\ref{lem:BW} is essential. In fact, as we next show, the result does not hold under the assumption
\begin{equation}\label{eq:DW}
\left\Vert \sum_{n=j}^{k} f_n \right\Vert \le C \left\Vert \sum_{n=j}^m f_j \right\Vert, \quad 1\le j \le k \le m.
\end{equation}
Define $\xx_n=\ee_0+\ee_n$ for $n\in\NN$, where $(\ee_n)_{n=0}^\infty$ is the canonical basis in $\ell_2$. Biorthogonal functionals are $\xx_n^*=\ee_n^*$ for $n\in\NN$. Note that
\[
\frac1m\sum_{n=1}^m \xx_m= \ee_0+\frac{1}{m}\sum_{n=1}^m \ee_n
\]
converges in $\ell_2$-norm to $\ee_0$. This implies that $(\xx_n)_{n=1}^\infty$ is a basis of $\ell_2$ whose dual basis is norm-bounded. We have
\[
\left\|\sum_{n=j}^m \xx_n\right\|=\sqrt{(1+m-j)^2+1+m-j}, \quad 1\le j\le m.
\]
Consequently, for every $s\in\NN$, the sequence $(f_{s,n})_{n=1}^\infty$ defined by $f_{s,n}=\xx_n$ if $n\le s$ and $f_{s,n}=0$ otherwise satisfies \eqref{eq:DW} with $C=1$. However, for any $q>1$,
\[
\frac{\left\|\sum_{n=1}^\infty f_{s,n}\right\|}{ \left(\sum_{n=1}^\infty \Vert f_{s,n}\Vert^q\right)^{1/q}}
=\frac{\left\|\sum_{n=1}^s \xx_n\right\|}{ \left(\sum_{n=1}^s \Vert \xx_n\Vert^q\right)^{1/q}}
=\frac{\sqrt{s^2+s}}{\sqrt{2} \, s^{1/q}}
\xrightarrow[s \to \infty]{}\infty.
\]
\end{remark}

\section{Superreflexive Lorentz sequence spaces}\label{sect:SRLSS}\noindent
To undertake our study of Lorentz sequence spaces, we need to introduce some terminology. A sequence $(t_n)_{n=1}^\infty$ of positive scalars is said to be \emph{essentially increasing} if
\[
\sup_{n\le m} \frac{t_n}{t_m}<\infty.
\]
The sequence $(t_n)_{n=1}^\infty$ is essentially increasing if and only if it is equivalent to a non-decreasing sequence of positive scalars.

Following \cite{DKKT2003}, we say that $(t_n)_{n=1}^\infty$ has the \emph{upper regularity property} (URP for short) if there is an integer $b\ge 2$, such that
\[
t_{bn}\le \frac{b}{2} t_n, \quad n\in\NN.
\]
The following lemma is essentially known.

\begin{lemma}[cf.\ \cite{AlbiacAnsorena2016}*{Lemma 2.12}]\label{lem:URP}
Consider the following conditions associated with a sequence $\primt=(t_n)_{n=1}^\infty$ of positive scalars.
\begin{enumerate}[label=(\alph*), leftmargin=*,widest=c]
\item\label{URP:1} $\primt$ has the URP.
\item\label{URP:2} There is $0<a<1$ such that $(n^a/t_n)_{n=1}^\infty$ is essentially increasing.
\item\label{URP:3} There is a constant $C$ such that
\[
u_m:=\frac{1}{m}\sum_{n=1}^m \frac{1}{t_n}\le C \frac{1}{t_m}, \quad m\in\NN.
\]
\end{enumerate}
Then \ref{URP:1} implies \ref{URP:2}, and \ref{URP:2} implies \ref{URP:3}. Moreover, if $\primt$ is essentially increasing, then \ref{URP:3} implies \ref{URP:1}, and $u_m\approx 1/t_m$ for $m\in\NN$.
\end{lemma}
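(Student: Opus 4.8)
The plan is to prove the cycle \ref{URP:1}$\Rightarrow$\ref{URP:2}$\Rightarrow$\ref{URP:3} and then \ref{URP:3}$\Rightarrow$\ref{URP:1}, exploiting throughout that each of the three conditions is stable under replacing $\primt$ by an equivalent sequence. (For \ref{URP:1} this stability is not entirely formal: one iterates the defining inequality and absorbs the equivalence constants into a sufficiently high power $b^{\,j}$, which is legitimate precisely because the per-step factor $b/2$ is strictly smaller than $b$.) Since an essentially increasing sequence is equivalent to a nondecreasing one, whenever monotonicity is convenient I would assume $\primt$ nondecreasing, so that $(1/t_n)_n$ is nonincreasing. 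With this reduction the comparison $u_m\approx 1/t_m$ claimed at the end is immediate: summing the nonincreasing sequence gives $u_m\ge 1/t_m$ for free, while the reverse inequality $u_m\le C/t_m$ is exactly \ref{URP:3}.

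For \ref{URP:1}$\Rightarrow$\ref{URP:2}, first I would iterate the URP inequality to get $t_{b^k n}\le (b/2)^k t_n$ for all $k,n$. Then I would fix $a\in(0,1)$ with $b^{1-a}\le 2$, which is possible exactly because $b\ge 2$ (for instance $a=1-\tfrac{\log 2}{2\log b}$, giving $b^{1-a}=\sqrt2$). Given $n\le m$, I choose $k\ge 0$ with $b^k n\le m<b^{k+1}n$; monotonicity yields $t_m\le t_{b^{k+1}n}\le (b/2)^{k+1}t_n$, while $b^{ka}\le (m/n)^a$. Dividing and using $b^{1-a}\le 2$ collapses the resulting factor $(b^{1-a}/2)^k\le 1$, leaving $t_m/t_n\le\tfrac b2\,(m/n)^a$; equivalently $(n^a/t_n)_n$ is essentially increasing.

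The implication \ref{URP:2}$\Rightarrow$\ref{URP:3} is a direct summation and needs no monotonicity. From \ref{URP:2} one has $1/t_n\le K (m/n)^a/t_m$ for $n\le m$, so summing and invoking $a<1$ through $\sum_{n=1}^m n^{-a}\lesssim \tfrac{1}{1-a}\,m^{1-a}$ gives $\sum_{n=1}^m 1/t_n\lesssim \tfrac{K}{1-a}\,m/t_m$, which is \ref{URP:3}.

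The hard part will be \ref{URP:3}$\Rightarrow$\ref{URP:1}. Here the naive estimate, bounding $\sum_{n\le bm}1/t_n$ below by $\sum_{n\le m}1/t_n\ge m/t_m$ and feeding it into \ref{URP:3}, only yields the linear bound $t_{bm}\le Cb\,t_m$, which never beats $\tfrac b2$. The point I would exploit is that \ref{URP:3} forces the partial sums $S_m:=\sum_{n=1}^m 1/t_n=m u_m$ to grow \emph{super}-linearly: writing $1/t_m=S_m-S_{m-1}\ge S_m/(Cm)$ (which is \ref{URP:3}) gives the recursion $S_m/S_{m-1}\ge Cm/(Cm-1)$, and telescoping over $m<j\le bm$ produces $S_{bm}/S_m\gtrsim b^{1/C}$. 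The delicate step, which I expect to be the main obstacle, is lower-bounding this telescoping product \emph{uniformly in $m$}, via $\log(1+x)\gtrsim x$ together with $\sum_{m<j\le bm}1/j\approx\log b$. Granting it, I would substitute $S_{bm}\gtrsim b^{1/C}S_m\gtrsim b^{1/C}\,m/t_m$ back into \ref{URP:3} to obtain $t_{bm}\le C\,b^{\,1-1/C}t_m$. Since $C\ge 1$ forces $1-1/C<1$, this growth is sub-linear in $b$, so choosing $b\ge (2C)^C$ gives $t_{bm}\le\tfrac b2 t_m$, that is, the URP.
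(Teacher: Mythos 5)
The statement you were asked to prove is one the paper itself does not prove: Lemma~\ref{lem:URP} is introduced as ``essentially known'' and deferred to \cite{AlbiacAnsorena2016}*{Lemma 2.12}, so there is no internal proof to compare against. Judged on its own merits, your argument is the natural one and its steps check out. The stability of the URP under equivalence of sequences is handled correctly (iterating $t_{b^jn}\le (b/2)^jt_n$ and absorbing the equivalence constant $c^2$ by taking $2^{j-1}\ge c^2$). The implication \ref{URP:2}$\Rightarrow$\ref{URP:3} by summing $1/t_n\le K(m/n)^a/t_m$ against $\sum_{n=1}^m n^{-a}\lesssim m^{1-a}/(1-a)$ needs no monotonicity, as you say. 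For \ref{URP:3}$\Rightarrow$\ref{URP:1}, the step you feared would be delicate is in fact automatic: from $1/t_j=S_j-S_{j-1}\ge S_j/(Cj)$ and $-\log(1-x)\ge x$ one gets $\log(S_{bm}/S_m)\ge C^{-1}\sum_{m<j\le bm}1/j\ge C^{-1}\log\frac{bm+1}{m+1}\ge C^{-1}\log(b/2)$, uniformly in $m$; feeding $S_{bm}\ge (b/2)^{1/C}S_m\ge (b/2)^{1/C}m/t_m$ back into \ref{URP:3} at $bm$ gives $t_{bm}\le C\,2^{1/C}b^{1-1/C}t_m$, and any sufficiently large integer $b$ (of size roughly $(2C)^C$, up to harmless constants) yields the URP. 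The lower bound $u_m\ge 1/t_m$ from monotonicity, together with \ref{URP:3}, gives $u_m\approx 1/t_m$.

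The one point you should not gloss over is your blanket reduction ``assume $\primt$ nondecreasing whenever convenient.'' That reduction is licensed only when $\primt$ is essentially increasing, and the lemma hypothesizes this only for the ``moreover'' part; yet your proof of \ref{URP:1}$\Rightarrow$\ref{URP:2} uses monotonicity essentially, in the step $t_m\le t_{b^{k+1}n}$. This cannot be repaired, because the implication as literally stated is false for general positive sequences: writing $n=2^kq$ with $q$ odd and setting $t_n:=q$, one has $t_{2n}=t_n$, so the URP holds with $b=2$, while $t_{2^k+1}/t_{2^k}=2^k+1$ is unbounded even though $\bigl((2^k+1)/2^k\bigr)^a\le 2$; hence $(n^a/t_n)_{n=1}^\infty$ is not essentially increasing for any $0<a<1$ (and \ref{URP:3} fails too). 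So the lemma must be read with monotonicity, or essential increasingness, implicitly assumed throughout --- which is consistent with the cited source and with every application in this paper, where the sequences in question are primitive sequences of weights or are separately shown to be essentially increasing. Your proof is complete under that reading, but the reading should be stated explicitly for the first implication rather than treated as a convenience; as written, the justification you give for it (equivalence to a nondecreasing sequence) is not available at that point.
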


\begin{lemma}\label{lem:ImprovedURP}
Suppose that the sequence $(t_n)_{n=1}^\infty$ is essentially increasing and has the URP. Then, there is $p_0>1$ such that $(t_n^p)_{n=1}^\infty$ has the URP for every $1< p < p_0$.
\end{lemma}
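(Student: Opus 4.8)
The plan is to route everything through the power-type reformulation of the URP supplied by Lemma~\ref{lem:URP}. The decisive observation is that condition \ref{URP:2}, namely that $(n^a/t_n)_{n=1}^\infty$ be essentially increasing for some $0<a<1$, transforms transparently under raising the sequence to a positive power, whereas the raw definition of the URP (the inequality $t_{bn}\le (b/2)t_n$ for a fixed integer $b$) does not. So the first step is to invoke the implication \ref{URP:1}$\Rightarrow$\ref{URP:2}: since $(t_n)_{n=1}^\infty$ has the URP, there is $0<a<1$ such that $(n^a/t_n)_{n=1}^\infty$ is essentially increasing.

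I would then set $p_0:=1/a$, which satisfies $p_0>1$ precisely because $a<1$, and fix any $p$ with $1<p<p_0$, so that the exponent $a':=ap$ lies in $(a,1)\subset(0,1)$. The mechanism is the elementary but decisive fact that a positive power of an essentially increasing sequence is again essentially increasing: if $\sup_{n\le m}(s_n/s_m)<\infty$, then $\sup_{n\le m}(s_n/s_m)^p<\infty$ for every $p>0$, since $x\mapsto x^p$ is increasing on the positive reals. Applying this with $s_n=n^a/t_n$ gives that $(s_n^p)_{n=1}^\infty=(n^{a'}/t_n^p)_{n=1}^\infty$ is essentially increasing, and applying it with $s_n=t_n$ gives that $(t_n^p)_{n=1}^\infty$ is essentially increasing as well.

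Finally, I would feed these two facts back into Lemma~\ref{lem:URP}. The sequence $(t_n^p)_{n=1}^\infty$ is essentially increasing and satisfies condition \ref{URP:2} with the admissible exponent $a'\in(0,1)$; since for essentially increasing sequences all three conditions are equivalent (the chain \ref{URP:2}$\Rightarrow$\ref{URP:3} combined with the moreover clause \ref{URP:3}$\Rightarrow$\ref{URP:1}), I conclude that $(t_n^p)_{n=1}^\infty$ has the URP. As this holds for every $p\in(1,p_0)$, the lemma follows.

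I do not anticipate a genuine obstacle; the only point requiring care is the bookkeeping of the exponent. One must verify that the single constraint $a'=ap<1$ is exactly what pins $p$ below $p_0=1/a$, and that the resulting threshold is strictly larger than $1$. The argument is uniform in $p\in(1,p_0)$ and does not degenerate as $p\uparrow p_0$, the latter corresponding to $a'\uparrow 1$, where condition \ref{URP:2} just barely remains valid.
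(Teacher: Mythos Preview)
Your proof is correct and follows essentially the same route as the paper: invoke Lemma~\ref{lem:URP} to pass from the URP to condition~\ref{URP:2} with some $0<a<1$, set $p_0=1/a$, observe that raising to the $p$th power yields condition~\ref{URP:2} for $(t_n^p)$ with exponent $ap\in(0,1)$, and apply Lemma~\ref{lem:URP} again. You are slightly more explicit than the paper in noting that $(t_n^p)$ is essentially increasing, which is indeed needed to close the loop back to~\ref{URP:1} via the ``moreover'' clause.
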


\begin{proof}
By Lemma~\ref{lem:URP}, there is $0<a<1$ such that $(n^a/t_n)_{n=1}^\infty$ is essentially increasing. Set $p_0:=1/a$. If $1<p<p_0$, then $0<ap<1$ and $(n^{ap}/t_n^p)_{n=1}^\infty$ is essentially increasing. Applying again Lemma~\ref{lem:URP} gives that $(t_n^p)_{n=1}^\infty$ has the URP.
\end{proof}

We also need the dual property of the URP. We say that $(t_n)_{n=1}^\infty$ has the \emph{lower regularity property} (LRP for short) if there is an integer $b\ge 2$ such
\begin{equation*}
2 t_m \le t_{bm}, \quad m\in\NN.
\end{equation*}
Given a sequence $\primt=(t_n)_{n=1}^\infty$ of positive scalars, we define its \emph{dual sequence} as
$
\primt^*=(n/t_n)_{n=1}^\infty
$.

\begin{lemma}\label{lem:DualURPInc}
Suppose that the sequence $\primt=(t_n)_{n=1}^\infty$ has the URP. Then $\primt^*$ is essentially increasing.
\end{lemma}

\begin{proof}
Use Lemma~\ref{lem:URP} to pick $0<a<1$ such that $(n^a/t_n)_{n=1}^\infty$ is essentially increasing. Then $\primt^*=(n^{1-a}\, n^a/t_n)_{n=1}^\infty$ is essentially increasing.
\end{proof}

The following lemma, which we record for further reference, is clear from definitions.
\begin{lemma}\label{lem:DualRegular}
A sequence has the URP if and only if its dual sequence has the LRP.
\end{lemma}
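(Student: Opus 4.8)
The plan is to chase the definitions directly; once both regularity conditions are written out for the same dilation factor $b$, they turn out to be one and the same inequality, so both implications fall out simultaneously. Write $\primt=(t_n)_{n=1}^\infty$ and let its dual sequence be $\primt^*=(s_n)_{n=1}^\infty$, where $s_n=n/t_n$; all terms are positive.

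First I would fix an integer $b\ge 2$ and spell out what the LRP asserts for $\primt^*$ with this particular $b$. By definition it is the requirement that $2 s_m\le s_{bm}$ for every $m\in\NN$. Substituting $s_m=m/t_m$ and $s_{bm}=bm/t_{bm}$ turns this into $2m/t_m\le bm/t_{bm}$.

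Next I would simplify. Cancelling the common positive factor $m$ and clearing the positive denominators, the inequality $2m/t_m\le bm/t_{bm}$ is equivalent to $2\, t_{bm}\le b\, t_m$, i.e.\ to $t_{bm}\le (b/2)\, t_m$ for every $m\in\NN$. This last inequality is precisely the URP for $\primt$ with dilation factor $b$ (after renaming the index $m$ as $n$). Since every manipulation used only the positivity of the quantities involved and is therefore reversible, the URP for $\primt$ with factor $b$ holds if and only if the LRP for $\primt^*$ with the same factor $b$ holds. Quantifying over $b$ then gives that $\primt$ has the URP (there exists such a $b$) exactly when $\primt^*$ has the LRP, and in fact the same integer $b$ witnesses both.

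There is no genuine obstacle here—as the authors note, the statement is clear from the definitions. The only point to watch is that the numerator $n$ in the dual sequence $n/t_n$ scales by exactly $b$ under the dilation $n\mapsto bn$, so it cancels cleanly against the factor $b$ appearing in the two regularity inequalities and leaves them literally identical.
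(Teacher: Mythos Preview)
Your proof is correct and is exactly the direct verification the paper has in mind; the authors give no explicit argument, merely noting that the lemma is ``clear from definitions.'' Your unwinding of the definitions---and the observation that the same integer $b$ witnesses both properties---matches this.
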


It will be convenient in some situations to use a different notation for Lorentz sequence spaces. Given $0<q<\infty$ and a weight $\uu=(u_n)_{n=1}^\infty$, we set
\[
\Vert f\Vert_{(\uu,q)}:=\left( \sum_{n=1}^\infty a_n^q u_n\right)^{1/q}, f\in c_0, \, f^*=(a_n)_{n=1}^\infty,
\]
and we denote by $d(\uu,q)$ the space consisting of all $f\in c_0$ such that $\Vert f\Vert_{(q,\uu)}<\infty$. The following lemma relates the spaces $d_{1,q}(\ww)$ and $d(\uu,q)$.

\begin{lemma}[see \cite{AABW2021}*{\S9.2}]\label{lem:ChangeScale}
Let $0<q<\infty$, and let $\ww=(w_n)_{n=1}^\infty$ and $\uu=(u_n)_{n=1}^\infty$ be weights with primitive sequences $(s_n)_{n=1}^\infty$ and $(t_n)_{n=1}^\infty$ respectively. Then $\Vert \cdot \Vert_{(\uu,q)}\approx \Vert \cdot \Vert_{1,q,\ww}$ if and only if $s_n^q\approx t_n$ for $n\in\NN$.
\end{lemma}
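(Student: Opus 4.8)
The plan is to reduce the statement to an elementary comparison of primitive sequences. First I would rewrite $\Vert\cdot\Vert_{1,q,\ww}$ as a Lorentz quasinorm with respect to a single weight: expanding its definition gives
\[
\Vert f\Vert_{1,q,\ww}^q=\sum_{n=1}^\infty (a_n s_n)^q\,\frac{w_n}{s_n}=\sum_{n=1}^\infty a_n^q\, s_n^{q-1} w_n,\qquad f^*=(a_n)_{n=1}^\infty,
\]
so that, setting $v_n:=s_n^{q-1}w_n$ and $\vv=(v_n)_{n=1}^\infty$, the quasinorm $\Vert\cdot\Vert_{1,q,\ww}$ coincides \emph{exactly} with $\Vert\cdot\Vert_{(\vv,q)}$. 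Denoting by $T_n=\sum_{k=1}^n v_k$ the primitive sequence of $\vv$, the lemma then splits into two independent pieces: (A) the estimate $T_n\approx s_n^q$, and (B) the general principle that, for weights with primitive sequences $(t_n)_{n=1}^\infty$ and $(T_n)_{n=1}^\infty$, one has $\Vert\cdot\Vert_{(\uu,q)}\approx\Vert\cdot\Vert_{(\vv,q)}$ if and only if $t_n\approx T_n$. Combining (A) and (B) yields $\Vert\cdot\Vert_{(\uu,q)}\approx\Vert\cdot\Vert_{1,q,\ww}$ if and only if $t_n\approx s_n^q$, which is the assertion.

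For (A) I would compare the sum defining $T_n$ with $\int_0^{s_n} x^{q-1}\,dx=s_n^q/q$, writing $w_k=s_k-s_{k-1}$ with $s_0:=0$. When $q\ge 1$ the function $x^{q-1}$ is nondecreasing, so on each interval $[s_{k-1},s_k]$ one has $s_k^{q-1}w_k\ge\int_{s_{k-1}}^{s_k}x^{q-1}\,dx$, and summing telescopes to $T_n\ge s_n^q/q$, while the crude bound $s_k^{q-1}\le s_n^{q-1}$ gives $T_n\le s_n^q$; when $0<q<1$ the same monotonicity estimates run in the opposite direction and yield $s_n^q\le T_n\le s_n^q/q$. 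In either case $T_n\approx s_n^q$ with constants depending only on $q$. I would emphasize that this step uses nothing beyond the monotonicity of $(s_n)_{n=1}^\infty$; the doubling hypothesis on $\prim$ is not needed here.

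For (B), the reverse implication is immediate: testing both quasinorms on $f=\sum_{k=1}^n\ee_k$, whose nonincreasing rearrangement is the indicator of $\{1,\dots,n\}$, gives $\Vert f\Vert_{(\uu,q)}^q=t_n$ and $\Vert f\Vert_{(\vv,q)}^q=T_n$, so an equivalence of the quasinorms forces $t_n\approx T_n$. For the forward implication I would use summation by parts. Since both quasinorms depend only on $f^*$, I may assume $f^*=(a_n)_{n=1}^\infty$ is finitely supported, nonincreasing and nonnegative; then
\[
\Vert f\Vert_{(\uu,q)}^q=\sum_{n} a_n^q\,(t_n-t_{n-1})=\sum_{n}(a_n^q-a_{n+1}^q)\,t_n,
\]
and likewise with $t_n$ replaced by $T_n$. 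As the increments $a_n^q-a_{n+1}^q$ are nonnegative, the hypothesis $t_n\approx T_n$ transfers directly to an equivalence of these two sums, hence of $\Vert f\Vert_{(\uu,q)}$ and $\Vert f\Vert_{(\vv,q)}$ on finitely supported sequences. The general case $f\in c_0$ follows by applying this to the truncations of $f^*$ and letting the truncation length tend to infinity, the passage to the limit being justified by monotone convergence since all terms are nonnegative.

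The computations are routine; the only point requiring care is the bookkeeping in the summation-by-parts step, namely reducing to finitely supported sequences so that the boundary term $a_N^q\,t_N$ disappears, and then recovering the full space $c_0$ by monotone convergence rather than by a direct termwise comparison of the weights $\uu$ and $\vv$, which is not available.
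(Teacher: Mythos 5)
Your proof is correct. A point of comparison: the paper itself offers no argument for this lemma at all --- it is stated as a known fact, quoted from \cite{AABW2021}*{\S 9.2} --- so what you have produced is a self-contained substitute rather than a variant of an in-paper proof. Your decomposition is the natural one and it works cleanly: the identity $\Vert\cdot\Vert_{1,q,\ww}=\Vert\cdot\Vert_{(\vv,q)}$ with $v_n=s_n^{q-1}w_n$ is exact; the comparison of $T_n=\sum_{k\le n}s_k^{q-1}w_k$ with $\int_0^{s_n}x^{q-1}\,dx$ correctly yields $s_n^q/q\le T_n\le s_n^q$ for $q\ge 1$ and $s_n^q\le T_n\le s_n^q/q$ for $0<q<1$, with constants depending only on $q$ and no doubling assumption; and your general principle (B) --- equivalence of the quasinorms $\Vert\cdot\Vert_{(\uu,q)}$ and $\Vert\cdot\Vert_{(\vv,q)}$ is the same as equivalence of the primitive sequences --- is proved correctly in both directions, testing on indicators $\sum_{k\le n}\ee_k$ one way and using Abel summation with the nonnegative increments $a_n^q-a_{n+1}^q$ the other way. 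The two details you flag are indeed the ones that need care, and you handle both: restricting to finitely supported nonincreasing sequences so the boundary term in the summation by parts vanishes (with $t_0=0$), and then passing to all of $c_0$ by monotone convergence of the truncations, which is the right move precisely because equivalence of primitives does not give termwise equivalence of the weights. The net effect of your write-up, beyond making the paper self-contained at this point, is that it makes visible that the equivalence constants depend only on $q$ and on the constants in $s_n^q\approx t_n$.
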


Although it is customary to designate Lorentz sequence spaces after the weight $\ww$, it must be conceded that, as Lemma~\ref{lem:ChangeScale} exhibits, they depend on its primitive sequence $(s_m)_{m=1}^\infty$ rather than on $\ww$.

\begin{lemma}\label{lem:LRPEquivalence}
Let $0<q<\infty$, and let $\ww=(w_n)_{n=1}^\infty$ be a weight whose primitive sequence $\prim=(s_n)_{n=1}^\infty$ has the LRP. Suppose also that the dual sequence $\prim^*$ is essentially increasing.
\begin{enumerate}[label=(\roman*), leftmargin=*,widest=ii]
\item\label{LRP:a} There is a non-increasing weight $\vv=(v_n)_{n=1}^\infty$ such that
\[
m v_m \approx \sum_{n=1}^m v_n \approx s_m, \quad m\in\NN.
\]
\item\label{LRP:b} If $\uu=(s_n^q/n)_{n=1}^\infty$, then $ \Vert \cdot \Vert_{1,q,\ww}\approx \Vert \cdot \Vert_{(\uu,q)}$.
\end{enumerate}
\end{lemma}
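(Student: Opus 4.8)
The plan is to reduce both parts of the lemma to the single estimate
\[
\sum_{k=1}^n \frac{s_k^\rho}{k} \approx s_n^\rho, \quad n\in\NN,
\]
valid for every fixed $\rho\in(0,\infty)$, with part \ref{LRP:b} being the case $\rho=q$ and part \ref{LRP:a} relying on the case $\rho=1$. I would establish this estimate first and extract the two statements as consequences. The two bounds draw on the two hypotheses separately.

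For the \emph{lower bound} I would use only that $\prim^*=(n/s_n)_{n=1}^\infty$ is essentially increasing. This gives a constant $C$ with $s_k/k \ge C^{-1} s_n/n$ whenever $k\le n$, whence
\[
\sum_{k=1}^n \frac{s_k^\rho}{k}=\sum_{k=1}^n k^{\rho-1}\left(\frac{s_k}{k}\right)^\rho \ge C^{-\rho}\left(\frac{s_n}{n}\right)^\rho\sum_{k=1}^n k^{\rho-1}\gtrsim \left(\frac{s_n}{n}\right)^\rho n^\rho=s_n^\rho,
\]
since $\sum_{k=1}^n k^{\rho-1}\approx n^\rho$ for $\rho>0$. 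For the \emph{upper bound} I would exploit the LRP. By Lemma~\ref{lem:DualRegular} the assumption that $\prim$ has the LRP is equivalent to $\prim^*$ having the URP, so Lemma~\ref{lem:URP} (the implication \ref{URP:1}$\Rightarrow$\ref{URP:2}, which needs no monotonicity) furnishes $0<a<1$ such that $(n^{a}s_n/n)_{n=1}^\infty=(s_n/n^{c})_{n=1}^\infty$ is essentially increasing, where $c=1-a\in(0,1)$. This yields the uniform power decay $s_k \lesssim (k/n)^c\, s_n$ for $k\le n$, so that
\[
\sum_{k=1}^n \frac{s_k^\rho}{k}\lesssim s_n^\rho\, n^{-c\rho}\sum_{k=1}^n k^{c\rho-1}\approx s_n^\rho\, n^{-c\rho}\, n^{c\rho}=s_n^\rho,
\]
using $\sum_{k=1}^n k^{c\rho-1}\approx n^{c\rho}$ (valid because $c\rho>0$). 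I expect this upper bound to be the crux: the real work is recognizing that the LRP, routed through Lemma~\ref{lem:DualRegular} and Lemma~\ref{lem:URP}, converts into a power-type decay that is uniform in $\rho$, which is precisely what makes the sum geometric-like and hence comparable to its largest block. Once the decay is in hand the summation is elementary.

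With the estimate in place, part \ref{LRP:b} is immediate: the primitive sequence of $\uu=(s_n^q/n)_{n=1}^\infty$ is $t_n=\sum_{k=1}^n s_k^q/k\approx s_n^q$, so the criterion $s_n^q\approx t_n$ of Lemma~\ref{lem:ChangeScale} (with exponent $q$) is met, giving $\Vert\cdot\Vert_{1,q,\ww}\approx\Vert\cdot\Vert_{(\uu,q)}$.

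Finally, for part \ref{LRP:a} I would construct $\vv$ by monotonizing $s_m/m$. Since $\prim^*$ is essentially increasing, it is equivalent to a non-decreasing sequence $(\tau_m)_{m=1}^\infty$; setting $v_m=1/\tau_m$ produces a non-increasing weight with $v_m\approx s_m/m$. Then $m v_m\approx s_m$ directly, while
\[
\sum_{n=1}^m v_n\approx \sum_{n=1}^m \frac{s_n}{n}\approx s_m
\]
by the case $\rho=1$ of the central estimate. This is exactly the required chain $m v_m\approx\sum_{n=1}^m v_n\approx s_m$.
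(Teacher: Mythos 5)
Your proof is correct; the difference from the paper lies in how part \ref{LRP:b} is reached. The paper needs only the case $\rho=1$ of your central estimate, namely $\sum_{n=1}^m s_n/n\approx s_m$, which it obtains by citing the ``moreover'' clause of Lemma~\ref{lem:URP} applied to $\prim^*$ (legitimate there since $\prim^*$ is essentially increasing and, by Lemma~\ref{lem:DualRegular}, has the URP); it then deduces \ref{LRP:b} \emph{from} \ref{LRP:a} by an algebraic identity: if $(t_n)_{n=1}^\infty$ is the primitive sequence of the monotonized weight $\vv$, then $\Vert f\Vert_{1,q,\vv}^q=\sum_n a_n^q\, t_n^{q-1}v_n$ (with $(a_n)_{n=1}^\infty=f^*$) and $t_n^{q-1}v_n\approx s_n^{q-1}\,(s_n/n)=s_n^q/n$, whence $\Vert\cdot\Vert_{1,q,\ww}\approx\Vert\cdot\Vert_{1,q,\vv}\approx\Vert\cdot\Vert_{(\uu,q)}$. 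You instead prove the summation estimate directly at the exponent $\rho=q$ --- lower bound from $\prim^*$ essentially increasing, upper bound from the power decay $s_k\lesssim (k/n)^c s_n$ extracted via Lemma~\ref{lem:URP}\ref{URP:2} --- so that \ref{LRP:b} becomes a one-line application of Lemma~\ref{lem:ChangeScale}, independent of \ref{LRP:a}; your treatment of \ref{LRP:a} itself (monotonizing $(s_n/n)_{n=1}^\infty$ and invoking the case $\rho=1$) coincides with the paper's. Both arguments are sound, and your bookkeeping is right: the implied constants depend on $\rho$, but $\rho$ is fixed in each application, while the decay exponent $c$ is not. What your route buys: it is more self-contained (you re-derive the Hardy-type bound rather than cite it), it decouples the two parts, and it yields the strictly more general estimate for every $\rho>0$; note this generality could not be had by simply applying Lemma~\ref{lem:URP} to the sequence $(n/s_n^\rho)_{n=1}^\infty$, which need not be essentially increasing when $\rho>1$ (take $s_n=n$, $\rho=2$), so your direct computation is genuinely needed for it. What the paper's route buys is brevity, by maximal reuse of Lemmas~\ref{lem:URP} and \ref{lem:ChangeScale} already in place.
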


\begin{proof}
Pick a non-increasing weight $\vv$ equivalent to $(s_n/n)_{n=1}^\infty$. By Lemma~\ref{lem:DualRegular}, $\prim^*$ has the URP. Applying Lemma~\ref{lem:URP} to $\prim^*$ we obtain
\[
m v_m \approx s_m\approx \sum_{n=1}^m \frac{s_n}{n} \approx t_m:=\sum_{n=1}^m v_n , \quad m\in\NN.
\]
Consequently, $\Vert \cdot \Vert_{1,q,\ww}\approx \Vert \cdot \Vert_{1,q,\vv}$. In turn, since
\[
t_n^{q-1}v_n\approx s_n^{q-1} \frac{s_n}{n}=\frac{s_n^q}{n}, \quad n\in\NN,
\]
we have $\Vert \cdot \Vert_{1,q,\vv}\approx \Vert \cdot \Vert_{(\vv,q)}$.
\end{proof}

The \emph{discrete Hardy operator} $\HD\colon\FF^\NN\to \FF^\NN$ is defined by
\[
f=(a_n)_{n=1}^\infty \mapsto \HD(f):= \left( \frac{1}{m} \sum_{n=1}^m a_n \right)_{m=1}^\infty.
\]

\begin{lemma}\label{lem:BHO}
Let $1<q<\infty$ and let $\ww$ be a weight whose primitive sequence has the URP. Then
\[
\Vert f \Vert_{1,q,\ww} \approx \Vert \HD(f^*) \Vert_{1,q,\ww}, \quad f\in \FF^\NN.
\]
\end{lemma}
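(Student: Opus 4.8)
The plan is to reduce the claim to a weighted discrete Hardy inequality on the cone of non-increasing sequences, and then to prove that inequality by combining a Hölder estimate (powered by the regularity that the URP provides through Lemma~\ref{lem:URP}) with an Abel-summation argument that converts the monotonicity of $f^*$ into a comparison of partial sums. First I would reduce the statement. Since $\Vert\cdot\Vert_{1,q,\ww}$ depends only on the non-increasing rearrangement, and since the averages of a non-increasing sequence are again non-increasing, $\HD(f^*)$ coincides with its own decreasing rearrangement. Writing $f^*=(a_n)_{n=1}^\infty$, $b_n=\frac1n\sum_{k=1}^n a_k$, and $\mu_n:=s_n^{q-1}w_n$, the asserted equivalence becomes $\sum_n b_n^q\mu_n\approx\sum_n a_n^q\mu_n$. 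The inequality $\Vert f\Vert_{1,q,\ww}\le\Vert\HD(f^*)\Vert_{1,q,\ww}$ costs nothing: monotonicity of $(a_n)$ gives $a_n\le b_n$ for every $n$. All the content lies in the reverse estimate $\sum_n b_n^q\mu_n\lesssim\sum_n a_n^q\mu_n$.

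For the reverse estimate I would first run a Hölder argument. Splitting $a_k=\bigl(a_k s_k^{(q-1)/q}\bigr)\,s_k^{-(q-1)/q}$ and applying Hölder with exponents $q$ and $q'$ gives $\sum_{k\le n}a_k\le\bigl(\sum_{k\le n}a_k^q s_k^{q-1}\bigr)^{1/q}\bigl(\sum_{k\le n}s_k^{-1}\bigr)^{1/q'}$, the exponent on the second factor collapsing to $1$. The key input here is Lemma~\ref{lem:URP}\ref{URP:3}: as $(s_n)$ is (literally) increasing and has the URP, $\sum_{k\le n}s_k^{-1}\approx n/s_n$. Substituting and simplifying the powers yields the pointwise bound $b_n^q\lesssim n^{-1}s_n^{-(q-1)}\sum_{k\le n}a_k^q s_k^{q-1}$. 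Multiplying by $\mu_n$ and interchanging the order of summation then produces $\sum_n b_n^q\mu_n\lesssim\sum_k a_k^q s_k^{q-1}T_k$, where $T_k:=\sum_{n\ge k}w_n/n$.

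The main obstacle is that one cannot finish termwise: the desired conclusion would follow at once from $T_k\lesssim w_k$, but this is false as soon as $\ww$ is irregular, and it is precisely this irregularity of $\ww$ (which is invisible to $(s_n)$) that forces the monotonicity of $f^*$ to be used globally rather than coordinatewise. I would invoke the elementary principle that for a non-increasing sequence $(c_k)$ with $c_k\to0$ one has $\sum_k c_k X_k\lesssim\sum_k c_k Y_k$ whenever $\sum_{k\le K}X_k\lesssim\sum_{k\le K}Y_k$ for every $K$, which follows by Abel summation. Applying this with $c_k=a_k^q$, $X_k=s_k^{q-1}T_k$ and $Y_k=s_k^{q-1}w_k$ reduces everything to the weight-only comparison $\sum_{k\le K}s_k^{q-1}T_k\lesssim\sum_{k\le K}s_k^{q-1}w_k$ for all $K$.

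It remains to verify this comparison, which I expect to be the delicate point. The right-hand side is $\approx s_K^q$ by a Riemann–Stieltjes comparison (upper bound $s_K^{q-1}\sum_{k\le K}w_k=s_K^q$; lower bound $s_K^q/q$ from $\sum_{k\le K}s_k^{q-1}(s_k-s_{k-1})\ge\int_0^{s_K}t^{q-1}\,dt$, using $q>1$). After a Fubini interchange, the left-hand side splits into the range $n\le K$, bounded trivially by $\sum_{n\le K}s_n^{q-1}w_n\approx s_K^q$, and the tail $n>K$, which contributes $\lesssim K s_K^{q-1}\sum_{n>K}w_n/n$; here the URP enters through the power bound $s_m/s_n\lesssim(m/n)^a$ with $a<1$ from Lemma~\ref{lem:URP}\ref{URP:2}, giving $\sum_{n>K}w_n/n\lesssim s_K/K$ and hence a tail contribution $\lesssim s_K^q$. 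Thus the monotone cone converts the failed termwise comparison into a comparison of partial sums governed by the sub-power growth of $(s_n)$, with $q>1$ (through the Hölder exponent) and the URP jointly responsible for closing the argument.
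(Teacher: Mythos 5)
Your proof is correct, but it is not the paper's proof: the paper disposes of this lemma in one line, combining \cite{CRS2007}*{Corollary 1.3.9} (boundedness of the discrete Hardy operator on weighted Lorentz spaces over the cone of non-increasing sequences) with Lemma~\ref{lem:ChangeScale}, which transfers the statement between the $\Vert\cdot\Vert_{1,q,\ww}$ and $\Vert\cdot\Vert_{(\uu,q)}$ scales, and Lemma~\ref{lem:URP}, which converts the URP into the weight condition the cited result requires. What you wrote is in substance a self-contained proof of that cited inequality, i.e.\ an Ari\~no--Muckenhoupt-type $B_q$ argument: the H\"older step powered by $\sum_{k\le n}s_k^{-1}\lesssim n/s_n$, the Abel-summation transfer from the monotone cone to the weight-only comparison $\sum_{k\le K}s_k^{q-1}T_k\lesssim s_K^q\approx\sum_{k\le K}s_k^{q-1}w_k$, and the tail estimate $\sum_{n>K}w_n/n\lesssim s_K/K$ obtained from $s_m/s_n\lesssim(m/n)^a$ with $a<1$ are all sound; I checked the exponent arithmetic in $b_n^q\lesssim n^{-1}s_n^{-(q-1)}\sum_{k\le n}a_k^q s_k^{q-1}$, and your diagnosis that the termwise bound $T_k\lesssim w_k$ fails (the irregularity of $\ww$ is invisible to $\prim$, so monotonicity of $f^*$ must be used globally) is exactly right. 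The trade-off is clear: the citation route is short and rigorous by reference; your route makes Lemma~\ref{lem:BHO} self-contained and displays precisely where $q>1$ and the URP enter. If you were to write this up, two asserted steps deserve one more line each: the tail bound $\sum_{n>K}w_n/n\lesssim s_K/K$ (sum over dyadic blocks $2^jK<n\le 2^{j+1}K$ and use $a<1$), and the Abel-summation principle itself, which holds for any non-negative non-increasing $(c_k)$ once the boundary term $c_N\sum_{k\le N}X_k$ is retained, so the hypothesis $c_k\to0$ is not even needed.
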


\begin{proof}
Just combine \cite{CRS2007}*{Corollary 1.3.9} with Lemmas~\ref{lem:URP} and \ref{lem:ChangeScale}.
\end{proof}

Given $0<q\le\infty$ and a weight $\ww=(w_n)_{n=1}^\infty$, $\Vert \cdot \Vert_{1,q,\ww}$ is a quasi-norm if and only if the primitive sequence $(s_n)_{n=1}^\infty$ of $\ww$ is doubling, in which case $d_{1,q}(\ww)$ is a quasi-Banach space (see \cite{CRS2007}*{Theorem 2.2.13}). We have $d_{1,q}(\ww)=c_0$ if and only if $\prim$ is bounded. Apart from this trivial case, the unit vector system $\UVS=(\ee_n)_{n=1}^\infty$ generates the whole space $d_{1,q}(\ww)$
if and only if $q<\infty$, in which case $\UVS$ is a boundedly complete basis. It is known when $d_{1,q}(\ww)$ is locally convex, i.e., a Banach space.

\begin{theorem}[see \cite{CRS2007}*{Theorems~2.5.10 and 2.5.11}]\label{thm:LC}
Let $\ww$ be a weight whose primitive sequence $\prim$ is doubling.
\begin{enumerate}[label=(\roman*),leftmargin=*,widest=iii]
\item Given $0<q<1$, $d_{1,q}(\ww)$ is locally convex if and only if $\prim$ is bounded.
\item $d_{1,1}(\ww)$ is locally convex if and only if $\prim^*$ is essentially increasing.
\item Given $1<q\le \infty$, $d_{1,q}(\ww)$ is locally convex if and only $\prim$ has the URP.
\end{enumerate}
\end{theorem}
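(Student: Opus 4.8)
I would use the standard criterion that a quasi-Banach lattice is locally convex if and only if there is a constant $C$ with $\big\Vert\sum_{i=1}^N f_i\big\Vert\le C\sum_{i=1}^N\Vert f_i\Vert$ for every finite family, uniformly in $N$; equivalently, the quasi-norm is comparable to the Minkowski functional of the convex hull of its unit ball. Because $d_{1,q}(\ww)$ is symmetric, the first move is to transfer every such inequality to the cone of nonincreasing nonnegative sequences via the Hardy--Littlewood--P\'olya submajorization, which in the present notation reads $\HD((f+g)^*)\le \HD(f^*)+\HD(g^*)$ pointwise. On that cone, Lemma~\ref{lem:ChangeScale} identifies $\Vert\cdot\Vert_{1,q,\ww}$ with the weighted $\ell_q$-(quasi-)norm $u\mapsto(\sum_n u_n^q\, s_n^{q-1}w_n)^{1/q}$, which is subadditive exactly when $q\ge 1$ (Minkowski). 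This already isolates why the three regimes behave differently.

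\textbf{Sufficiency.} For $1<q<\infty$ under the URP, Lemma~\ref{lem:BHO} gives $\Vert f\Vert_{1,q,\ww}\approx\rho(f):=\Vert\HD(f^*)\Vert_{1,q,\ww}$. Since $\HD(f^*)$ is nonincreasing, submajorization together with the monotonicity and Minkowski's inequality for the weighted $\ell_q$-norm yields $\rho(f+g)\le\Vert\HD(f^*)+\HD(g^*)\Vert_{1,q,\ww}\le\rho(f)+\rho(g)$, so $\rho$ is a norm equivalent to the quasi-norm. The case $q=\infty$ is identical with the weighted sup-norm and the evident $\infty$-analogue of Lemma~\ref{lem:BHO}. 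For $q=1$ under the hypothesis that $\prim^*$ is essentially increasing, that condition says $s_m/m$ is essentially decreasing, i.e.\ $\prim$ is essentially quasiconcave and hence comparable to its least concave majorant $\widehat{\prim}=(\widehat s_m)$; the associated weight $\widehat w_m=\widehat s_m-\widehat s_{m-1}$ is nonincreasing, and Abel summation $\sum_n f^*_n\widehat w_n=\sum_m\big(\sum_{n\le m}f^*_n\big)(\widehat w_m-\widehat w_{m+1})$ exhibits $f\mapsto\sum_n f^*_n\widehat w_n$ as a positive combination of the subadditive functionals $f\mapsto\sum_{n\le m}f^*_n$, hence a norm; it is equivalent to $\Vert\cdot\Vert_{1,1,\ww}$ because $\widehat s_m\approx s_m$. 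Finally, for $0<q<1$ with $\prim$ bounded one has $d_{1,q}(\ww)=c_0$, already a Banach space.

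\textbf{Necessity.} I would argue contrapositively, producing disjointly supported test families that defeat the uniform convexity inequality. For $1<q\le\infty$, if the URP fails then by Lemma~\ref{lem:URP} no sequence $(n^a/s_n)$ is essentially increasing, so there are arbitrarily long dyadic scales along which $\sum_{n}s_n^{q-1}w_n$ is not comparable to $s^{q}$; stacking indicators of such blocks makes $\Vert\sum_i f_i\Vert_{1,q,\ww}/\sum_i\Vert f_i\Vert_{1,q,\ww}$ unbounded. For $q=1$, failure of the condition on $\prim^*$ means $s_m/m$ is not essentially decreasing, so $\prim$ is not comparable to any concave sequence and the Lorentz functional admits no equivalent norm. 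For $0<q<1$ with $\prim$ unbounded, one splits a long indicator into finitely many disjoint blocks along which $s^q$ roughly doubles, and uses the strict $q$-subadditivity of the weighted $\ell_q$-quasi-norm to obtain a convexity ratio of order $N^{1/q-1}\to\infty$.

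\textbf{Main obstacle.} The soft half is sufficiency, which follows from submajorization, Minkowski's inequality, and the Hardy-operator equivalence of Lemma~\ref{lem:BHO} (or concavification at $q=1$). The genuine work lies in the necessity direction: for each failure of the regularity hypothesis one must exhibit explicit finite families realizing an unbounded convexity ratio, and calibrate them so that the threshold is sharp---the full strength of the URP for $q>1$, the strictly weaker monotonicity of $s_m/m$ at $q=1$, and mere unboundedness at $q<1$. Matching these thresholds is exactly where the quantitative content of Lemma~\ref{lem:URP} and the doubling of $\prim$ enter.
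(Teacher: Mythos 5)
Two preliminary remarks. First, the paper itself offers no proof of Theorem~\ref{thm:LC}: it is quoted from \cite{CRS2007}*{Theorems 2.5.10 and 2.5.11}, so your proposal can only be compared with the standard proof there. Second, your sufficiency half essentially \emph{is} that standard proof and is sound: for $1<q<\infty$, Lemma~\ref{lem:BHO} plus the submajorization $\HD((f+g)^*)\le \HD(f^*)+\HD(g^*)$ and Minkowski's inequality show that $f\mapsto\Vert\HD(f^*)\Vert_{1,q,\ww}$ is an equivalent norm; the $q=\infty$ case follows the same way using Lemma~\ref{lem:URP}; the $q=1$ case via the least concave majorant of $\prim$ is classical; and for $0<q<1$ with $\prim$ bounded the space is $c_0$.

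The necessity half, however, rests on a false premise and on test families that provably cannot work. Your central claim for $1<q\le\infty$ --- that failure of the URP produces scales on which $\sum_{n\le m}s_n^{q-1}w_n$ is not comparable to $s_m^q$ --- is wrong: for every increasing $\prim$ and every $q\ge 1$ one has
\[
\frac{s_m^q}{q}=\int_0^{s_m}x^{q-1}\,dx\le\sum_{n=1}^m s_n^{q-1}w_n\le s_m^q,
\]
with no regularity hypothesis at all (this is precisely why the fundamental function of $d_{1,q}(\ww)$ is always equivalent to $\prim$, as the paper records). Consequently ``stacking indicators'' never detects failure of local convexity, in any of the three regimes. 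A concrete counterexample: take $w_n\equiv 1$, so $s_m=m$ and $d_{1,q}(\ww)=\ell_{1,q}$. For $q>1$ the URP fails, and for $0<q<1$ the primitive is unbounded, so the theorem asserts non-local-convexity in both cases; yet for any disjoint sets $A_1,\dots,A_N$ one has $\Vert\sum_{i}\Ind_{A_i}\Vert_{1,q,\ww}\approx\sum_i|A_i|\approx\sum_i\Vert\Ind_{A_i}\Vert_{1,q,\ww}$, so every indicator family --- including the block splittings you propose for case (i) --- has bounded convexity ratio. The defect of convexity in these spaces lives only on non-flat vectors. The standard repair is the averaging-over-rearrangements argument: if $\rho$ is any equivalent norm and $f\ge 0$ is supported in $[1,m]$, averaging the $m$ cyclic shifts of $f$ inside $[1,m]$ (all of which have the same quasi-norm, by symmetry) gives
\[
\frac{s_m}{m}\sum_{n\le m}f_n^*\;\le\; C\,\Vert f\Vert_{1,q,\ww}.
\]
Testing this with $f=\Ind_{[1,k]}$, $k\le m$, yields $s_m/m\lesssim s_k/k$, which is exactly the necessity in (ii) --- a step your proposal asserts without any argument (``admits no equivalent norm'' is the conclusion, not a reason). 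Necessity in (iii) requires, beyond this inequality, the weighted-Hardy/$B_q$ machinery behind Lemma~\ref{lem:BHO} (this is how \cite{CRS2007} proceeds), with decreasing test vectors such as $f^*=(1/s_n)_{n\le m}$; and for (i) the correct witnesses are blocks of \emph{varying} heights $1/s_{m_k}$ over sets on which $\prim$ doubles, which give ratio $\approx K^{1/q-1}$, rather than pieces of a single indicator.
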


In the case when $0<q\le 1$ or $q=\infty$, the dual spaces of the separable part of Lorentz sequence spaces has been clearly identified. We denote by $\ls_{1,\infty}(\ww)$ the subspace of $d_{1,\infty}(\ww)$ generated by the unit vectors. Let $m(\ww)$ be the Marcinkiewicz space consisting of all $f\in c_0$ whose non-increasing rearrangement $f^*=(a_n)_{n=1}^\infty$ satisfies
\[
\Vert f\Vert_{m(\ww)}=\sup_m \frac{1}{s_m}\sum_{n=1}^m a_n<\infty,
\]
where $\prim=(s_n)_{n=1}^\infty$ is the primitive sequence of $\ww$. By Lemmas~\ref{lem:URP} and
\ref{lem:DualRegular}, if $\prim$ has the LRP and $\ww^*$ is a weight whose primitive sequence is equivalent to $\prim^*$, then $m(\ww)=d_{1,\infty}(\ww^*)$. If $\prim$ is bounded, then $m(\ww)=\ell_1$. In turn, if $\prim^*$ is bounded, then $m(\ww)=c_0$.

\begin{theorem}[see \cite{CRS2007}*{Theorems 2.4.14 and 2.5.10}]\label{thm:dualA}
Let $\ww$ be a weight whose primitive sequence $\prim$ is doubling and unbounded.
\begin{enumerate}[label=(\roman*), leftmargin=*, widest=ii]
\item\label{dualA:a} If $0<q\le 1$, then $(d_{1,q}(\ww))^*=m(\ww)$; and
\item $(\ls_{1,\infty}(\ww))^*=d_{1,1}(1/\prim)$.
\end{enumerate}
\end{theorem}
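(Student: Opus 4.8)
The plan is to realise both duals concretely as sequence spaces and then identify them by a matching pair of inequalities. Since $\prim$ is unbounded we have $d_{1,q}(\ww)\neq c_0$, and (with $q<\infty$ in part (i), respectively after passing to the separable part $\ls_{1,\infty}(\ww)$ in part (ii)) the excerpt guarantees that the unit vector system $\UVS=(\ee_n)_{n=1}^\infty$ is a boundedly complete symmetric Schauder basis. Hence every functional $\phi$ in the relevant dual is determined by the sequence $g=(b_n)_{n=1}^\infty$, $b_n=\phi(\ee_n)$, acting as $\phi(f)=\sum_n c_n b_n$ on $f=\sum_n c_n\ee_n$; and it suffices to prove that $\phi$ is bounded if and only if $g$ lies in the claimed space, with two-sided comparison of norms. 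Throughout, $a=f^*$ and $b^*$ denote the non-increasing rearrangements of $(|c_n|)$ and $|g|$.

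For part (i) I would first show that $m(\ww)$ sits inside the dual. The Hardy--Littlewood rearrangement inequality gives $|\phi(f)|\le\sum_n a_n b_n^*$. The analytic heart is a discrete Hardy inequality: with $B_m:=\sum_{n\le m}b_n^*\le\Vert g\Vert_{m(\ww)}\,s_m$ and $a$ non-increasing, Abel summation gives for every $N$
\[
\sum_{n=1}^N a_n b_n^* = a_N B_N+\sum_{n=1}^{N-1}(a_n-a_{n+1})B_n\le\Vert g\Vert_{m(\ww)}\sum_{n=1}^N a_n w_n,
\]
where the last equality uses the exact finite identity $\sum_{n\le N}a_nw_n=a_Ns_N+\sum_{n<N}(a_n-a_{n+1})s_n$; letting $N\to\infty$ bounds $|\phi(f)|$ by $\Vert g\Vert_{m(\ww)}\Vert f\Vert_{1,1,\ww}$. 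Since $q\le1$, the nested scale $d_{1,q}(\ww)\subseteq d_{1,1}(\ww)$ recalled in the introduction yields $\Vert f\Vert_{1,1,\ww}\lesssim\Vert f\Vert_{1,q,\ww}$, so $g$ defines a bounded functional with $\Vert\phi\Vert\lesssim\Vert g\Vert_{m(\ww)}$. For the reverse inclusion I would test $\phi$ on the extremal vectors $f=\sum_{n\in A}\varepsilon_n\ee_n$ with $|A|=m$, where $A$ collects the positions of the $m$ largest $|b_n|$ and the unimodular $\varepsilon_n$ align phases so that $\phi(f)=\sum_{n\le m}b_n^*$. Symmetry gives $\Vert f\Vert_{1,q,\ww}=\ff[d_{1,q}(\ww)](m)\approx s_m$, the last equivalence being the fundamental-function computation recorded in the introduction. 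Thus $\frac{1}{s_m}\sum_{n\le m}b_n^*\lesssim\Vert\phi\Vert$ for every $m$, i.e.\ $g\in m(\ww)$ with $\Vert g\Vert_{m(\ww)}\lesssim\Vert\phi\Vert$.

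Part (ii) follows the same template with the weight inverted; recall $\Vert f\Vert_{1,\infty,\ww}=\sup_n a_ns_n$ and that the $d_{1,1}(1/\prim)$-norm of $g$ is $\sum_n b_n^*/s_n$. For $f\in\ls_{1,\infty}(\ww)$ the condition $\Vert f\Vert_{1,\infty,\ww}\le1$ reads exactly $a_n\le 1/s_n$ for all $n$, so rearrangement gives $|\phi(f)|\le\sum_n a_nb_n^*\le\sum_n b_n^*/s_n$, whence $d_{1,1}(1/\prim)$ embeds contractively into $(\ls_{1,\infty}(\ww))^*$. For the reverse inclusion the extremal test vector is $f=\sum_{n=1}^N s_n^{-1}\varepsilon_n\ee_{\pi(n)}$, where $\pi$ lists the positions in order of decreasing $|b_n|$ and the $\varepsilon_n$ align phases; since $(1/s_n)$ is non-increasing one has $a_n=1/s_n$ for $n\le N$, hence $\Vert f\Vert_{1,\infty,\ww}=\max_{n\le N}a_ns_n=1$, while $\phi(f)=\sum_{n=1}^N b_n^*/s_n$. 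Letting $N\to\infty$ gives $\sum_n b_n^*/s_n\le\Vert\phi\Vert$, closing the identification.

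The four inequalities above are, individually, routine rearrangement-and-Abel arguments; the point that genuinely needs care is structural, and concerns part (i) for $0<q<1$. There $d_{1,q}(\ww)$ need not be locally convex (Theorem~\ref{thm:LC}), so one cannot take for granted that the topological dual coincides with the associate space $m(\ww)$ computed by Köthe duality, nor even that the dual is large enough to recover $m(\ww)$. This is exactly where the basis realisation of the first paragraph is essential: because $\UVS$ is a boundedly complete Schauder basis, every continuous $\phi$ is recovered from its coefficient sequence $(\phi(\ee_n))$, so the two inequalities do identify the \emph{full} dual rather than merely a Köthe-theoretic proxy. The only other place the standing hypotheses enter is the fundamental-function equivalence $\ff[d_{1,q}(\ww)](m)\approx s_m$ used in the lower bound, which rests on the doubling of $\prim$.
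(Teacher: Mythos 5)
The paper contains no proof of this statement at all --- it is imported verbatim from \cite{CRS2007}*{Theorems 2.4.14 and 2.5.10} --- so there is no internal argument to measure yours against. Your derivation is the classical direct one, and its four estimates are correct as far as they go: the Hardy--Littlewood rearrangement inequality combined with Abel summation (and the scale inclusion $\Vert \cdot\Vert_{1,1,\ww}\le\Vert \cdot\Vert_{1,q,\ww}$ for $q\le 1$) for the two upper bounds, and extremal test vectors against the fundamental function $\ff[d_{1,q}(\ww)](m)\approx s_m$ for the two lower bounds, all fed through the Schauder-basis realisation of the dual.

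The genuine gap sits exactly at the step you single out as the crux, namely the claim that your two inequalities ``identify the full dual''. For an arbitrary bounded $\phi$, your test vectors yield a bound on the rearrangement of $g=(\phi(\ee_n))_{n=1}^\infty$: $\sup_m s_m^{-1}\sum_{n\le m}b_n^*\lesssim\Vert\phi\Vert$ in (i), and $\sum_n b_n^*/s_n\le\Vert\phi\Vert$ in (ii). But membership in $m(\ww)$, resp.\ $d_{1,1}(1/\prim)$, \emph{as these spaces are defined in this paper} requires in addition $g\in c_0$ --- and your blanket use of $b^*$ as a rearrangement by permutation tacitly presupposes it --- yet boundedness of $\phi$ does not imply it under the stated hypotheses. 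Concretely, take $\ww=(1,1,1,\dots)$, so that $\prim=(m)_{m=1}^\infty$ is doubling and unbounded: then $d_{1,1}(\ww)=\ell_1$, and the summation functional $f\mapsto\sum_n c_n$ is bounded with coefficient sequence $(1,1,\dots)\notin c_0$, while the paper's $m(\ww)$ is exactly $c_0$ (as the paper itself records whenever $\prim^*$ is bounded); since $c_0$ is not even isomorphic to a dual space, part (i) as literally stated is false for this weight, so no argument can close your last step. Likewise, if $s_m=m^2$ then $\sum_n 1/s_n<\infty$ and the constant sequence defines a bounded functional on $\ls_{1,\infty}(\ww)$ lying outside $c_0$, so (ii) fails as stated. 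The statement is true --- and your proof becomes complete --- either with the conventions of \cite{CRS2007}, where rearrangements are taken in the distribution-function sense and the target spaces live inside $\ell_\infty$ rather than $c_0$, or under the extra hypothesis that $\prim^*$ is unbounded in (i) (then your estimate forces $b_n^*\to0$, hence $g\in c_0$), resp.\ that $\sum_n 1/s_n=\infty$ in (ii). As written, the final membership claims do not follow from your inequalities, and you should flag this discrepancy between the quoted statement and the paper's own definitions.
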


In the case when $1<q<\infty$, such a simple characterization of the dual space of $d_{1,q}(\ww)$ is not possible in general (see \cite{Garling1969}*{Theorem 1} and \cite{CRS2007}*{Theorems 2.4.14 and 2.5.10}). Notwithstanding, it can be identified to be a Lorentz sequence space under certain regularity conditions.

\begin{theorem}[cf.\ \cite{Allen1978}*{Theorem 1.1}]\label{thm:dualB}
Let $1<q<\infty$, and let $\ww$ be a weight whose primitive sequence $\prim=(s_n)_{n=1}^\infty$ has both the URP and the LRP. Then $(d_{1,q}(\ww))^*=d_{1,q'}(\ww^*)$, where $\ww^*$ is a weight whose primitive sequence is equivalent to $\prim^*$.
\end{theorem}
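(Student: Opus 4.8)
The plan is to identify the dual of $d_{1,q}(\ww)$ with a Lorentz sequence space by passing to the alternative description $d(\uu,q)$ and exploiting the regularity hypotheses to run a Hölder-type duality argument. Since $\prim$ has the URP and the LRP, Lemma~\ref{lem:DualURPInc} and Lemma~\ref{lem:DualRegular} guarantee that $\prim^*$ is essentially increasing and has the URP, so both $d_{1,q}(\ww)$ and the candidate dual $d_{1,q'}(\ww^*)$ are genuine Banach spaces by Theorem~\ref{thm:LC}. First I would normalize the setup: using Lemma~\ref{lem:LRPEquivalence}\ref{LRP:b}, write $\Vert\cdot\Vert_{1,q,\ww}\approx\Vert\cdot\Vert_{(\uu,q)}$ with $\uu=(s_n^q/n)_{n=1}^\infty$, and similarly express the $\ww^*$-norm via Lemma~\ref{lem:ChangeScale} as a $d(\vv,q')$ norm with $\vv$ determined by $\prim^*$. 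This reduces the statement to computing the dual of the classical Lorentz space $d(\uu,q)$, where the weight $\uu$ is now decreasing (up to equivalence) thanks to the regularity.

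The core of the argument is the classical duality for $d(\uu,q)$ spaces with decreasing weights. For the easy inclusion I would show $d_{1,q'}(\ww^*)\hookrightarrow (d_{1,q}(\ww))^*$ by exhibiting, for each $g\in d_{1,q'}(\ww^*)$, that the functional $f\mapsto\sum_n \xx_n^*$-style pairing $\sum_n a_n b_n$ (where $a=\Fou(f)$, $b=\Fou(g)$) is bounded; the key estimate is a Hölder inequality matched to the weights, namely $\sum_n |a_n b_n|\le \Vert f\Vert_{(\uu,q)}\Vert g\Vert_{(\uu^{-q'/q},q')}$ after verifying that the conjugate weight $\uu^{1-q'}$ reproduces $\prim^*$ up to equivalence. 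Here one checks $u_n^{-q'/q}\approx (n/s_n^q)^{q'/q}$ and confirms that its primitive sequence is equivalent to $(\prim^*)^{q'}$, which is exactly where the URP of $\prim^*$ (equivalently the LRP of $\prim$) is consumed, via Lemma~\ref{lem:URP}\ref{URP:3} applied to control partial sums of the reciprocal weights.

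For the reverse inclusion — that every bounded functional arises from an element of $d_{1,q'}(\ww^*)$ — the standard route is to test an arbitrary $\phi\in(d_{1,q}(\ww))^*$ against the unit vectors, set $b_n:=\phi(\ee_n)$, and prove $(b_n)_{n=1}^\infty\in d_{1,q'}(\ww^*)$ by a level-set or block-summation argument: one plugs in finitely supported, suitably weighted rearrangements of $b$ as test vectors and optimizes. The Hardy-operator equivalence in Lemma~\ref{lem:BHO} is the tool that lets one replace $f^*$ by its Cesàro averages $\HD(f^*)$ without changing the norm up to constants, thereby converting the awkward non-increasing-rearrangement constraint in the definition of $\Vert\cdot\Vert_{1,q,\ww}$ into a genuinely pointwise-weighted $\ell_q$ expression amenable to the classical $\ell_q$--$\ell_{q'}$ duality. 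I expect the main obstacle to be precisely this reverse direction: verifying that the extremal test sequences remain admissible (i.e.\ that one may reduce to non-increasing $b$ and that the relevant sums converge) and that the constants produced by the doubling, URP, and LRP estimates are uniform. The regularity hypotheses enter crucially to ensure that the two Hardy-type inequalities — the averaging operator bounded on $d_{1,q}(\ww)$ and its adjoint bounded on the dual scale — both hold, so that the formal Hölder duality is in fact an isomorphism rather than a mere one-sided embedding; assembling these two bounded Hardy estimates into a norm equivalence $(d_{1,q}(\ww))^*=d_{1,q'}(\ww^*)$ is the step I would write out most carefully.
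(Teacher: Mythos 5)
There is a genuine gap, and it sits exactly where Theorem~\ref{thm:dualB} goes beyond Allen's result. Your reduction asserts that, thanks to the regularity hypotheses, the weight $\uu$ in a representation $d_{1,q}(\ww)=d(\uu,q)$ may be taken non-increasing up to equivalence, so that the ``classical duality for $d(\uu,q)$ spaces with decreasing weights'' applies. That assertion is false. Lemma~\ref{lem:LRPEquivalence}\ref{LRP:a} makes the weight $\ww$ (whose primitive is $\approx\prim$) non-increasing, but the weight $\uu=(s_n^q/n)_{n=1}^\infty$ from Lemma~\ref{lem:LRPEquivalence}\ref{LRP:b} must have primitive $\approx(s_n^q)_{n=1}^\infty$ by Lemma~\ref{lem:ChangeScale}, and a non-increasing weight can never have a superlinear primitive. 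Concretely, if $s_n=n^a$ with $1/q<a<1$, then $\prim$ has both the URP and the LRP, yet $d_{1,q}(\ww)=\ell_{1/a,q}$ admits no representation $d(\uu,q)$ with $\uu$ non-increasing, since any admissible $\uu$ would need $\sum_{k\le n}u_k\approx n^{aq}$ with $aq>1$. This is precisely the situation the paper singles out when it remarks that Theorem~\ref{thm:dualB} is strictly more general than \cite{Allen1978}*{Theorem 1.1}; your plan collapses the new cases onto the old theorem, which does not cover them.

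Even granting the reduction, the reverse inclusion --- that every $\phi\in(d_{1,q}(\ww))^*$ has coefficient sequence in $d_{1,q'}(\ww^*)$ --- is only sketched, and the fix you point to does not work. With $b_n=\phi(\ee_n)$ non-increasing, the extremal H\"older sequence $a_n=b_n^{q'-1}u_n^{-q'/q}=b_n^{q'-1}t_n^{q'}/n$ is in general \emph{not} non-increasing (again, $t_n^{q'}/n$ may increase), and since $\uu$ itself is not essentially decreasing, $\Vert a\Vert_{(\uu,q)}$, computed from $a^*$, may be far larger than $\bigl(\sum_n a_n^q u_n\bigr)^{1/q}$; so testing $\phi$ against rearrangements of $b$ and optimizing yields no lower bound for $\Vert\phi\Vert$ in terms of $\Vert b\Vert_{1,q',\ww^*}$. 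Lemma~\ref{lem:BHO} applied on the primal side cannot repair this: $f\mapsto\HD(f^*)$ is nonlinear and does not intertwine with the pairing $\sum_n a_nb_n$. What is actually needed is a duality principle for the cone of non-increasing sequences (Sawyer's principle, equivalently the level-function description): the supremum of $\sum_n a_nb_n$ over non-increasing $a$ with $\Vert a\Vert_{(\uu,q)}\le1$ is comparable to a conjugate Lorentz norm of $\HD(b^*)$. That is a theorem in its own right, and it is exactly what the paper imports wholesale from \cite{CRS2007}*{Theorems 2.4.14 and 2.5.10}; once it is cited, the paper's proof is purely computational --- identify the conjugate weight as $\approx(t_n^{q'}/n)_{n=1}^\infty$, recognize $d_{1,q'}(\ww^*)$ via Lemmas~\ref{lem:LRPEquivalence} and \ref{lem:ChangeScale}, and delete the Hardy operator with Lemma~\ref{lem:BHO} applied to $\ww^*$, which is where the LRP of $\prim$ is consumed. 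Your H\"older half-inclusion and weight computation are essentially sound (though note the estimate $\sum_{n\le m}t_n^{q'}/n\approx t_m^{q'}$ uses the LRP of $\prim^*$, equivalently the URP of $\prim$, not the URP of $\prim^*$ as you state), but without Sawyer-type duality, or a proof of it, the proposal does not establish the theorem.
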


\begin{proof}
Set $t_n=n/s_n$ for all $n\in\NN$. By Lemma~\ref{lem:LRPEquivalence}\ref{LRP:a} we can assume without lost of generality that $w_n \approx 1/t_n$ for $n\in\NN$. By Lemma~\ref{lem:ChangeScale} and \cite{CRS2007}*{Theorems 2.4.14 and 2.5.10}, the dual space of $d_{1,q}(\ww)$ is, under the natural pairing, the space
\[
\YY=\{ f\in\FF^\NN \colon \HD(f^*)\in d(\uu,q')\},
\]
where the weight $\uu=(u_n)_{n=1}^\infty$ is given by $u_1=s_1^{-q'} $ and
\[
u_n=n^{q'}\left(s_{n-1}^{-q'}-s_n^{-q'}\right), \quad n\ge 2.
\]
The equivalence $x^{-q'}-1\approx 1-x$ for $0< x \le 1$ yields
\[
u_n \approx n^{q'} s_n^{-q'-1} w_n \approx t_n^q \frac{1}{n} , \quad n\in\NN.
\]
Hence, applying Lemma~\ref{lem:LRPEquivalence}\ref{LRP:b} to $\ww^*$ and $q'$ gives
\[
\left( \sum_{n=1}^m u_m\right)^{1/q'} \approx \ff[d_{1,q'}(\ww^*)] \approx t_m, \quad m\in\NN.
\]
Therefore, by Lemma~\ref{lem:ChangeScale}, $ d(\uu,q')=d_{1,q'}(\ww^*)$. Applying Lemma~\ref{lem:BHO} puts an end to the proof.
\end{proof}

We emphasize that Allen \cite{Allen1978} dealt with Lorentz sequence spaces $d(\uu,q)$ with $\uu$ non-increasing. Since there are weights $\ww$ such that $d_{1,q}(\ww)$ can not be expressed as $d(\uu,q)$ with $\uu$ non-increasing, Theorem~\ref{thm:dualB} is more general that \cite{Allen1978}*{Theorem 1.1}. Similarly, we will study superreflexivity for a broader class of spaces than that dealt with by Altshuler \cite{Altshuler1975}. To address this task, we will take advantage of the lattice structure these spaces are equipped with, which allows us to consider $p$-convexifications of Lorentz sequence spaces. Recall that, given a quasi-Banach lattice $\YY$ and $0<p<\infty$, the $p$-convexified space $\YY^{(p)}$ is the quasi-Banach lattice defined by the quasi-norm
$f\mapsto \Vert |f|^p \Vert^{1/p}$.

\begin{lemma}\label{lem:SLSConvexified}
Let $\ww$ be a weight whose primitive sequence $\prim$ is doubling, and let $p$, $q\in(0,\infty)$. Then,
\[
\left( d_{1,q}(\ww)\right)^{(p)}=d_{1,pq}(\ww_{1/p})
\]
up to an equivalent quasi-norm, where $\ww_{1/p}$ is the weight whose primitive sequence is $\prim^{1/p}$.
\end{lemma}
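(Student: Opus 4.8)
The plan is to reduce everything to the alternative scale $d(\uu,q)$ of Lorentz sequence spaces, on which $p$-convexification acts transparently, and then translate back and forth using Lemma~\ref{lem:ChangeScale}. The starting observation is that for $p>0$ the nonincreasing rearrangement commutes with the power map $t\mapsto t^p$: if $f^*=(a_n)_{n=1}^\infty$ then $(|f|^p)^*=(a_n^p)_{n=1}^\infty$. Combined with the definition $\Vert g\Vert_{\YY^{(p)}}=\Vert |g|^p\Vert_\YY^{1/p}$ of the convexified quasi-norm, this gives $\Vert f\Vert_{(d_{1,q}(\ww))^{(p)}}=\Vert |f|^p\Vert_{1,q,\ww}^{1/p}$, a quantity depending only on $(a_n^p)_{n=1}^\infty$.

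First I would record the clean behaviour of convexification on the scale $d(\uu,q)$. Straight from the definition $\Vert f\Vert_{(\uu,q)}=\left(\sum_n a_n^q u_n\right)^{1/q}$ one computes $\Vert f\Vert_{(d(\uu,q))^{(p)}}=\Vert |f|^p\Vert_{(\uu,q)}^{1/p}=\left(\sum_n a_n^{pq}u_n\right)^{1/(pq)}=\Vert f\Vert_{(\uu,pq)}$. Thus $(d(\uu,q))^{(p)}=d(\uu,pq)$ with the \emph{same} weight $\uu$ and the second index merely multiplied by $p$; this is the step that makes convexification trivial on this scale.

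Next, fix the weight $\uu=(u_n)_{n=1}^\infty$ whose primitive sequence is $(s_n^q)_{n=1}^\infty$, which is legitimate since $(s_n^q)$ is increasing (because $\prim$ is) and doubling (because $\prim$ is). Lemma~\ref{lem:ChangeScale}, applied with index $q$, yields $\Vert\cdot\Vert_{1,q,\ww}\approx\Vert\cdot\Vert_{(\uu,q)}$, since the primitive of $\uu$ equals $s_n^q$. Combining this with the two previous displays gives $\Vert f\Vert_{(d_{1,q}(\ww))^{(p)}}\approx\Vert f\Vert_{(\uu,pq)}$. To translate $d(\uu,pq)$ back into a space of the form $d_{1,pq}(\cdot)$, note that the primitive sequence of $\ww_{1/p}$ is $\prim^{1/p}=(s_n^{1/p})_{n=1}^\infty$, so its $pq$-th power is $(s_n^q)_{n=1}^\infty$, which is exactly the primitive of $\uu$. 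Hence Lemma~\ref{lem:ChangeScale}, now applied with index $pq$, gives $\Vert\cdot\Vert_{(\uu,pq)}\approx\Vert\cdot\Vert_{1,pq,\ww_{1/p}}$, and chaining the two equivalences proves $(d_{1,q}(\ww))^{(p)}=d_{1,pq}(\ww_{1/p})$ up to an equivalent quasi-norm.

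I expect no serious obstacle here: the argument is essentially bookkeeping with primitive sequences and exponents, organized so that each translation is a single invocation of Lemma~\ref{lem:ChangeScale}. The only points demanding care are verifying that $\prim^{1/p}$ and $(s_n^q)$ are genuine primitive sequences (increasing and doubling, so that the cited lemma and the quasi-Banach structure apply) and confirming that rearrangement commutes with $t\mapsto t^p$; both are routine.
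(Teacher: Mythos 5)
Your proposal is correct and takes essentially the same route as the paper: the paper's own proof consists precisely of the observation that $(d(\uu,q))^{(p)}=d(\uu,pq)$ for any weight $\uu$, followed by an appeal to Lemma~\ref{lem:ChangeScale}, which is exactly your argument with the two invocations of that lemma (with indices $q$ and $pq$) and the rearrangement bookkeeping written out in full. No gaps.
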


\begin{proof}
It is clear that $(d(\uu,q))^{(p)}=d(\uu,pq)$ for every weight $\uu$. Then, the result follows from Lemma~\ref{lem:ChangeScale}.
\end{proof}

\begin{proposition}\label{prop:pconvex}
Let $1<q<\infty$, and let $\ww$ be a weight whose primitive sequence $\prim$ has the URP. Then $d_{1,q}(\ww)$ is a $p$-convex lattice for some $p>1$.
\end{proposition}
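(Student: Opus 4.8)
The plan is to reduce the $p$-convexity of $d_{1,q}(\ww)$ to the local convexity of one of its convexifications, and then to obtain that local convexity from Theorem~\ref{thm:LC} after boosting the URP via Lemma~\ref{lem:ImprovedURP}. I would use the standard principle that a quasi-Banach lattice $\YY$ is $p$-convex if and only if its $(1/p)$-convexification $\YY^{(1/p)}$ is locally convex. In the direction I need, this is a direct computation: testing the defining inequality of local convexity of $\YY^{(1/p)}$ on the vectors $g_i=|f_i|^p$, and using that the quasi-norm of $\YY^{(1/p)}$ sends $g$ to $\Vert\,|g|^{1/p}\Vert_\YY^{\,p}$, turns that inequality precisely into the $p$-convexity inequality for $\YY$, with the constant transformed by a $p$-th root.

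Before applying this, I would check the two standing hypotheses needed downstream. Since $\prim=(s_n)_{n=1}^\infty$ is a primitive sequence of a weight it is increasing, hence essentially increasing; and the URP bound $s_{bn}\le (b/2)s_n$ combined with the monotonicity of $\prim$ forces $s_{2m}/s_m\le b/2$, so $\prim$ is doubling and $d_{1,q}(\ww)$ is a genuine quasi-Banach lattice. With $\prim$ essentially increasing and enjoying the URP, Lemma~\ref{lem:ImprovedURP} produces $p_0>1$ such that $\prim^p$ has the URP for every $1<p<p_0$. I would then fix $p$ with $1<p<\min\{p_0,q\}$, which is possible precisely because $q>1$.

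Next I would identify the convexification. Applying Lemma~\ref{lem:SLSConvexified} with exponent $1/p$ gives $(d_{1,q}(\ww))^{(1/p)}=d_{1,q/p}(\ww_p)$, up to an equivalent quasi-norm, where $\ww_p$ is the weight whose primitive sequence is $\prim^p$. The choice $p<q$ ensures $q/p>1$, and $\prim^p$ inherits the doubling property from $\prim$, so the hypotheses of Theorem~\ref{thm:LC}(iii) are met; since $p<p_0$, the sequence $\prim^p$ has the URP, and that theorem yields that $d_{1,q/p}(\ww_p)$ is locally convex. Feeding this back through the convexity principle shows that $d_{1,q}(\ww)$ is $p$-convex, completing the argument. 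I expect the only genuine subtlety to be the convexity principle itself, where one must track quasi-norm constants carefully under the substitution $g_i=|f_i|^p$; once that is in place, the remainder is routine bookkeeping with the URP and the convexification identity of Lemma~\ref{lem:SLSConvexified}.
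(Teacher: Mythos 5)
Your proposal is correct and follows essentially the same route as the paper's proof: the same three ingredients (Lemma~\ref{lem:ImprovedURP} to get $1<p<\min\{p_0,q\}$ with $\prim^p$ having the URP, Theorem~\ref{thm:LC} for local convexity of $d_{1,q/p}(\ww_p)$, and Lemma~\ref{lem:SLSConvexified} to identify the convexification) combine in the same way. The only cosmetic difference is direction: the paper $p$-convexifies the locally convex space $d_{1,q/p}(\ww_p)$ to recover $d_{1,q}(\ww)$, whereas you pass to the $(1/p)$-convexification of $d_{1,q}(\ww)$ and invoke the equivalence between $p$-convexity of $\YY$ and local convexity of $\YY^{(1/p)}$ --- the same fact read backwards.
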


\begin{proof}
By Lemma~\ref{lem:ImprovedURP}, there is $1<p<q$ be such that $\prim^p$ has the URP. Let $\ww_p$ be the weight whose primitive sequence is $\prim^p$. By Theorem~\ref{thm:LC}, $d_{1,q/p}(\ww_p)$ is locally convex, i.e., a $1$-convex quasi-Banach lattice. Hence, $(d_{1,q/p}(\ww_p))^{(p)}$ is a $p$-convex lattice. Applying Lemma~\ref{lem:SLSConvexified} puts an end to the proof.
\end{proof}

\begin{proposition}\label{prop:Marloo}
Let $\ww$ be a weight whose primitive sequence $\prim=(s_n)_{n=1}^\infty$ is unbounded. Then the unit vector system of $m(\ww)$ has a block basic sequence equivalent to the unit vector system of $\ell_\infty$.
\end{proposition}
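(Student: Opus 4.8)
The plan is to exhibit the block basic sequence explicitly as suitably normalized indicators supported on consecutive blocks of integers, and to check $\ell_\infty$-equivalence directly. Throughout I would use that $m(\ww)$ is a symmetric sequence space, so its norm depends only on the non-increasing rearrangement and satisfies the lattice bound $|f|\le|g|\Rightarrow\Vert f\Vert\le\Vert g\Vert$. The first preliminary step is to record the norm of an indicator: for a finite $A$ with $|A|=N$, the rearrangement of $\Ind_A:=\sum_{n\in A}\ee_n$ is a string of $N$ ones, so
\[
\Vert \Ind_A\Vert_{m(\ww)}=\sup_m\frac{\min\{m,N\}}{s_m}=\sup_{1\le j\le N}\frac{j}{s_j}=:\theta(N),
\]
a non-decreasing function of $N$ with $\theta(N)\ge N/s_N$, equivalently $N/\theta(N)\le s_N$.

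Next I would choose the blocks. Since $\prim$ is unbounded, $s_m\to\infty$, so I may pick integers $N_1<N_2<\cdots$ with $s_{N_{k+1}}\ge 2s_{N_k}$ for all $k$ and take $A_1,A_2,\dots$ to be disjoint consecutive intervals of $\NN$ with $|A_k|=N_k$. Setting $c_k:=1/\theta(N_k)$ and $y_k:=c_k\,\Ind_{A_k}$ makes each $y_k$ normalized, and $(y_k)_k$ is a block basic sequence of the unit vector system. The lower estimate is then immediate: for finitely many scalars $(b_k)$, disjointness of supports gives $\bigl|\sum_k b_k y_k\bigr|\ge|b_{k_0}|\,|y_{k_0}|$ pointwise for each fixed $k_0$, whence $\Vert\sum_k b_k y_k\Vert\ge|b_{k_0}|$ by the lattice property, and therefore $\Vert\sum_k b_k y_k\Vert\ge\max_k|b_k|$.

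For the upper estimate I would dominate everything by the envelope $\sum_k c_k\Ind_{A_k}$: if $\max_k|b_k|\le 1$ then $\bigl|\sum_k b_k y_k\bigr|\le\sum_k c_k\Ind_{A_k}$, so it suffices to prove $\sup_n\Vert\sum_{k=1}^n c_k\Ind_{A_k}\Vert_{m(\ww)}\le 3$. As $\theta$ is non-decreasing we have $c_1\ge c_2\ge\cdots$, so the rearrangement $a$ of $\sum_{k\le n}c_k\Ind_{A_k}$ lists $c_1$ with multiplicity $N_1$, then $c_2$ with multiplicity $N_2$, and so on. For $m$ in the $l$-th block, i.e.\ $M_{l-1}<m\le M_l$ with $M_l:=N_1+\cdots+N_l$, I split off the current block from the earlier ones:
\[
\frac{1}{s_m}\sum_{j=1}^m a_j=\frac{1}{s_m}\sum_{i=1}^{l-1}c_i N_i+\frac{c_l(m-M_{l-1})}{s_m}.
\]
Writing $p:=m-M_{l-1}\le N_l$ and using $s_m\ge s_p$ together with $p/s_p\le\theta(p)\le\theta(N_l)=1/c_l$ bounds the second term by $1$; using $c_iN_i=N_i/\theta(N_i)\le s_{N_i}$, the geometric growth $\sum_{i\le l-1}s_{N_i}\le 2s_{N_{l-1}}$, and $s_m\ge s_{N_{l-1}}$ bounds the first term by $2$. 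Hence $\frac{1}{s_m}\sum_{j\le m}a_j\le 3$ for every $m$, which finishes the verification and yields $\max_k|b_k|\le\Vert\sum_k b_k y_k\Vert\le 3\max_k|b_k|$.

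The one genuinely delicate point is that the supremum defining the Marcinkiewicz norm must be controlled at \emph{every} index $m$, not merely at the block endpoints $M_l$; the decomposition above handles this by treating the partially filled current block separately and exploiting the monotonicity of $\theta$ and of $\prim$. I therefore expect this uniform estimate — and the choice of geometrically growing $s_{N_k}$, which turns the earlier blocks' cumulative contribution into a convergent geometric series — to be the crux of the argument, with the lower bound and the reduction to the envelope being routine consequences of the lattice structure.
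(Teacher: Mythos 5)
Your proof is correct, and its core strategy coincides with the paper's: build a block basic sequence of normalized indicators over consecutive intervals whose sizes $N_k$ are chosen so that $(s_{N_k})_k$ grows geometrically, so that the earlier blocks' contribution to the Marcinkiewicz supremum is summed as a geometric series while the partially filled current block is absorbed by the normalization. The execution differs, though, and your version is cleaner in two respects. The paper first reduces, without loss of generality, to the case where the dual sequence $\prim^*=(n/s_n)_{n=1}^\infty$ is unbounded (when it is bounded, $m(\ww)=c_0$ and the unit vectors themselves work), and then chooses the blocks so that, besides the growth condition $\sum_{j<k}s_{n_j}\le(\lambda-1)s_{n_k}$, the ratios $n_k/s_{n_k}$ are non-decreasing, normalizing each block by $s_{m_k}/m_k$; the final verification is left as ``a routine check.'' Your normalization by the exact indicator norm $\theta(N)=\sup_{j\le N}j/s_j$ renders both of those preliminary steps unnecessary: $\theta$ is monotone by construction, $\Vert y_k\Vert=1$ exactly, and the two inequalities $p/s_p\le\theta(p)$ and $N/\theta(N)\le s_N$ are precisely what the two halves of your estimate need, so the argument runs uniformly with no case distinction on $\prim^*$, and you actually write out the check the paper omits. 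The only trade-off is quantitative: with growth ratio $2$ you obtain $3$-equivalence, whereas the paper's arbitrary $\lambda>1$ gives $\lambda$-equivalence for every $\lambda>1$; your argument recovers that refinement verbatim by taking a sufficiently large growth ratio in place of $2$. One pedantic point: the supremum defining the norm must also be inspected at indices $m$ beyond the support of the finite sum, but there the numerator is constant while $s_m$ is non-decreasing, so your bound persists.
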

\begin{proof}
Assume without lost of generality that $\prim^*$ is unbounded. Pick an arbitrary $\lambda>1$. We recursively construct a sequence $(n_k)_{k=1}^\infty$ such that
\[
\sum_{j=1}^{k-1} s_{n_j} \le (\lambda-1) s_{n_{k}}, \quad
\]
and $(n_k/s_{n_k})_{k=1}^\infty$ is non-decreasing. Set, with the convention $m_0=0$,
\[
\xx_k= \frac{s_{m_k}}{m_k} \sum_{n=1+m_{k-1}}^{m_k} \ee_n, \quad k\in\NN.
\]
A routine check gives that the block basic sequence $(\xx_k)_{k=1}^\infty$ is $\lambda$-equivalent to the unit vector system of $\ell_\infty$.
\end{proof}

To properly understand the following result, we must concede that we can define reflexivity (or superreflexivity) on quasi-Banach spaces, but reflexive spaces end up being locally convex.

\begin{theorem}\label{thm:LorentzReflexive}
Let $\ww$ be a weight whose primitive sequence $\prim$ is doubling, and let $0<q\le \infty$. Then $d_{1,q}(\ww)$ is reflexive if and only if $1<q<\infty$ and $\prim$ has the URP.
\end{theorem}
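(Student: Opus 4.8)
The plan is to characterize reflexivity of $d_{1,q}(\ww)$ by separately ruling out the degenerate endpoint cases and then establishing the forward and backward implications in the genuinely nonconvex/convex range. First I would dispose of the trivial and endpoint obstructions using the structural facts already recorded above. Recall that if $\prim$ is bounded then $d_{1,q}(\ww)=c_0$, which is not reflexive; so I may assume $\prim$ is unbounded. If $q=\infty$, then $\UVS$ does not generate $d_{1,\infty}(\ww)$, and moreover the quotient/subspace $\ls_{1,\infty}(\ww)$ already fails reflexivity (it has dual $d_{1,1}(1/\prim)$ by Theorem~\ref{thm:dualA}), so the space is nonreflexive; this forces $q<\infty$ in any reflexive case. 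If $0<q<1$, then by Theorem~\ref{thm:LC}(i) the space fails to be locally convex once $\prim$ is unbounded, and a nonlocally convex space cannot be reflexive; so I may assume $1\le q<\infty$.

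\emph{The implication (reflexive $\Rightarrow$ URP).} I would argue by contraposition: suppose $1\le q<\infty$ but $\prim$ does \emph{not} have the URP, and produce a copy of a nonreflexive space. The natural obstruction comes through the dual. The cleanest route is to show that when $\prim$ lacks the URP the Marcinkiewicz-type behavior intrudes: I would invoke Proposition~\ref{prop:Marloo}, which guarantees (when $\prim^*$ is unbounded) that $m(\ww)$ contains a block basic sequence equivalent to the unit vector basis of $\ell_\infty$, hence $m(\ww)$ is nonreflexive. For $0<q\le 1$ the dual of $d_{1,q}(\ww)$ is exactly $m(\ww)$ by Theorem~\ref{thm:dualA}\ref{dualA:a}, so a nonreflexive dual forces $d_{1,q}(\ww)$ nonreflexive, settling $q=1$. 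For $1<q<\infty$ without URP I would instead exploit local convexity: Theorem~\ref{thm:LC}(iii) says $d_{1,q}(\ww)$ is \emph{not} locally convex precisely when $\prim$ fails the URP, so again it cannot be reflexive. Thus failure of URP kills reflexivity across the whole range $1\le q<\infty$.

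\emph{The implication (URP and $1<q<\infty$ $\Rightarrow$ reflexive).} Here I would use the duality theorem together with a $p$-convexity/superreflexivity argument. First, the URP of $\prim$ is self-improving to the dual side: by Lemma~\ref{lem:DualURPInc} and Lemma~\ref{lem:DualRegular}, $\prim^*$ is essentially increasing and carries the LRP, and in the reflexive range one checks $\prim$ in fact enjoys both URP and LRP (the LRP for $\prim$ is equivalent, via Lemma~\ref{lem:DualRegular}, to URP for $\prim^*$, which holds). With both regularity properties in hand, Theorem~\ref{thm:dualB} identifies $(d_{1,q}(\ww))^*=d_{1,q'}(\ww^*)$ and, applying it once more (and using that $\prim^*$ also has URP and LRP), identifies the bidual back as $d_{1,q}(\ww)$ under the canonical pairing; tracking the pairing shows the canonical embedding is onto, giving reflexivity. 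The quantitative upgrade to superreflexivity would come from Proposition~\ref{prop:pconvex}: since $\prim$ has the URP and $1<q<\infty$, $d_{1,q}(\ww)$ is $p$-convex for some $p>1$, and the dual space $d_{1,q'}(\ww^*)$ is likewise $p'$-convex for some $p'>1$, so both the space and its dual are $p$-convex with a nontrivial upper estimate, which yields uniform convexifiability.

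The main obstacle I expect lies in the backward direction, specifically in propagating the regularity properties cleanly from $\prim$ to $\prim^*$ and back, and in making the biduality computation genuinely surjective rather than merely an isomorphic inclusion. Theorem~\ref{thm:dualB} gives the dual as a Lorentz space only up to equivalence of the defining weight's primitive sequence, so iterating it to compute the bidual requires care that $(\prim^*)^*\approx\prim$ and that the natural pairings compose to the canonical map on $d_{1,q}(\ww)$; verifying the canonical embedding is surjective (not just isometric into) is the crux. The endpoint bookkeeping ($q=1$ versus $q>1$, bounded versus unbounded $\prim$ and $\prim^*$) is routine but must be handled exhaustively so that no nonreflexive case slips through, and the passage from $p$-convexity of both the space and its dual to an equivalent uniformly convex norm is the standard lattice-theoretic fact I would cite rather than reprove.
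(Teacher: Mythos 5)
Your ``only if'' direction is fine and is essentially the paper's own (equally terse) argument: bounded $\prim$ gives $c_0$; the case $0<q<1$, and the case $1<q<\infty$ without the URP, are excluded by Theorem~\ref{thm:LC} because a reflexive quasi-Banach space must be locally convex; $q=1$ is excluded through $(d_{1,1}(\ww))^*=m(\ww)$ and Proposition~\ref{prop:Marloo}; and $q=\infty$ through $\ls_{1,\infty}(\ww)$ and Theorem~\ref{thm:dualA}.

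The ``if'' direction, however, contains a genuine gap. You invoke Theorem~\ref{thm:dualB} to identify $(d_{1,q}(\ww))^*$ and then the bidual, but that theorem requires $\prim$ to have \emph{both} the URP and the LRP, while the present theorem assumes only the URP. Your attempted repair --- ``the LRP for $\prim$ is equivalent, via Lemma~\ref{lem:DualRegular}, to URP for $\prim^*$, which holds'' --- is circular: what follows from the URP of $\prim$ is that $\prim^*$ is essentially increasing (Lemma~\ref{lem:DualURPInc}) and has the \emph{LRP} (Lemma~\ref{lem:DualRegular}); the URP of $\prim^*$ is, by Lemma~\ref{lem:DualRegular} applied to $\prim^*$, equivalent to the LRP of $\prim$, which is exactly the missing property. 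The gap is not cosmetic. Take $s_m=1+\log m$, so that $w_m\approx 1/m$: this $\prim$ is doubling and has the URP but fails the LRP, so the theorem asserts that $d_{1,q}(\ww)$ is reflexive for every $1<q<\infty$, yet Theorem~\ref{thm:dualB} is unavailable there, and --- as the paper notes via Garling's result --- in this regime the dual of $d_{1,q}(\ww)$ need not be a Lorentz sequence space at all, so the bidual computation collapses. Worse, your closing step (the space and its dual both $p$-convex, hence uniformly convexifiable) would prove \emph{superreflexivity}, which by the Altshuler-type theorem immediately following this one is genuinely false for URP-without-LRP weights such as $s_m=1+\log m$; so any argument along those lines cannot work under the stated hypotheses. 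The paper's proof avoids duality altogether: by Proposition~\ref{prop:pconvex} the URP alone makes $d_{1,q}(\ww)$ a $p$-convex lattice for some $p>1$; hence every block basic sequence of the unit vector system $\UVS$ is dominated by the $\ell_p$ basis, so none is equivalent to the $\ell_1$ basis; James's theorem then makes $\UVS$ shrinking, and since $\UVS$ is also boundedly complete, reflexivity follows. If you want to keep a duality-flavored proof, you would first need a description of $(d_{1,q}(\ww))^*$ valid under the URP alone (involving the discrete Hardy operator, as in Garling's work), which is substantially more than Theorem~\ref{thm:dualB} provides.
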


\begin{proof}
Combining Theorem~\ref{thm:LC}, Theorem~\ref{thm:dualA} and Proposition~\ref{prop:Marloo} yields the only if part. Assume that $1<q<\infty$ and $\prim$ has the URP. By Proposition~\ref{prop:pconvex}, there is $p>1$ such that the unit vector system of $\ell_p$ dominates every block basic sequence of the unit vector system of $\YY:=d_{1,q}(\ww)$. Consequently, no block basic sequence of the unit vector system of $\YY$ is equivalent to the unit vector system of $\ell_1$. By a classical theorem of James (see \cite{AlbiacKalton2016}*{Theorem 3.3.1}), the unit vector system is a shrinking basis of $\YY$. Since it is also boundedly complete, applying \cite{AlbiacKalton2016}*{Theorem 3.2.19} gives that $\YY$ is reflexive.
\end{proof}

To study superreflexivity in Lorentz sequence spaces, we will use the following important result.
\begin{theorem}[see \cite{LinTza1979}*{\S1.f}]\label{thm:SRLattice}
Let $\YY$ be a Banach lattice. Then, the following are equivalent.
\begin{enumerate}[label=(\roman*),leftmargin=*,widest=iii]
\item There are $1<q<r<\infty$ such that $\YY$ is a $p$-convex lattice and a $r$-concave lattice.
\item $\YY$ is a superreflexive Banach space.
\item There is $q>1$ such that $\YY$ has Rademacher type $q$.
\end{enumerate}
\end{theorem}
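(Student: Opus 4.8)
The plan is to establish the cyclic chain of implications (i) $\Rightarrow$ (ii) $\Rightarrow$ (iii) $\Rightarrow$ (i), leaning on the classical theory of convexity and concavity in Banach lattices together with the Maurey--Pisier and Krivine machinery, all of which is developed in \cite{LinTza1979}*{\S1.d--1.f}. For the implication (i) $\Rightarrow$ (ii), suppose $\YY$ is $q$-convex and $r$-concave with $1<q<r<\infty$. The key tool is the lattice renorming theorem asserting that $q$-convexity with $q>1$, combined with $r$-concavity with $r<\infty$, forces $\YY$ to admit an equivalent lattice norm that is simultaneously uniformly convex and uniformly smooth. Granting this, Enflo's characterization of superreflexivity recalled in Section~\ref{sect:UC} (namely, that a Banach space is superreflexive precisely when it carries an equivalent uniformly convex norm) immediately gives that $\YY$ is superreflexive.

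For (ii) $\Rightarrow$ (iii) I would again invoke Enflo \cite{Enflo1972} to renorm $\YY$ uniformly convexly, and then apply Pisier's renorming theorem: a uniformly convexifiable space can be given an equivalent norm whose modulus of convexity is of power type, equivalently it has martingale type $s>1$ for some $s$. Since martingale type $s$ implies Rademacher type $s$, this yields (iii) with the type index equal to $s$.

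The substantive step is (iii) $\Rightarrow$ (i), and it splits into producing a convexity index and a concavity index from the single hypothesis of nontrivial type. On the convexity side, Rademacher type $q>1$ makes $\YY$ $B$-convex, so by the Maurey--Pisier theorem $\YY$ does not contain the $\ell_1^n$ uniformly; invoking Krivine's theorem to transfer this to the lattice setting, the lattice convexity characterization gives that $\YY$ is $q_0$-convex for some $q_0>1$. On the concavity side, I would first pass from type to cotype: because $\ell_1^m$ embeds isometrically into $\ell_\infty^{2^m}$, any space containing the $\ell_\infty^n$ uniformly would contain the $\ell_1^m$ uniformly and hence have trivial type; consequently $\YY$ has finite cotype and does not contain the $\ell_\infty^n$ uniformly, whence the lattice concavity characterization yields that $\YY$ is $r_0$-concave for some $r_0<\infty$. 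Finally, since a nontrivial infinite-dimensional lattice that is $q_0$-convex and $r_0$-concave necessarily satisfies $q_0\le r_0$, and since convexity (resp.\ concavity) is inherited by smaller (resp.\ larger) indices, one relaxes the indices to reach strict inequalities $1<q<r<\infty$, which is exactly (i).

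I expect the concavity half of (iii) $\Rightarrow$ (i) to be the main obstacle: extracting a finite concavity index from nothing but nontrivial type requires first manufacturing finite cotype and then converting the non-containment of the $\ell_\infty^n$ into a genuine lattice estimate, and it is precisely here that the Maurey--Krivine lattice results carry the full weight of the argument. The remaining directions are comparatively soft, resting on Enflo's theorem and Pisier's renorming, both of which can be quoted from \cite{LinTza1979} and \cite{AlbiacKalton2016}.
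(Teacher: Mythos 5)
The paper does not actually prove Theorem~\ref{thm:SRLattice}: it quotes it wholesale from \cite{LinTza1979}*{\S 1.f}, so your sketch must be measured against the classical argument assembled there. Your architecture matches it in large part: (i) $\Rightarrow$ (ii) via the Figiel--Johnson lattice renorming theorem (\cite{LinTza1979}*{Theorem 1.f.1}) plus Enflo's characterization, and the concavity half of (iii) $\Rightarrow$ (i) via the observation that $\ell_1^m$ sits isometrically inside $\ell_\infty^{2^m}$, so nontrivial type forbids uniform copies of $\ell_\infty^n$, whence the lattice theorem ``finite cotype $\Leftrightarrow$ $r$-concave for some $r<\infty$'' (\cite{LinTza1979}*{Theorem 1.f.12}, where Krivine's theorem does the real work) applies. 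One minor slip in (ii) $\Rightarrow$ (iii): a power-type modulus of \emph{convexity} is equivalent to martingale \emph{cotype}, not martingale type; martingale type corresponds to a power-type modulus of \emph{smoothness}. This is repairable either by using that superreflexivity is self-dual (so $\YY$ also admits a uniformly smooth renorming) or, more simply, by noting that superreflexivity prevents $\ell_1$ from being finitely representable in $\YY$ and quoting Pisier's $B$-convexity theorem.

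The genuine gap is the convexity half of (iii) $\Rightarrow$ (i). You invoke a ``lattice convexity characterization'' that would convert non-containment of the $\ell_1^n$'s directly into $q_0$-convexity for some $q_0>1$. No such characterization exists as a counterpart of Theorem 1.f.12: non-containment of $\ell_1^n$'s is \emph{not} equivalent to nontrivial convexity, since $c_0$ is $p$-convex with constant $1$ for every $p$ and yet contains the $\ell_1^n$'s uniformly (isometrically, inside $\ell_\infty^{2^n}$). The one-directional implication you need --- a $B$-convex Banach lattice is $p$-convex for some $p>1$ --- is true, but it is essentially the hard content of the theorem itself, and Krivine's theorem applied to $\YY$ does not deliver it. The classical proof obtains the convexity index by dualizing: type $p>1$ of $\YY$ gives cotype $p'<\infty$ of $\YY^*$ (the elementary direction of type--cotype duality); $\YY^*$ is again a Banach lattice, so Theorem 1.f.12 applied to $\YY^*$ makes it $s$-concave for some $s<\infty$; then the convexity--concavity duality for lattices (the same result \cite{LinTza1979}*{Theorem 1.f.7} this paper invokes elsewhere) makes $\YY^{**}$ $s'$-convex with $s'>1$, and $\YY$ inherits $s'$-convexity as a sublattice of $\YY^{**}$. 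In short, you located the Krivine/Maurey--Pisier machinery on the wrong side: the concavity half is the one with a clean quotable theorem, while the convexity half is the one requiring an extra idea --- the passage through the dual lattice --- which is absent from your sketch.
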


\begin{theorem}[cf.\ \cite{Altshuler1975}]
Let $\ww$ be a weight whose primitive sequence $\prim$ is doubling, and let $0<q\le \infty$, let $0<q\le \infty$. Then, $d_{1,q}(\ww)$ is superreflexive if and only if $1<q<\infty$ and $\prim$ has both the LRP and the URP.
\end{theorem}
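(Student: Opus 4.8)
The plan is to route everything through the lattice criterion of Theorem~\ref{thm:SRLattice}: a Banach lattice is superreflexive exactly when it is $p$-convex and $r$-concave for some $1<p\le r<\infty$. Since $d_{1,q}(\ww)$ is rearrangement invariant, its norm depends only on $\ff\approx\prim$, and I would match $p$-convexity with the URP and $r$-concavity with the LRP. The two convexity-type estimates are obtained from Proposition~\ref{prop:pconvex}, one for the space and one for its dual, with the dual identified via Theorem~\ref{thm:dualB}; concavity is then read off from convexity of the dual by the standard lattice duality.

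For the \emph{if} direction, assume $1<q<\infty$ and that $\prim$ has both the URP and the LRP. First, Proposition~\ref{prop:pconvex} gives that $d_{1,q}(\ww)$ is $p$-convex for some $p>1$, and Theorem~\ref{thm:LorentzReflexive} guarantees it is reflexive. Next I would pass to the dual: since $\prim$ has the URP and the LRP, Theorem~\ref{thm:dualB} yields $(d_{1,q}(\ww))^{*}=d_{1,q'}(\ww^{*})$, whose primitive sequence is $\approx\prim^{*}$. By Lemma~\ref{lem:DualRegular}, the LRP of $\prim$ is the URP of $\prim^{*}$, so Proposition~\ref{prop:pconvex} applies to the dual (note $1<q'<\infty$) and shows $(d_{1,q}(\ww))^{*}$ is $p'$-convex for some $p'>1$. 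By the duality between convexity and concavity in Banach lattices (\cite{LinTza1979}), $p'$-convexity of the dual forces $(d_{1,q}(\ww))^{**}=d_{1,q}(\ww)$ to be $(p')'$-concave. Hence $d_{1,q}(\ww)$ is both $p$-convex and $(p')'$-concave; after lowering the convexity exponent to $\min\{p,(p')'\}$ and raising the concavity exponent to $\max\{p,(p')'\}$ (and perturbing to make them distinct), the hypotheses of Theorem~\ref{thm:SRLattice} are met and superreflexivity follows.

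For the \emph{only if} direction, assume $d_{1,q}(\ww)$ is superreflexive, hence reflexive; Theorem~\ref{thm:LorentzReflexive} then forces $1<q<\infty$ and the URP, and Theorem~\ref{thm:LC} makes the space a genuine Banach lattice. To extract the LRP I would use that, by Theorem~\ref{thm:SRLattice}, superreflexivity gives $r$-concavity for some $r<\infty$, and test it against disjoint flat blocks. Fix $\ell,k\in\NN$, take $k$ disjoint index sets $I_{1},\dots,I_{k}$ each of cardinality $\ell$, and set $x_{i}=s_{\ell}^{-1}\sum_{n\in I_{i}}\ee_{n}$. Rearrangement invariance and $\ff\approx\prim$ give $\|x_{i}\|\approx 1$, while disjointness and positivity give $\big(\sum_{i}|x_{i}|^{r}\big)^{1/r}=\sum_{i}x_{i}=s_{\ell}^{-1}\sum_{n=1}^{k\ell}\ee_{n}$, whose norm is $\approx s_{k\ell}/s_{\ell}$ independently of $r$. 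The $r$-concavity inequality therefore reads $k^{1/r}\lesssim s_{k\ell}/s_{\ell}$ uniformly in $\ell$ and $k$; choosing an integer $b$ with $b^{1/r}$ large enough yields $s_{b\ell}\ge 2s_{\ell}$ for all $\ell$, which is precisely the LRP (and incidentally forces $\prim$ unbounded).

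The step I expect to be the main obstacle is securing concavity in the \emph{if} direction: unlike convexity, $r$-concavity of $d_{1,q}(\ww)$ is not visible from the weight directly, and I obtain it only by dualizing, which is exactly why Theorem~\ref{thm:dualB}, and hence the LRP, is indispensable — the LRP is what turns the dual into a well-behaved Lorentz space $d_{1,q'}(\ww^{*})$ to which Proposition~\ref{prop:pconvex} can be applied. The remaining difficulties are bookkeeping: correctly invoking the convexity/concavity lattice duality together with reflexivity to transfer the estimate back to $d_{1,q}(\ww)$, and reconciling the two exponents into a single admissible pair $1<p\le r<\infty$. By contrast, the \emph{only if} derivation of the LRP is clean, since for disjointly supported vectors the lattice $r$-mean collapses to the lattice sum and the concavity test reduces to a growth estimate on $\prim$.
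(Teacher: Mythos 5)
Your proposal is correct, and while your \emph{if} direction coincides with the paper's proof, your \emph{only if} direction takes a genuinely different route. The paper also proves sufficiency by applying Proposition~\ref{prop:pconvex} both to $d_{1,q}(\ww)$ and to its dual $d_{1,q'}(\ww^*)$ (identified via Theorem~\ref{thm:dualB}), then transferring convexity of the dual into concavity of the space by the Lindenstrauss--Tzafriri duality and finishing with Theorem~\ref{thm:SRLattice}; your detour through the bidual plus reflexivity is just an explicit rendering of that duality (which in fact holds as an equivalence for general Banach lattices, since a lattice embeds as a sublattice of its bidual, so reflexivity is not strictly needed), and your reconciliation of the two exponents is handled in the paper by taking a single $p$ that works for both spaces. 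For necessity, however, the paper argues quite differently: from Theorem~\ref{thm:SRLattice} it extracts nontrivial Rademacher \emph{type}, observes that the unit vector system is an almost greedy basis of $d_{1,q}(\ww)$, and invokes \cite{DKKT2003}*{Proposition 4.1} to conclude that the fundamental function — hence $\prim$ — has both the URP and the LRP. You instead get $1<q<\infty$ and the URP from reflexivity via Theorem~\ref{thm:LorentzReflexive}, and then derive the LRP by testing $r$-concavity (again from Theorem~\ref{thm:SRLattice}) on the disjoint flat blocks $x_i=s_\ell^{-1}\sum_{n\in I_i}\ee_n$, which yields $k^{1/r}\lesssim s_{k\ell}/s_\ell$ uniformly and hence the LRP. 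Your computation is sound (disjointness collapses the lattice $r$-mean to the sum, and rearrangement invariance gives the two norm estimates), and it buys self-containedness: it avoids the external result on almost greedy bases in spaces with type and makes visible the precise correspondence LRP $\leftrightarrow$ concavity, at the cost of being slightly longer than the paper's citation-based argument. Two cosmetic caveats: your ordering (reflexivity $\Rightarrow$ URP $\Rightarrow$ local convexity via Theorem~\ref{thm:LC} $\Rightarrow$ Theorem~\ref{thm:SRLattice} applies) is needed to avoid circularity and you state it correctly, and your reuse of the symbol $p'$ for a fresh exponent rather than the conjugate of $p$ invites confusion and should be renamed.
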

\begin{proof}
If $q\notin(1,\infty)$, then $d_{1,q}(\ww)$ is not reflexive by Theorem~\ref{thm:LorentzReflexive}. If $\YY=d_{1,q}(\ww)$ is superreflexive, applying Theorem~\ref{thm:SRLattice} yields $p>1$ such that $\YY$ has type $p$. Taking into account that the unit vector system is an almost greedy basis of $\YY$, we infer from \cite{DKKT2003}*{Proposition 4.1}, that $\ff[\YY]$ has both the LRP and the URP. Since $\prim$ is equivalent to $\ff[\YY]$, it also has the the LRP and the URP.

Assume that $1<q<\infty$ and that $\prim=(s_n)_{n=1}^\infty$ has the LRP and the URP. By Lemma~\ref{lem:DualRegular}, the dual sequence $\prim^*$ has both the URP and the LRP. By Proposition~\ref{prop:pconvex}, there is $p>1$ such that both $\YY$ and $d_{1,q'}(\ww^*)$ are $p$-convex lattices, where $\ww^*$ is a weight whose primitive sequence is equivalent to $\prim^*$. Combining Theorem~\ref{thm:dualB} with \cite{LinTza1979}*{Theorem 1.f.7} gives that $\YY$ is an $r$-concave lattice for all $r>p'$. Applying Theorem~\ref{thm:SRLattice} puts an end to the proof.
\end{proof}

\section{Embeddings associated with quasi-greedy bases}\label{sect:EQB}\noindent
A basis $\XB$ of a quasi-Banach space $\XX$ is said to be \emph{quasi-greedy} if the TGA with respect to it is uniformly bounded. If $\XB$ is quasi-greedy, its quasi-greedy constant will be the smallest constant $C$ such that
\[
\max|\{ \Vert f-\GG_m(f)\Vert, \Vert \GG_m(f) \Vert\} \le C \Vert f \Vert, \quad f\in\XX, \; m\in\NN.
\]
It is known that any basis $\XB$ of any $p$-Banach space $\XX$, $0<p\le 1$, satisfies the estimate
\begin{equation*}
d_{1,p}(\ww_u) \stackrel{\XB}\hookrightarrow \XX.
\end{equation*}
where $\ww_u$ is the weight whose primitive sequence is $\udf[\XB,\XX]$ (see \cite{AABW2021}*{Theorem 9.12}. In the case when $\XX$ is a superreflexive Banach space and $\XB$ is quasi-greedy this embedding can be improved.

\begin{theorem}\label{prop:embedding1}
Suppose $\XB$ is a quasi-greedy basis of a superreflexive Banach space $\XX$. Let $\ww_u$ be the weight whose primitive sequence is $\udf[\XB,\XX]$. Then, there is $q>1$ such that
\[
d_{1,q}(\ww_u) \stackrel{\XB}\hookrightarrow \XX.
\]
\end{theorem}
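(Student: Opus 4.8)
The plan is to combine the structural embedding $d_{1,1}(\ww_u) \stackrel{\XB}\hookrightarrow \XX$ (valid since almost greedy bases satisfy \eqref{eq:SSAG}, and quasi-greedy democratic bases are almost greedy) with the convergence criterion from Theorem~\ref{lem:BW}. Recall that a quasi-greedy basis is unconditional for constant coefficients, so the weight $\ww_u$ is well-defined and its primitive sequence $\prim_u=\udf[\XB,\XX]$ is doubling. The key idea is that Theorem~\ref{lem:BW} upgrades an $\ell_1$-type control on blocks to an $\ell_q$-type control for some $q>1$, provided the blocks satisfy the alternating-sign domination hypothesis \eqref{eq:NonHomSchauder}. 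So the whole proof reduces to: given $(a_n)_{n=1}^\infty \in d_{1,q}(\ww_u)$, organize the series $\sum_n a_n \xx_n$ into blocks $(f_k)_k$ to which Theorem~\ref{lem:BW} applies, and verify that $\sum_k \Vert f_k\Vert^q$ is controlled by $\Vert (a_n)\Vert_{1,q,\ww_u}^q$.

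First I would dyadically partition the coefficients by magnitude. Let $f^*=(a_n^*)$ be the nonincreasing rearrangement, and group the indices into sets $A_k$ on which $|a_n|$ lies in a dyadic window $[2^{-k-1}M, 2^{-k}M)$ (with $M=\sup_n |a_n|$); set $f_k=\sum_{n\in A_k} a_n \xx_n$. Because $\XB$ is quasi-greedy, each partial-sum-type block is comparable to $\Vert \sum_{n\in A_k}\varepsilon_n \xx_n\Vert$ scaled by the magnitude of the coefficients in $A_k$, so the democracy/quasi-greediness yields $\Vert f_k\Vert \lesssim 2^{-k} \,\udf(|A_k|) = 2^{-k}\, s_{|A_k|}$ where $s_m = \udf(m)$. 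The point is that this is exactly the $d_{1,1}(\ww_u)$-type estimate on each block, arising from the embedding $d_{1,1}(\ww_u)\stackrel{\XB}\hookrightarrow\XX$ applied blockwise. The crucial step is then to check that these blocks satisfy the alternating-sign hypothesis \eqref{eq:NonHomSchauder} with a constant $C$ depending only on the quasi-greedy constant of $\XB$; this follows because quasi-greediness gives uniform bounds on the operators that compare $\Vert\sum_{n=j}^k f_n\Vert$ against expansions where the tail carries the opposite sign, using that the blocks have separated (dyadically decreasing) magnitudes. Once \eqref{eq:NonHomSchauder} is verified, Theorem~\ref{lem:BW} produces $q>1$ and $K$ with $\Vert \sum_k f_k\Vert \le K (\sum_k \Vert f_k\Vert^q)^{1/q}$, and the series converges.

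Finally I would convert $(\sum_k \Vert f_k\Vert^q)^{1/q}$ into $\Vert (a_n)\Vert_{1,q,\ww_u}$. Using $\Vert f_k\Vert \lesssim 2^{-k} s_{|A_k|}$ together with the doubling of $\prim_u$ and a comparison between the dyadic grouping and the integral/summation defining $\Vert\cdot\Vert_{1,q,\ww_u}$ (i.e. $\sum_n (a_n^* s_n)^q w_n/s_n$), a routine telescoping shows $\sum_k \Vert f_k\Vert^q \lesssim \Vert(a_n)\Vert_{1,q,\ww_u}^q$; here Lemma~\ref{lem:ChangeScale} and the equivalence $\Vert\cdot\Vert_{1,q,\ww_u}\approx\Vert\cdot\Vert_{(\uu,q)}$ with $u_n=s_n^q/n$ streamline the bookkeeping. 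I expect the main obstacle to be verifying the alternating-sign domination \eqref{eq:NonHomSchauder} for the dyadic blocks with a constant independent of the data: this is where the quasi-greedy structure must be exploited carefully, since \eqref{eq:NonHomSchauder} is genuinely a two-sided comparison and the second Remark after Theorem~\ref{lem:BW} shows it cannot be weakened to the one-sided bound \eqref{eq:DW}. Everything else is either the quoted embedding $d_{1,1}(\ww_u)\stackrel{\XB}\hookrightarrow\XX$, the machinery of Section~\ref{sect:UC}, or routine Lorentz-norm estimates.
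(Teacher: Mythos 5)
Your proposal is correct and follows essentially the same route as the paper's proof: dyadic grouping of the coefficients by magnitude, verification of \eqref{eq:NonHomSchauder} with $C$ the quasi-greedy constant (the greedy projection of a vector whose head and tail have dyadically separated magnitudes is exactly the head, and the sign flip on the tail is harmless since the TGA only sees moduli), an application of Theorem~\ref{lem:BW}, and a telescoping/Abel-summation comparison with the Lorentz norm via Lemma~\ref{lem:ChangeScale}. One cosmetic point: your opening appeal to almost greediness, democracy and \eqref{eq:SSAG} is unnecessary and slightly off (democracy is not assumed in this theorem); the blockwise estimate $\Vert f_k\Vert \lesssim 2^{-k}\,\udf[\XB,\XX](|A_k|)$ needs only the definition of $\udf$ together with the truncation estimate valid for any basis of a Banach space, which is exactly how the paper proceeds.
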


\begin{proof}
If $m\in\NN$, $A\subseteq\NN$ has cardinality at most $m$, and $(b_{n})_{n\in A}$ are scalars with $|b_n|\le 1$ for all $n\in A$, then
\begin{equation}\label{eq:embedding1}
\left\Vert \sum_{n\in A} b_n\, \xx_n\right\Vert \le s_m:=\udf[\XB,\XX](m)
\end{equation}
(see \cite{AABW2021}*{Corollary 2.3}). Let $\ww_q=(u_{n})_{n=1}^\infty$ be the weight whose primitive sequence is $(s_n^q)_{n=1}^\infty$. Use Lemma~\ref{lem:ChangeScale} to pick a constant $D$ such that
\[
\Vert f\Vert_{(\ww_q,q)}\le D \Vert f\Vert_{1,q,\ww_u}, \quad f\in c_0.
\]
Let $q\in(1,\infty)$ and $K\in(0,\infty)$ be as in Lemma~\ref{lem:BW} with respect to $\XX$ and the quasi-greedy constant $C$ of $\XB$.

Let $(a_n)_{n=1}^\infty\in c_{00}$ be such that $(|a_n|)_{n=1}^\infty$ is non-increasing. Put $t=|a_1|$. Consider for each $k\in\NN$ the set
\[
J_k=\{n\in\NN \colon t 2^{-k}< |a_n| \le t 2^{-k+1}\}
\]
and the vector
\[
f_k=\sum_{n\in J_k} a_n\, \xx_n.
\]
Notice that $(J_k)_{k=1}^\infty$ is a partition of $\{n\in\NN \colon a_n\not=0\}$ and that $(f_k)_{k=1}^\infty$ satisfies \eqref{eq:NonHomSchauder}. Set $n_k=|J_k|$ ($n_0=0$) and $m_k=\sum_{j=1}^k n_j$, so that $J_k=\{n\in\NN \colon m_{k-1}+1\le n \le m_k\}$.
Combining \eqref{eq:embedding1} with Abel's summation formula gives
\begin{align*}
\left\Vert \sum_{n=1}^\infty a_n \, \xx_n\right\Vert^q
&=K^q\left\Vert\sum_{k=1}^\infty f_k \right\Vert^q\\
&\le K^q \sum_{k=1}^\infty \Vert f_k\Vert^q\\
&\le K^q \sum_{k=1}^\infty (t 2^{-k+1} s_{m_k})^q \\
&= K^q (2t)^q \sum_{k=1}^\infty 2^{-kq} \sum_{j=1}^k \sum_{n\in J_j} u_n\\
&= \frac{ (2tK)^q}{1-2^{-q}} \sum_{j=1}^\infty 2^{-jq} \sum_{n\in J_j} u_n\\
&\le \frac{(4K)^q}{2^q -1} \sum_{n=1}^\infty |a_n|^q u_n.
\end{align*}
Consequently, for every $f=(a_n)_{n=1}^\infty\in c_{00}$ we have
\[
\left\Vert \sum_{n=1}^\infty a_n \, \xx_n\right\Vert
\le 4 K (2^q-1)^{-1/q} \Vert f \Vert_{(\ww_q,q)}
\le 4 K D(2^q-1)^{-1/q}\Vert f \Vert_{1,q,\ww_u}.
\]
Since the unit vectors generate the whole space $d_{1,q}(\ww_u)$, we are done.
\end{proof}

We are almost ready to tackle the proof of Theorem~\ref{thm:main}. Before we do that, we record a result that we will need. A basis $\XB$ of a quasi-Banach space $\XX$ is said to be \emph{bidemocratic} if
\[
\sup_m \frac{1}{m} \udf[\XB,\XX](m) \udf[\XB^*,\XX^*](m)<\infty.
\]

\begin{proposition}\label{prop:SSBD}
Let $\XB$ be a quasi-greedy basis (or, more generally, a basis with the property that the restricted truncation operators are uniformly bounded) of a Banach space $\XX$. Suppose that $\XB$ is democratic and that $\XX$ has type $q>1$. Then $\udf[\XB,\XX]$ has the URP and the LRP, and $\XB$ is bidemocratic.
\end{proposition}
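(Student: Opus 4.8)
The plan is to establish the three conclusions in sequence, using the type-$q$ hypothesis to control the fundamental function from above and democracy together with quasi-greediness to control it from below. First I would prove that $\udf[\XB,\XX]$ has the URP. Since $\XX$ has type $q>1$, for any $m$ vectors with unimodular signs $\varepsilon_n$, averaging over independent random signs and applying the type-$q$ inequality gives
\[
\left\Vert \sum_{n\in A}\varepsilon_n\,\xx_n\right\Vert \lesssim \left(\sum_{n\in A}\Vert \xx_n\Vert^q\right)^{1/q}\lesssim |A|^{1/q},
\]
because the basis is semi-normalized. Hence $\udf[\XB,\XX](m)\lesssim m^{1/q}$, which in turn yields $\udf[\XB,\XX](bm)\lesssim b^{1/q}\udf[\XB,\XX](m)$; choosing $b$ so that $b^{1/q}\le b/2$ (possible since $q>1$) and invoking the equivalence $\udf\approx\ldf$ coming from democracy gives the URP with that $b$.

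Next I would obtain the LRP. Here the key is that quasi-greedy democratic bases satisfy a \emph{lower} estimate dual to \eqref{eq:embedding1}: for a set $A$ with $|A|=m$ one has $\ldf[\XB,\XX](m)\gtrsim \Vert \sum_{n\in A}\varepsilon_n\,\xx_n\Vert$ with a uniform constant, and the truncation operator being bounded forces $\udf[\XB^*,\XX^*](m)\,\ldf[\XB,\XX](m)\gtrsim m$. Combining this with the upper bound $\udf[\XB^*,\XX^*](m)\lesssim \udf[\XB,\XX](m)$ (valid for quasi-greedy bases, where the fundamental functions of basis and dual basis are comparable up to the democracy constants) gives $\ldf[\XB,\XX](m)\gtrsim m/\udf[\XB,\XX](m)$. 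Plugging in the URP-derived bound $\udf[\XB,\XX](m)\lesssim m^{1/q}$ produces $\udf[\XB,\XX](m)\approx\ldf[\XB,\XX](m)\gtrsim m^{1-1/q}\cdot m^{1/q-\text{(something)}}$; more directly, one shows $\udf[\XB,\XX](bm)/\udf[\XB,\XX](m)\ge 2$ for suitable $b$ by reading off the growth rate forced by the pairing, which is exactly the LRP.

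Finally, bidemocracy follows almost formally from the two regularity properties. Since $\XB$ is quasi-greedy and democratic, one has the standard inequality $\udf[\XB^*,\XX^*](m)\lesssim m/\ldf[\XB,\XX](m)$, and combined with $\ldf[\XB,\XX]\approx\udf[\XB,\XX]$ this yields
\[
\frac{1}{m}\,\udf[\XB,\XX](m)\,\udf[\XB^*,\XX^*](m)\lesssim \frac{\udf[\XB,\XX](m)}{\ldf[\XB,\XX](m)}\lesssim 1,
\]
uniformly in $m$, which is precisely bidemocracy.

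The main obstacle I anticipate is the LRP step: extracting a genuine \emph{lower} regularity estimate requires more than the crude upper bound from type $q$. One must exploit quasi-greediness to get a two-sided control linking $\udf[\XB,\XX]$ and $\udf[\XB^*,\XX^*]$ through the pairing $\udf[\XB,\XX](m)\,\udf[\XB^*,\XX^*](m)\gtrsim m$, and then feed the URP-upper bound back in to force the complementary growth. The cleanest route is probably to invoke the characterization from \cite{DKKT2003}*{Proposition 4.1} already cited elsewhere in the paper, which directly connects Rademacher type of $\XX$ with the LRP and URP of the fundamental function of an almost greedy basis; the democracy hypothesis plus quasi-greediness make $\XB$ almost greedy, so that proposition applies and shortcuts the delicate duality estimate.
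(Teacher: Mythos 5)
Your closing fallback---observing that quasi-greedy plus democratic equals almost greedy and then citing \cite{DKKT2003}*{Proposition 4.1}---is sound and is in fact exactly what the paper does for the URP and the LRP (the paper additionally notes that the \emph{proof} of that proposition works for super-democratic bases, which is what covers the parenthetical ``restricted truncation operators'' case that your fallback does not). But the two self-contained arguments you offer before that fallback are broken, and for the same structural reason: you try to deduce ratio (regularity) properties from pointwise growth bounds. From $\udf[\XB,\XX](m)\lesssim m^{1/q}$ you infer ``$\udf[\XB,\XX](bm)\lesssim b^{1/q}\udf[\XB,\XX](m)$''; this would require the matching lower bound $\udf[\XB,\XX](m)\gtrsim m^{1/q}$, which you do not have at this stage. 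A nondecreasing function $t$ with $t(m)/m$ non-increasing and $t(m)\le m^{1/q}$ can perfectly well fail the URP (let it alternate long flat stretches with stretches of linear growth at the maximal admissible slope), so the implication is a genuine non sequitur, not a shortcut. The correct use of type---and it is what the proof of \cite{DKKT2003}*{Proposition 4.1} does---is to apply the type-$q$ inequality not to single basis vectors but to $b$ disjoint blocks $f_i=\sum_{n\in A_i}\xx_n$ with $|A_i|=m$, and then use unconditionality for constant coefficients (a consequence of quasi-greediness, which is also what legitimizes replacing a fixed sign pattern by the average over random signs in your first display) together with democracy to squeeze $\udf[\XB,\XX](bm)\lesssim \EE\bigl\Vert\sum_{i=1}^b\epsilon_i f_i\bigr\Vert\lesssim b^{1/q}\udf[\XB,\XX](m)$ with constants independent of $b$ and $m$; that ratio bound is the URP. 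Your LRP paragraph has the further defect that the inequality ``$\udf[\XB^*,\XX^*](m)\lesssim\udf[\XB,\XX](m)$ for quasi-greedy bases'' is false: the unit vector basis of $c_0$ is unconditional, hence quasi-greedy, with $\udf[\XB,\XX](m)\equiv 1$, while its dual basis in $\ell_1$ has $\udf[\XB^*,\XX^*](m)=m$.

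The bidemocracy step is circular and is not rescued by the fallback. In the presence of democracy, the ``standard inequality'' $\udf[\XB^*,\XX^*](m)\lesssim m/\ldf[\XB,\XX](m)$ \emph{is} bidemocracy, since the opposite bound $\udf[\XB,\XX](m)\,\udf[\XB^*,\XX^*](m)\ge m$ holds trivially for every basis; so you are quoting the conclusion as a known fact. Moreover, it is not a consequence of quasi-greediness and democracy alone: almost greedy bases need not be bidemocratic (the Lindenstrauss basis of $\ell_1$ is quasi-greedy and democratic with $\udf[\XB,\XX](m)\approx m$, yet summing the dual functionals over a full dyadic tree of cardinality $m$ gives $\udf[\XB^*,\XX^*](m)\gtrsim\log m$), which is precisely why \cite{DKKT2003}*{Proposition 4.4} carries a URP hypothesis---and, for that matter, why the present proposition needs the type hypothesis for this conclusion at all. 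The repair is the route the paper takes: once the URP of $\udf[\XB,\XX]$ is established, bidemocracy follows from \cite{DKKT2003}*{Proposition 4.4} in the quasi-greedy case, or from \cite{AABW2021}*{Proposition 10.17} combined with Lemma~\ref{lem:URP} in the general truncation-operator case. Your write-up never makes this link between the URP and bidemocracy, and without it the third conclusion remains unproved.
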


\begin{proof}
By the proof of \cite{DKKT2003}*{Proposition 4.1} (which works not only for almost greedy bases but also for super-democratic ones), $\udf[\XB,\XX]$ has the URP and the LRP. Then, combining \cite{AABW2021}*{Proposition 10.17} with Lemma~\ref{lem:URP} (or, in the case that $\XB$ is quasi-greedy, using \cite{DKKT2003}*{Proposition 4.4}), gives that $\XB$ is bidemocratic.
\end{proof}

\begin{proof}[Proof of Theorem~\ref{thm:main}]
By Proposition~\ref{prop:SSBD} and Lemma~\ref{lem:DualRegular}, $\prim:=\udf$ and $\prim^*$ have the URP and the LRP. Moreover, $\XB$ is bidemocratic. Let $\ww^*$ be a weight whose primitive sequence is equivalent to $\prim^*$. Combining Proposition~\ref{prop:SSBD} with \cite{DKKT2003}*{Theorem 5.4}, gives that dual basis $\XB^*$ is also almost greedy, with fundamental function of the same order as $\prim^*$. Consequently, since $\XX^*$ is also superreflexive, applying Theorem~\ref{prop:embedding1} yields $s$ and $q>1$ such that
\[
d_{1,q}(\ww) \stackrel{\XB}\hookrightarrow \XX
\; \text{ and }\;
d_{1,s}(\ww^*) \stackrel{\XB^*}\hookrightarrow \XX^*.
\]
Taking into account Theorem~\ref{thm:dualB}, dualizing the second embedding gives
\[
\XX \stackrel{\XB^{**}}\hookrightarrow d_{1,s'}(\ww).
\]
Since $\XB^{**}$ is equivalent to $\XB$ by \cite{AABW2021}*{Theorem 10.15}, we are done.
\end{proof}

We conclude with the ready consequence of applying Theorem~\ref{thm:main} to a quasi-greedy basis of a Hilbert space. Note that this result improves \cite{Woj2000}*{Theorem 3}.

\begin{theorem}\label{thm:SSImprovedHilbert}
Let $\XB$ be a quasi-greedy basis $\XB$ of $\ell_2$. Then, there are $1<q<r<\infty$ such that
\[
\ell_{2,q} \stackrel{\XB}\hookrightarrow \ell_2 \stackrel{\XB}\hookrightarrow \ell_{2,r}.
\]
\end{theorem}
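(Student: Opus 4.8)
The plan is to derive Theorem~\ref{thm:SSImprovedHilbert} as a direct corollary of Theorem~\ref{thm:main}, using the special structure of the Hilbert space $\ell_2$. The starting point is that a quasi-greedy basis of a Banach space is almost greedy if and only if it is democratic, so I first need to verify that \emph{every} quasi-greedy basis of $\ell_2$ is automatically democratic. The cleanest route is to invoke the fact that $\ell_2$ has type $2>1$; applying Proposition~\ref{prop:SSBD} then requires democracy as a hypothesis, so instead I would appeal to the classical result of Wojtaszczyk (\cite{Woj2000}*{Theorem 3}) that any quasi-greedy basis of a Hilbert space is democratic. Once democracy is in hand, $\XB$ is almost greedy and $\XX=\ell_2$ is superreflexive, so Theorem~\ref{thm:main} applies and yields $1<q<r<\infty$ together with the weight $\ww$ defined by $\sum_{n=1}^m w_n=\udf[\XB,\XX](m)$ such that $d_{1,q}(\ww)\stackrel{\XB}\hookrightarrow \ell_2\stackrel{\XB}\hookrightarrow d_{1,r}(\ww)$.

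The remaining task is to identify the abstract Lorentz spaces $d_{1,q}(\ww)$ and $d_{1,r}(\ww)$ with the concrete spaces $\ell_{2,q}$ and $\ell_{2,r}$. The key input is that the fundamental function $\udf[\XB,\ell_2]$ of a democratic quasi-greedy basis of $\ell_2$ satisfies $\udf[\XB,\ell_2](m)\approx m^{1/2}$. This is again a consequence of Wojtaszczyk's work (and is implicit in the sandwich $\ell_{2,1}\hookrightarrow \ell_2\hookrightarrow\ell_{2,\infty}$ quoted in the introduction): the fundamental function of any basis of $\ell_2$ is squeezed between those of $\ell_{2,1}$ and $\ell_{2,\infty}$, both of which have fundamental function of order $m^{1/2}$. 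Thus the primitive sequence $\prim=(s_m)_{m=1}^\infty$ of $\ww$ satisfies $s_m\approx m^{1/2}$.

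With $s_m\approx m^{1/2}$ established, the identification is a routine application of Lemma~\ref{lem:ChangeScale}. Indeed, the classical Lorentz space $\ell_{2,q}$ is, up to renaming, the space $d(\uu,q)$ with $u_n=n^{q/2-1}$ (so that its fundamental function is of order $m^{1/2}$), and Lemma~\ref{lem:ChangeScale} gives $\Vert\cdot\Vert_{(\uu,q)}\approx\Vert\cdot\Vert_{1,q,\ww}$ precisely when $s_n^q\approx n^{q/2}$, which holds here since $s_n\approx n^{1/2}$. Therefore $d_{1,q}(\ww)=\ell_{2,q}$ and $d_{1,r}(\ww)=\ell_{2,r}$ up to equivalent quasi-norms, and substituting these identifications into the sandwich from Theorem~\ref{thm:main} yields the claimed embeddings.

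I expect the main obstacle to be not any deep argument but rather pinning down the democracy of an arbitrary quasi-greedy basis of $\ell_2$ and the consequent estimate $\udf[\XB,\ell_2](m)\approx m^{1/2}$ cleanly from the cited literature; once these two facts are secured, Theorem~\ref{thm:main} and Lemma~\ref{lem:ChangeScale} do all the work, and the proof is essentially a two-line deduction.
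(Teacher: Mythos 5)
Your proposal is correct and is essentially the paper's own argument: the paper states this theorem as the ``ready consequence'' of applying Theorem~\ref{thm:main} to a quasi-greedy basis of a Hilbert space, and the steps it leaves implicit are exactly the ones you supply---democracy (hence almost greediness) of quasi-greedy bases of $\ell_2$ from \cite{Woj2000}, the resulting estimate $\udf[\XB,\ell_2](m)\approx m^{1/2}$, and the identification of $d_{1,q}(\ww)$ with $\ell_{2,q}$ via Lemma~\ref{lem:ChangeScale}. One wording slip: ``the fundamental function of any basis of $\ell_2$'' should read ``any \emph{quasi-greedy} basis,'' since Wojtaszczyk's sandwich $\ell_{2,1}\hookrightarrow\ell_2\hookrightarrow\ell_{2,\infty}$ holds only for such bases.
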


\section{Open problems}\label{sect:OP}\noindent
Any quasi-greedy basis $\XB$ of any quasi-Banach space $\XX$ satisfies the embedding
\[
\XX \stackrel{\XB}\hookrightarrow d_{1,\infty}(\ww_l),
\]
where $\ww_l$ is the weight whose primitive sequence is $\ldf[\XB,\XX]$ (see \cite{AABW2021}*{Theorem 9.12}). In light of Theorems~\ref{thm:main} and \ref{prop:embedding1}, it is natural to wonder whether this embedding can also be improved in the particular case that $\XX$ is a superreflexive space.

\begin{question}\label{conj:Anso}
Let $\XB$ be a quasi-greedy basis of a superreflexive Banach space $\XX$. Let $\ww_l$ denote the weight whose primitive sequence is $\ldf[\XB,\XX]$. Is there $r<\infty$ such that $ \XX \stackrel{\XB}\hookrightarrow d_{1,r}(\ww_l)$?
\end{question}

We point out that the answer to Question~\ref{conj:Anso} is positive if we restrict ourselves to the more demanding class of unconditional bases. Indeed, if $\XB$ is a semi-normalized unconditional bases of $\XX$, then, by Theorem~\ref{thm:SRLattice}, the lattice structure it induces on $\XX$ is $r$-concave for some $r<\infty$. Therefore, $ \XX \stackrel{\XB}\hookrightarrow d_{1,r}(\ww_l)$ (see \cite{AlbiacAnsorena2021}*{Theorem 7.3}).

Thus, Question~\ref{conj:Anso} connects with the topic of finding conditional quasi-greedy basis in Banach spaces. To the best of our knowledge, all known methods for building conditional quasi-greedy bases give, when applied to superreflexive spaces, almost greedy bases (see \cites{KoTe1999,Woj2000,DKK2003,GHO2013,GW2014, BBGHO2018,AADK2019b,AABW2021}). Of course, we can combine conditional almost greedy bases through direct sums to reach examples of conditional quasi-greedy non-democratic bases, but this procedure does not lead no a negative answer to Question~\ref{conj:Anso}. As we have already noted, the dual basis of an almost greedy basis of a superreflexive space is quasi-greedy. In this regard, even the following very natural question seems to be open.

\begin{question}\label{conj:DualQG}
Let $\XB$ a quasi-greedy basis of $\ell_p$, $p\in(1,2)\cup(2,\infty)$. Is its dual basis $\XB^*$ quasi-greedy?
\end{question}
Notice that the answer to Question~\ref{conj:DualQG} is positive for $p=2$, but the proof goes through democracy.

Finally, we wonder whether the assumption that $\XB$ is almost greedy in Theorem~\ref{thm:main} can be relaxed. Bases $\XB$ that can be squeezed between two symmetric sequence spaces with equivalent fundamental functions are characterized as those democratic bases for which the restricted truncation operators are uniformly bounded (see \cite{AABW2021}*{Lemma 9.3 and Corollary 9.15}). Moreover, if the space $\XX$ is superreflexive, such bases satisfy a stronger condition. Namely, they are bidemocratic by Proposition~\ref{prop:SSBD}. If we focus on Hilbert spaces, the following question arises from Theorem~\ref{thm:SSImprovedHilbert}.

\begin{question}
Let $\XB$ be a bidemocratic basis of $\ell_2$. Are there $1<q<r<\infty$ such that
$
\ell_{2,q} \stackrel{\XB}\hookrightarrow \ell_2 \stackrel{\XB}\hookrightarrow \ell_{2,r}
$?
\end{question}


\begin{bibdiv}
\begin{biblist}

\bib{AlbiacAnsorena2016}{article}{
      author={Albiac, Fernando},
      author={Ansorena, Jos\'{e}~L.},
       title={Lorentz spaces and embeddings induced by almost greedy bases in
  {B}anach spaces},
        date={2016},
        ISSN={0176-4276},
     journal={Constr. Approx.},
      volume={43},
      number={2},
       pages={197\ndash 215},
         url={https://doi-org/10.1007/s00365-015-9293-3},
      review={\MR{3472645}},
}

\bib{AlbiacAnsorena2021}{article}{
      author={Albiac, Fernando},
      author={Ansorena, Jos{\'e}~L.},
       title={Uniqueness of unconditional basis of infinite direct sums of
  quasi-banach spaces},
        date={2021},
     journal={arXiv e-prints},
      eprint={2102.03081},
}

\bib{AAB2021}{article}{
      author={Albiac, Fernando},
      author={Ansorena, Jos\'{e}~L.},
      author={Bern\'{a}, Pablo~M.},
       title={New parameters and Lebesgue-type estimates in greedy
  approximation},
        date={2021},
     journal={arXiv e-prints},
      eprint={2104.10912},
}

\bib{AABW2021}{article}{
      author={Albiac, Fernando},
      author={Ansorena, Jos\'{e}~L.},
      author={Bern\'{a}, Pablo~M.},
      author={Wojtaszczyk, Przemys{\l}aw},
       title={Greedy approximation for biorthogonal systems in quasi-Banach
  spaces},
        date={2021},
     journal={Dissertationes Math. (Rozprawy Mat.)},
      eprint={https://doi.org/10.4064/dm817-11-2020},
}

\bib{AADK2016}{article}{
      author={Albiac, Fernando},
      author={Ansorena, Jos\'{e}~L.},
      author={Dilworth, Stephen~J.},
      author={Kutzarova, Denka},
       title={Banach spaces with a unique greedy basis},
        date={2016},
        ISSN={0021-9045},
     journal={J. Approx. Theory},
      volume={210},
       pages={80\ndash 102},
         url={http://dx.doi.org/10.1016/j.jat.2016.06.005},
      review={\MR{3532713}},
}

\bib{AADK2019b}{article}{
      author={Albiac, Fernando},
      author={Ansorena, Jos\'{e}~L.},
      author={Dilworth, Stephen~J.},
      author={Kutzarova, Denka},
       title={Building highly conditional almost greedy and quasi-greedy bases
  in {B}anach spaces},
        date={2019},
        ISSN={0022-1236},
     journal={J. Funct. Anal.},
      volume={276},
      number={6},
       pages={1893\ndash 1924},
         url={https://doi-org/10.1016/j.jfa.2018.08.015},
      review={\MR{3912795}},
}

\bib{AAGHR2015}{article}{
      author={Albiac, Fernando},
      author={Ansorena, Jos\'e~L.},
      author={Garrig\'{o}s, Gustavo},
      author={Hern\'{a}ndez, Eugenio},
      author={Raja, Mat\'{\i}as},
       title={Conditionality constants of quasi-greedy bases in super-reflexive
  {B}anach spaces},
        date={2015},
        ISSN={0039-3223},
     journal={Studia Math.},
      volume={227},
      number={2},
       pages={133\ndash 140},
         url={https://doi-org/10.4064/sm227-2-3},
      review={\MR{3397274}},
}

\bib{AlbiacKalton2016}{book}{
      author={Albiac, Fernando},
      author={Kalton, Nigel~J.},
       title={Topics in {B}anach space theory},
     edition={Second},
      series={Graduate Texts in Mathematics},
   publisher={Springer, [Cham]},
        date={2016},
      volume={233},
        ISBN={978-3-319-31555-3; 978-3-319-31557-7},
         url={https://doi.org/10.1007/978-3-319-31557-7},
        note={With a foreword by Gilles Godefory},
      review={\MR{3526021}},
}

\bib{Allen1978}{article}{
      author={Allen, G.~Donald},
       title={Duals of {L}orentz spaces},
        date={1978},
        ISSN={0030-8730},
     journal={Pacific J. Math.},
      volume={77},
      number={2},
       pages={287\ndash 291},
         url={http://projecteuclid.org/euclid.pjm/1102806450},
      review={\MR{510924}},
}

\bib{Altshuler1975}{article}{
      author={Altshuler, Zvi},
       title={Uniform convexity in {L}orentz sequence spaces},
        date={1975},
        ISSN={0021-2172},
     journal={Israel J. Math.},
      volume={20},
      number={3-4},
       pages={260\ndash 274},
         url={https://doi.org/10.1007/BF02760331},
      review={\MR{385517}},
}

\bib{BBG2017}{article}{
      author={Bern\'{a}, Pablo~M.},
      author={Blasco, \'{O}scar},
      author={Garrig\'{o}s, Gustavo},
       title={Lebesgue inequalities for the greedy algorithm in general bases},
        date={2017},
        ISSN={1139-1138},
     journal={Rev. Mat. Complut.},
      volume={30},
      number={2},
       pages={369\ndash 392},
         url={https://doi.org/10.1007/s13163-017-0221-x},
      review={\MR{3642039}},
}

\bib{BBGHO2018}{article}{
      author={Bern\'{a}, Pablo~M.},
      author={Blasco, Oscar},
      author={Garrig\'{o}s, Gustavo},
      author={Hern\'{a}ndez, Eugenio},
      author={Oikhberg, Timur},
       title={Embeddings and {L}ebesgue-type inequalities for the greedy
  algorithm in {B}anach spaces},
        date={2018},
        ISSN={0176-4276},
     journal={Constr. Approx.},
      volume={48},
      number={3},
       pages={415\ndash 451},
         url={https://doi.org/10.1007/s00365-018-9415-9},
      review={\MR{3869447}},
}

\bib{CRS2007}{article}{
      author={Carro, Mar\'{\i}a~J.},
      author={Raposo, Jos\'{e}~A.},
      author={Soria, Javier},
       title={Recent developments in the theory of {L}orentz spaces and
  weighted inequalities},
        date={2007},
        ISSN={0065-9266},
     journal={Mem. Amer. Math. Soc.},
      volume={187},
      number={877},
       pages={xii+128},
         url={https://doi-org/10.1090/memo/0877},
      review={\MR{2308059}},
}

\bib{CDVPX1999}{article}{
      author={Cohen, Albert},
      author={DeVore, Ronald},
      author={Petrushev, Pencho},
      author={Xu, Hong},
       title={Nonlinear approximation and the space {${\rm BV}({\bf R}^2)$}},
        date={1999},
        ISSN={0002-9327},
     journal={Amer. J. Math.},
      volume={121},
      number={3},
       pages={587\ndash 628},
  url={http://muse.jhu.edu/journals/american_journal_of_mathematics/v121/121.3cohen.pdf},
      review={\MR{1738406}},
}

\bib{DKK2003}{article}{
      author={Dilworth, Stephen~J.},
      author={Kalton, Nigel~J.},
      author={Kutzarova, Denka},
       title={On the existence of almost greedy bases in {B}anach spaces},
        date={2003},
        ISSN={0039-3223},
     journal={Studia Math.},
      volume={159},
      number={1},
       pages={67\ndash 101},
         url={https://doi.org/10.4064/sm159-1-4},
        note={Dedicated to Professor Aleksander Pe{\l}czy\'nski on the occasion
  of his 70th birthday},
      review={\MR{2030904}},
}

\bib{DKKT2003}{article}{
      author={Dilworth, Stephen~J.},
      author={Kalton, Nigel~J.},
      author={Kutzarova, Denka},
      author={Temlyakov, Vladimir~N.},
       title={The thresholding greedy algorithm, greedy bases, and duality},
        date={2003},
        ISSN={0176-4276},
     journal={Constr. Approx.},
      volume={19},
      number={4},
       pages={575\ndash 597},
         url={https://doi-org/10.1007/s00365-002-0525-y},
      review={\MR{1998906}},
}

\bib{Donoho1993}{article}{
      author={Donoho, David~L.},
       title={Unconditional bases are optimal bases for data compression and
  for statistical estimation},
        date={1993},
        ISSN={1063-5203},
     journal={Appl. Comput. Harmon. Anal.},
      volume={1},
      number={1},
       pages={100\ndash 115},
         url={https://doi.org/10.1006/acha.1993.1008},
      review={\MR{1256530}},
}

\bib{Enflo1972}{article}{
      author={Enflo, Per},
       title={Banach spaces which can be given an equivalent uniformly convex
  norm},
        date={1972},
        ISSN={0021-2172},
     journal={Israel J. Math.},
      volume={13},
       pages={281\ndash 288 (1973)},
         url={https://doi-org/10.1007/BF02762802},
      review={\MR{336297}},
}

\bib{Figiel1976}{article}{
      author={Figiel, Tadeusz},
       title={On the moduli of convexity and smoothness},
        date={1976},
        ISSN={0039-3223},
     journal={Studia Math.},
      volume={56},
      number={2},
       pages={121\ndash 155},
         url={https://doi.org/10.4064/sm-56-2-121-155},
      review={\MR{425581}},
}

\bib{Garling1969}{article}{
      author={Garling, D. J.~H.},
       title={A class of reflexive symmetric {BK}-spaces},
        date={1969},
        ISSN={0008-414X},
     journal={Canadian J. Math.},
      volume={21},
       pages={602\ndash 608},
         url={https://doi.org/10.4153/CJM-1969-068-0},
      review={\MR{410331}},
}

\bib{GHO2013}{article}{
      author={Garrig\'os, Gustavo},
      author={Hern\'{a}ndez, Eugenio},
      author={Oikhberg, Timur},
       title={Lebesgue-type inequalities for quasi-greedy bases},
        date={2013},
        ISSN={0176-4276},
     journal={Constr. Approx.},
      volume={38},
      number={3},
       pages={447\ndash 470},
         url={https://doi-org/10.1007/s00365-013-9209-z},
      review={\MR{3122278}},
}

\bib{GW2014}{article}{
      author={Garrig\'os, Gustavo},
      author={Wojtaszczyk, Przemys{\l}aw},
       title={Conditional quasi-greedy bases in {H}ilbert and {B}anach spaces},
        date={2014},
     journal={Indiana Univ. Math. J.},
      volume={63},
      number={4},
       pages={1017\ndash 1036},
}

\bib{GurGur1971}{article}{
      author={Gurari\u{\i}, Vladimir~I.},
      author={Gurari\u{\i}, N.~I.},
       title={Bases in uniformly convex and uniformly smooth Banach spaces (in Russian)},
        date={1971},
        ISSN={0373-2436},
     journal={Izv. Akad. Nauk SSSR Ser. Mat.},
      volume={35},
       pages={210\ndash 215},
        note={English translation in \emph{Bases in uniformly convex and
  uniformly flattened {B}anach spaces} Math. USSR Izv. \textbf{220} (1971) no.
  5},
      review={\MR{0283549}},
}

\bib{James1972}{article}{
      author={James, Robert~C.},
       title={Super-reflexive spaces with bases},
        date={1972},
        ISSN={0030-8730},
     journal={Pacific J. Math.},
      volume={41},
       pages={409\ndash 419},
         url={http://projecteuclid.org/euclid.pjm/1102968287},
      review={\MR{308752}},
}

\bib{KoTe1999}{article}{
      author={Konyagin, Sergei~V.},
      author={Temlyakov, Vladimir~N.},
       title={A remark on greedy approximation in {B}anach spaces},
        date={1999},
        ISSN={1310-6236},
     journal={East J. Approx.},
      volume={5},
      number={3},
       pages={365\ndash 379},
      review={\MR{1716087}},
}

\bib{LinTza1979}{book}{
      author={Lindenstrauss, Joram},
      author={Tzafriri, Lior},
       title={Classical {B}anach spaces. {II} -- Function spaces,}
      series={Ergebnisse der Mathematik und ihrer Grenzgebiete [Results in
  Mathematics and Related Areas]},
   publisher={Springer-Verlag, Berlin-New York},
        date={1979},
      volume={97},
        ISBN={3-540-08888-1},
      review={\MR{540367}},
}

\bib{Temlyakov1998}{article}{
      author={Temlyakov, Vladimir~N.},
       title={The best {$m$}-term approximation and greedy algorithms},
        date={1998},
        ISSN={1019-7168},
     journal={Adv. Comput. Math.},
      volume={8},
      number={3},
       pages={249\ndash 265},
         url={https://doi.org/10.1023/A:1018900431309},
      review={\MR{1628182}},
}

\bib{Temlyakov1998b}{article}{
      author={Temlyakov, Vladimir~N.},
       title={Greedy algorithm and {$m$}-term trigonometric approximation},
        date={1998},
        ISSN={0176-4276},
     journal={Constr. Approx.},
      volume={14},
      number={4},
       pages={569\ndash 587},
         url={https://doi.org/10.1007/s003659900090},
      review={\MR{1646563}},
}

\bib{Woj2000}{article}{
      author={Wojtaszczyk, Przemys{\l}aw},
       title={Greedy algorithm for general biorthogonal systems},
        date={2000},
        ISSN={0021-9045},
     journal={J. Approx. Theory},
      volume={107},
      number={2},
       pages={293\ndash 314},
         url={https://doi-org/10.1006/jath.2000.3512},
      review={\MR{1806955}},
}

\bib{Woj2003}{article}{
      author={Wojtaszczyk, Przemys{\l}aw},
       title={Projections and non-linear approximation in the space $\mathrm{BV}(\mathbb{R}^d)$},
        date={2003},
        ISSN={0024-6115},
     journal={Proc. London Math. Soc. (3)},
      volume={87},
      number={2},
       pages={471\ndash 497},
         url={https://doi.org/10.1112/S0024611503014084},
      review={\MR{1990936}},
}

\end{biblist}
\end{bibdiv}

\end{document}